\newcommand{\R}{\mathbb{R}}
\newcommand{\N}{\mathbb{N}}
\newcommand*\jb[1]{\left\langle #1 \right\rangle}
\newcommand*\abs[1]{\left| #1 \right|}
\newcommand{\tphi}{\tilde{\phi}}
\newcommand{\tpsi}{\tilde{\psi}}
\newcommand{\tH}{\widetilde{H}}
\newcommand{\Ga}{\Gamma}
\newcommand*\pr{\partial}
\newcommand*\eps{\varepsilon}
\numberwithin{equation}{section}
\newtheorem{thm}{Theorem}[section]
\newtheorem{lem}[thm]{Lemma}
\newtheorem{prop}[thm]{Proposition}
\theoremstyle{remark}
\newtheorem{rem}{Remark}[section]
\begin{document}

\title[Codimension one stability of the catenoid]{Codimension one stability of the catenoid under the
vanishing mean curvature flow in Minkowski space}
\author[R. Donninger]{Roland Donninger}
\address{\'{E}cole Polytechnique F\'{e}d\'{e}rale de Lausanne,
Switzerland}
\email{roland.donninger@epfl.ch}
\author[J. Krieger]{Joachim Krieger}
\address{\'{E}cole Polytechnique F\'{e}d\'{e}rale de Lausanne,
Switzerland}
\email{joachim.krieger@epfl.ch}
\author[J. Szeftel]{J\'er\'emie Szeftel}
\address{Laboratoire Jacques-Louis Lions, Universit\'{e} Pierre et Marie
Curie, Paris, France}
\email{jeremie.szeftel@upmc.fr}
\thanks{J. Szeftel is supported by advanced ERC grant BLOWDISOL}
\author[W. Wong]{Willie Wong}
\address{\'{E}cole Polytechnique F\'{e}d\'{e}rale de Lausanne,
Switzerland}
\email{wongwwy@member.ams.org}
\thanks{Support from the Swiss National Science Foundation for J.
Krieger and W. Wong is gratefully acknowledged.}
\thanks{The authors would like to thank Igor Rodnianski, Sergiu
Klainerman, Demetrios Christodoulou, Wilhelm Schlag, and the anonymous
referees for helpful comments on the manuscript.}

\subjclass[2010]{35L72, 35B40, 35B30, 35B35, 53A10}

\begin{abstract}
We study time-like hypersurfaces with vanishing mean curvature in the $(3+1)$ dimensional Minkowski
space, which are the hyperbolic counterparts to minimal embeddings of
Riemannian manifolds. The catenoid is a stationary solution of the associated Cauchy problem. 
This solution is linearly unstable, and we show that this instability is the only obstruction 
to the global nonlinear stability of the catenoid. More precisely, we
prove in a certain symmetry class 
the existence, in the neighborhood of the catenoid initial data, of a co-dimension 1 Lipschitz
manifold transverse to the unstable mode consisting of initial data
whose solutions exist globally in time and converge asymptotically to the catenoid. 
\end{abstract}

\maketitle

\tableofcontents


\section{Introduction}
\label{sec:introhm}

We study here \emph{extremal hypersurfaces} embedded
in the $(1+3)$-dimensional Minkowski space $\mathbb{R}^{1,3}$. More precisely, 
we consider for a three-dimensional smooth manifold $M$ the embeddings 
$\Phi:M \to \mathbb{R}^{1,3}$ such that $\Phi(M)$ has vanishing mean curvature, 
and such that the pull-back metric has Lorentzian signature. 
We will consider the associated \emph{Cauchy problem}.  
Given a two-dimensional smooth manifold $\Sigma$ and two maps  
$\Phi_0:\Sigma\to\mathbb{R}^3$ and $\Phi_1:\Sigma \to
\mathbb{R}^3$, we can ask for the existence and uniqueness of an
interval $I = (T_0,T_1)\ni 0$ and a map $\Phi:I\times \Sigma \to
\mathbb{R}^{1,3}$ such that $\Phi(I\times\Sigma)$ has vanishing mean
curvature, $\Phi: \{t\}\times\Sigma \to
\{t\}\times \mathbb{R}^3$, and the initial conditions 
$\Phi|_{\{0\}\times\Sigma} = (0,\Phi_0)$ and
$\partial_t\Phi|_{\{0\}\times\Sigma} = (1,\Phi_1)$ are satisfied.
Observe that with the knowledge of $\Phi_0,\Phi_1$ it is possible to
compute the pullback metric of $I\times \Sigma$ along
$\{0\}\times\Sigma$. As it turns out, as long as the pullback metric
is Lorentzian, the \emph{quasilinear} system of equations for the
extremal hypersurface is second order regularly hyperbolic
\cite{Chr00, Won11}, and local well-posedness for smooth initial data 
holds (see \cite{ACh79, KrLi}). It is then natural to consider the large time 
behavior of the flow. 

Note that not all solutions are global as there are known finite time 
blow up dynamics. Let us for example exhibit 
a large but compactly supported perturbation of the catenoid (see
following paragraphs; also Section \ref{sect:formulation}) initial data 
which becomes singular in finite time. If we start with the
initial data given by the standard infinite cylinder, that is
\[\Phi_0(\omega,y) = ( \cos\omega, \sin\omega, y),\,\,\Phi_1(\omega,y) = 0,\]
we see that the equations of motion
reduce to an ordinary differential equation in time for the radius 
$R$ of the cylinder
\[ R R'' = -1 + (R')^2 \]
which leads to the explicit solution
$$\Phi(t,\omega,y) = (t, \cos t\cos\omega, \cos t\sin\omega, y).$$
This solution is singular at time $t = \pi/2$, as $\mathrm{d}\Phi |_{t
= \pi/2}$ is no longer injective: the cylinder has collapsed to the
$x_3$ axis. (Similar collapse results under symmetry
assumptions has been studied by Aurilia-Christodoulou \cite{ACh79a}
and can also be derived more generally from the result of Nguyen-Tian
\cite{NgTi13}.) It is clear that such blow-ups do not depend on the
asymptotics, as $y\to \pm\infty$, of the initial data, due to the finite 
speed of propagation property of quasilinear wave equations. 
We can spatially localize the blow-up by taking initial data which 
smoothly glues a compact portion of the cylinder of length (measured 
in the $y$ direction) at least 2 to the catenoid (with a corresponding
compact region excised),  
analogously to the construction of compactly supported 
blow-up solutions to the focusing semilinear wave equation 
$\Box u = - |u|^{p-1} u$ from the ODE blow-up mechanism.

On the other hand, a particular class of initial data which admits \emph{global}
solutions are those for which $\Phi_0:\Sigma \to\mathbb{R}^3$ is the
embedding for a \emph{minimal surface}, and $\Phi_1 = 0$. It is easily checked that the map $\Phi(t,p) = (t,\Phi_0(p))$ embeds
$\mathbb{R}\times\Sigma$ into $\mathbb{R}^{1,3}$ with zero mean curvature, and 
 $\partial_t\Phi = (1,0,0,0)$ implies that the pullback metric is
Lorentzian. We consider in this paper the problem of \emph{stability} of these stationary solutions.  
The first explicit consideration of a problem of this sort is due to 
Brendle (in higher dimensions) \cite{Br} and Lindblad
\cite{Lin04}, building upon earlier works \cite{Kl, Chr86} on global
existence of general quasilinear wave equations. They consider small 
perturbations of the
stationary solution given by a flat hyperplane. One can then write the
solution as a graph over the stationary background, and reduce the
problem to the small data problem for a \emph{scalar} quasilinear 
wave equation satisfying both the quadratic \cite{Kl, Chr86} and cubic
\cite{Ali01, Ali01a} null
conditions.

In this paper we will consider the problem of stability for a non trivial
stationary background. Our work is in the spirit of recent studies of asymptotic stability of solitary waves for \emph{semilinear} 
wave equations (see for example \cite{KrS,BKrT,BecP12,NaSc11,NaSc12}; 
see also \cite{RoSt, RaRo, MeZa, HiRa12} for finite time blow up regimes which correspond to asymptotic 
stability in suitable rescaled variables), but in a \emph{quasilinear} setting. The
background solution we choose is the \emph{catenoid}, which is an
embedded minimal surface in $\mathbb{R}^3$, and is a surface of
revolution with topology $\mathbb{S}^1\times \mathbb{R}$. The induced
Riemannian metric on $\Sigma$ at a fixed time for this stationary
solution is asymptotically flat (with two ends). This fact is
important in our analysis. Indeed, as it is clear from the study by Brendle
and Lindblad, to prove any sort of global existence statement we need
to exploit the \emph{pointwise radiative decay} of solutions to the
\emph{linearized equation} on our background manifold. In \cite{Lin04}
the linearized equation is exactly the linear wave equation on
$\mathbb{R}^{1,2}$, and the pointwise decay utilized is the classical
one. In our case, the linearized equation is a geometric wave equation
on the curved background $\Sigma$ \emph{with a potential term}. 
The asymptotic flatness of $\Sigma$ thus plays an important 
role in establishing a decay mechanism. 

As mentioned above, a significant difference with the small data cases 
considered by Lindblad and Brendle is that
the linearized equation is no longer the linear wave
equation on the background manifold $\mathbb{R}\times\Sigma$; it 
also contains a potential term. In addition to introducing complications when
applying the vector-field method to obtain decay, the potential term
turns out to have the ``wrong sign''. That is to say, the linearized
equation admits an exponentially growing mode. As observed
by Krieger-Lindblad \cite{KrLi}, if one isolates the perturbation away
from the ``collar region'' (see Figure \ref{fig1}), one can verify that the solution exists ``up
to the time when the collar begins to move'' (due to finite speed of
propagation). One should interpret this restriction as when the
exponentially growing mode (which is very small initially) \emph{overtakes}
the radiating parts of the perturbation in size. In view of this exponentially growing mode, we cannot obtain stability
for arbitrary perturbations. Similar to the
analysis of Krieger-Schlag \cite{KrS} for the semilinear wave
equation, we will prove (for a more precise statement, see Section
\ref{sec:statement})
\begin{thm}[Codimension one stability of the catenoid, version 1]
Let $g_d$ be the unstable mode for the linear evolution. 
For any sufficiently small initial perturbation $(\Phi_0, \Phi_1)$ 
of catenoid initial
data, we can find a small number $\alpha$ such that solution generated
by the initial data $(\Phi_0 + \alpha g_d, \Phi_1)$ exists for all
future time and decays asymptotically to the catenoid. 
\end{thm}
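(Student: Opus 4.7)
The plan is to parametrise perturbations of the catenoid as normal graphs, reduce the problem to a single scalar quasilinear wave equation on the stationary background $\mathbb{R}\times\Sigma$, and then run a spectrally adapted shooting argument in the spirit of Krieger--Schlag \cite{KrS}. Writing $\Phi = \Phi_{\mathrm{cat}} + \phi\,\nu$ within the imposed rotationally symmetric class, where $\nu$ is the spacelike unit normal to the catenoid's worldsheet, the vanishing mean curvature condition reduces schematically to
\[
\Box_g \phi - V\phi = \mathcal{N}(\phi,\pr\phi,\pr^2\phi),
\]
with $g$ the stationary Lorentzian product metric on $\mathbb{R}\times\Sigma$, $V$ the Jacobi potential generated by the second fundamental form of the catenoid, and $\mathcal{N}$ a nonlinearity at least quadratic in its arguments, which is expected to inherit a null structure analogous to Lindblad's \cite{Lin04}. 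The coefficients of $\Box_g$ themselves depend on $\phi$ through the quasilinear top order, and this dependence must be tracked carefully throughout.

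Next, the stationary spatial operator $L := -\Delta_\Sigma + V$ should have exactly one negative eigenvalue $-\lambda^2$ with eigenfunction $g_d$, purely absolutely continuous spectrum on $[0,\infty)$, and (one must verify) no threshold resonance. One decomposes $\phi = \alpha(t)\, g_d + \psi$ with $\psi$ in the spectral complement of $g_d$, so that the linearised flow becomes $\ddot\alpha = \lambda^2\alpha$ on the bound state and $\pr_t^2\psi + L\psi = 0$ on the stable subspace, coupled nonlinearly through the two spectral projections of $\mathcal{N}$.

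The core analytic step is a robust decay estimate for $\psi$. Exploiting the $\mathbb{S}^1$ rotational symmetry and the two asymptotically flat ends, I would build commuting vector fields adapted to the catenoid geometry (rotation, asymptotic scaling and boost-like multipliers in each end), together with a Morawetz multiplier supported away from the collar, and prove an integrated local energy decay estimate for $\psi$ that is robust against the short-range potential $V$ thanks to the spectral orthogonality to $g_d$. Upgraded by commutator arguments, this yields pointwise decay of the form $|\psi| + |\pr\psi| \lesssim \jb{t}^{-1}$ in a suitably weighted norm, strong enough to close a bootstrap iteration for $\psi$, provided the quasilinear top-order contributions respect the null structure (including its cubic refinement) as in \cite{Lin04}.

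Finally the codimension-one manifold is produced by shooting in the parameter $\alpha$. For fixed small stable data $(\psi_0,\psi_1)$, let $\alpha$ range over a small interval, and let $T_*(\alpha)$ be the first time at which the unstable coefficient $\alpha(t)$ in the nonlinear evolution exceeds a prescribed threshold. Continuity in $\alpha$, together with the two possible exit signs of $\alpha(t)$, gives by a standard topological/connectedness argument a unique intermediate value $\alpha_* = \alpha_*(\psi_0,\psi_1)$ for which the unstable coefficient remains bounded for all time; the decay estimates for $\psi$ then force $\alpha(t)\to 0$, so that the resulting graph is global and asymptotes to the catenoid. Lipschitz dependence of $\alpha_*$ on $(\psi_0,\psi_1)$ follows from contraction estimates on the iteration map. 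The principal obstacle is the third step: producing a potential-robust, sharp pointwise decay rate on the curved, two-ended background $\mathbb{R}\times\Sigma$ strong enough to close the quasilinear iteration, while simultaneously maintaining compatibility with the spectral decomposition underlying the shooting argument. Neither vector-field nor purely spectral methods apply off the shelf, and the bulk of the technical work consists in delicately combining a Morawetz-type integrated local energy decay estimate with an Alinhac-type null-structure analysis of the top-order nonlinearity.
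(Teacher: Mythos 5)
Your overall skeleton coincides with the paper's: reduction to a scalar quasilinear wave equation with potential for the normal graph $\phi$, spectral splitting $\phi = \alpha(t)g_d + \psi$ with $P_c\psi=\psi$, a bootstrap for the continuous-spectral part based on dispersive decay, and a soft topological shooting argument in the unstable coefficient to select $\alpha$ (the paper's Lemma 7.1 is exactly your connectedness argument; note it yields existence and Lipschitz dependence but no uniqueness claim, which you assert without justification). The genuine gap lies in your third step, which is the heart of the matter. First, the decay rate you posit is not available: after the rotational reduction the problem is effectively a $1+1$-dimensional wave equation whose solutions behave like $2+1$-dimensional free waves, so the unweighted pointwise decay is only $\jb{t}^{-1/2}$; the rate $\jb{t}^{-1}$ holds only with a spatial weight $\jb{y}^{-1}$ (this is precisely the content of Proposition 2.1, $\sigma\in[\frac12,1]$). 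Closing the quadratic terms at the true rate $t^{-1/2}$ is only possible because every quadratic term in \eqref{eqs:mainQuasi}--\eqref{eqs:mainSemi} carries weights $\jb{y}^{-k}$ (spatial localization), and the unweighted cubic term must be handled through the null structure together with boost/scaling vector fields; your bootstrap as stated would not close at $t^{-1/2}$ without exploiting this localization explicitly. Second, your plan to ``build commuting vector fields adapted to the catenoid geometry'' is exactly the obstruction the paper has to circumvent: the boost $\Gamma_1=t\partial_y+y\partial_t$ and scaling $\Gamma_2=t\partial_t+y\partial_y$ do \emph{not} commute with $\mathcal{L}=-\partial_y^2-\frac{6+y^2}{4\jb{y}^4}$, and no substitute commuting fields are exhibited. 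The paper's replacement is the distorted Fourier transform machinery of Donninger--Krieger, which proves energy, weighted-$\Gamma_2$, local energy decay and $t^{-\sigma}$ dispersive bounds directly for $e^{it\sqrt{\mathcal{L}}}P_c$, and then recovers $\Gamma_1$ control a posteriori from the equation (Lemmas 4.1 and 4.2); a classical Morawetz-plus-commutator scheme is not known to deliver the weighted $t^{-1}$ bounds with $\Gamma_2$ derivatives that the nonlinear argument needs.

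Two further ingredients missing from your outline are not cosmetic. The quasilinear top-order derivatives cannot be estimated by Duhamel against the linear propagator (derivative loss); the paper handles them by direct integration by parts in the energy identity and, for the top-order local energy decay, by an approximate parametrix built from the characteristics of the perturbed principal symbol (Section 5, Lemmas 5.1--5.2). And the shooting argument requires quantitative decay of the unstable coefficient, $|h(t)|\lesssim \jb{t}^{-1-2\delta_1}$ together with bounds on $(t\partial_t)^\kappa h$, which the paper extracts from the Duhamel formula for $h$ (Lemma 3.1) by integrating by parts against $e^{\pm k_d s}$ and using the $\Gamma_2$ structure of the source; merely keeping $\alpha(t)$ ``bounded'' as in your sketch is not enough to close the bootstrap for $\psi$, since $h$ enters the source terms of the $\psi$ equation.
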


\begin{rem}
While here we explicitly consider the
case of embedding a hypersurface in $\mathbb{R}^{1,3}$, the method
should easily carry over to the case where the ambient Minkowski
space has higher dimensions, as pointwise decay estimates (Section
\ref{sect:linear}) improve in higher dimensions, making the
nonlinear analysis (Section \ref{subsec:nonlin}) simpler. Furthermore
the spectral properties of the linearized operators (Section
\ref{subsec:lin}) are qualitatively the same independently of the dimension.
\end{rem}

This paper is organized as follows. In Section \ref{sect:mainres} we
introduce the equation which we will study, discuss some of its main
features, describe the linear theory, and state our main theorem. In Section \ref{sec:boot} we
describe the bootstrap argument which will be used to prove our main
theorem. In Sections \ref{sec:energybounds} through
\ref{sec:dispersive}, we improve on our bootstrap assumptions under the 
assumption that the projection of our solution on the unstable mode is
under control. In Section \ref{sec:existencea} we improve our control
of the unstable mode. Finally, we prove our main theorem in Section \ref{sec:proofmaintheorem}. 

\begin{figure}[t]
\centering
\includegraphics[width=\textwidth]{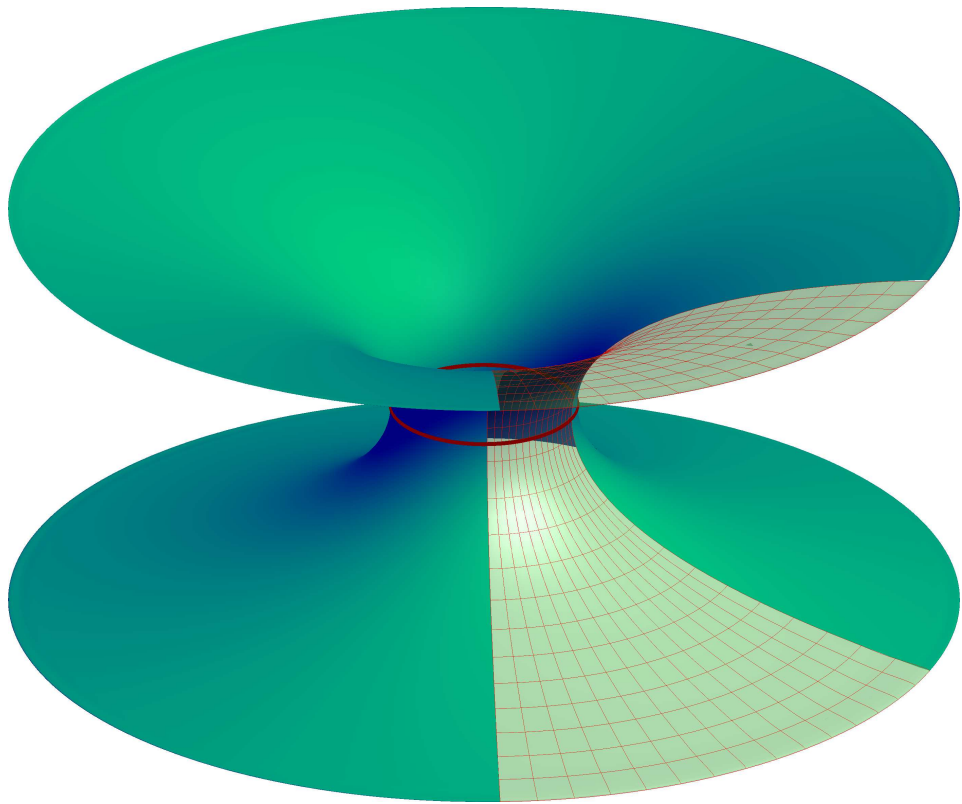}
\caption{The catenoid surface with the ``collar'' (thinnest part of
the surface of revolution) marked out. In the transparent portion we
can see the level sets of the angular coordinate $\omega$ as well as the
``radial'' coordinate $y$.}
\label{fig1}
\end{figure}


\section{Main Results}\label{sect:mainres}


\subsection{Formulation of the problem}\label{sect:formulation}


As mentioned above, we consider perturbations of the stationary
catenoid solution to the extremal surface equation. 
The catenoid as a surface of revolution can be parametrized by (see
also Figure \ref{fig1})
\begin{equation}\label{eq:stdcat}
\mathbb{R}\times\mathbb{S}^1 \ni (y,\omega) \mapsto \left( r = \sqrt{1
+ y^2}, z = \sinh^{-1} y, \theta = \omega\right) \in \mathbb{R}_+ \times
\mathbb{R}\times \mathbb{S}^1, 
\end{equation}
where we use the standard cylindrical coordinates system on $\mathbb{R}^3$. 
Throughout we use the notation $\jb{y} = \sqrt{1+y^2}$. 
The parametrization here exposes the catenoid, a surface of
revolution, as a warped product manifold with base $\mathbb{R}$ and fibre 
$\mathbb{S}$; the coordinate $y$ is chosen to be orthogonal to the fibers 
and to have unit length (note that the parametrization is ``by 
arc length'' if we ``mod'' out the rotational degree of freedom). In 
 this coordinate system we see that the
induced Riemannian metric on the catenoid has the line element
\[ \mathrm{d}y^2 + \jb{y}^2 \mathrm{d}\omega^2~,\]
and that $\jb{y}/ \abs{y} \to 1$ as $y\to \pm\infty$ captures the
asymptotic flatness of this manifold. 

In addition to the rotational symmetry, the catenoid
also has a reflection symmetry about the plane $z = 0$; in terms of
the intrinsic coordinates, this is the mapping $y\mapsto -y$. \emph{For
simplicity, we will consider only perturbations that preserve both 
symmetries.} More precisely, we will consider the case where the
perturbed solution is still, at any instance of time, a surface of
revolution that is symmetric about the plane $z = 0$. Note that 
since the induced Riemannian metric on $\Sigma$ is asymptotically flat with two ends, 
 the Hamiltonian flow on $\mathbb{R}\times \Sigma$ using 
the pullback metric exhibits trapping, which is manifest in the closed geodesic at the
``collar'' of $\Sigma$ (see Figure \ref{fig1}). The rotational symmetry reduces our scenario to
the ``zero angular momentum case'', and hence issues associated with
the trapping of the geodesic flow do not appear in our analysis. A
treatment of the full problem, without rotational symmetry, will require 
a modification of some parts of our proof which rely explicitly on the 1+1 reduction of the 
problem in rotational symmetry, as well as a detailed study of the trapping phenomenon, 
which usually induces a loss of derivatives. 
On the other hand, the reflection symmetry is only used to simplify 
the analysis by effectively fixing the centre of mass; we do not 
expect there to be obstructions in removing this assumption given
finite speed of propagation for nonlinear wave equations. 

\begin{figure}[t]
\centering
\includegraphics[width=0.9\textwidth]{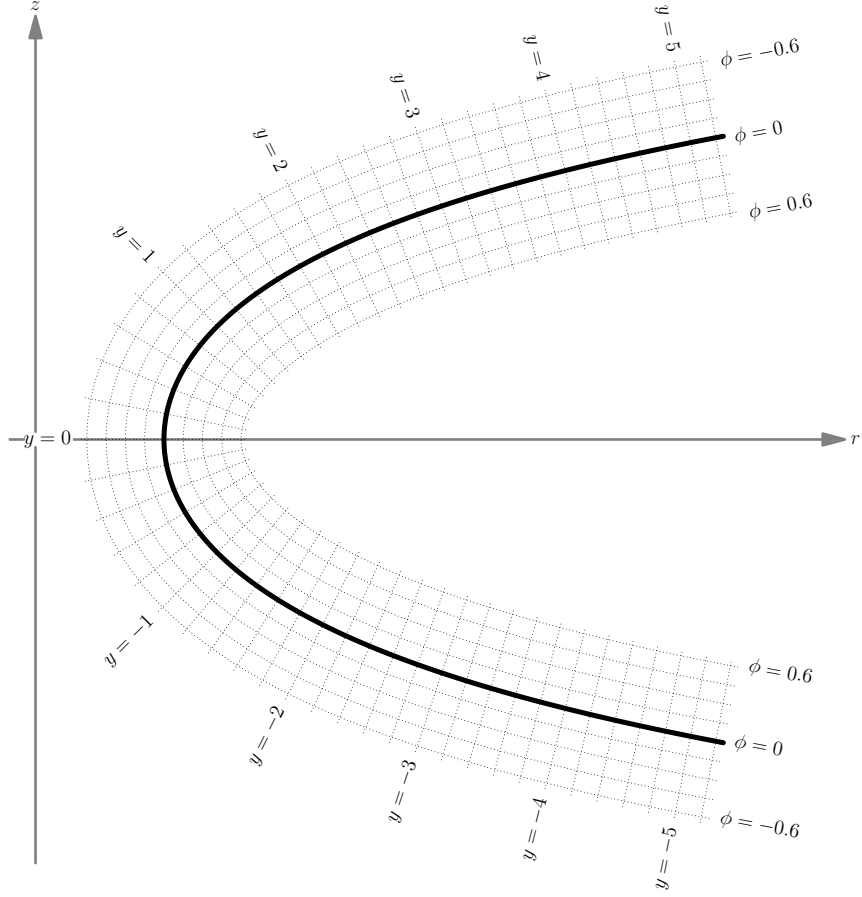}
\caption{The local coordinate system induced by the normal bundle of
the catenoid. We show here the cross section for a fixed $\omega$.}
\label{fig2}
\end{figure}

Given the geometric nature of our problem, there are many different
ways of \emph{parametrizing} our solution manifold $M$ (or
equivalently, fixing the time parameter $t$, parametrizing the time
slices). To cast the problem as a concrete system of partial
differential equations requires choosing a \emph{gauge} (in other
words, fixing a preferred parametrization; this problem is typical 
for geometric equations such as the Ricci flow
or Einstein equations). Given the assumed
symmetries one may be tempted into a \emph{geometric} gauge choice via
intrinsic quantities: for example, the rotational symmetry means that
naturally $\omega$ is a good candidate coordinate, and we may want to
choose the other coordinate $y$ of $\Sigma$ to be orthogonal to $\omega$
and of unit length, similar to our parametrization of the catenoid.
This choice turns out to be not suitable for studying the stability
problem as the equation for the \emph{difference} between our
perturbed solution and the stationary catenoid becomes a
complicated equation for a vector-valued function with a compatibility
constraint (coming from the ``unit-length'' requirement). By using the
compatibility constraint one can convert this to a scalar
\emph{non-local} integro-differential equation. 

Since we are interested in the stability problem in the rotationally
symmetric case, instead we will
consider our perturbed solution as a \emph{graph} over the catenoid.
More precisely, using that the outward-pointing unit normal vector
field to the
catenoid is $\jb{y}^{-1}(\partial_r - y \partial_z)$, there is a
natural smooth \emph{surjection} from the 
normal bundle of the catenoid to $\mathbb{R}^3$, given by (see
Figure \ref{fig2})
\begin{equation}\label{eq:coordchoice}
(y,\omega,\phi)\mapsto \left( r = \jb{y} + \frac{\phi}{\jb{y}}, z =
\sinh^{-1} y - \frac{y\phi}{\jb{y}}, \theta = \omega\right) ~.
\end{equation}
By considering the radius of curvature for the constant $\omega$ level
curves, we see that restricted to $\abs\phi < \jb{y}^2$ this mapping is regular
and injective. Since we 
are interested in perturbations of the $\phi =
0$ level set, we make the assumption that our perturbed solution can
be written as a graph over $\{\phi = 0\}$ in this coordinate system.
That is to say, we will study the \emph{small data problem} for 
$\phi = \phi(t,y)$. Note that our assumption of reflection symmetry
implies that $\phi$ will be an even function in $y$, and the lack of
$\omega$ dependence indicates that the graph is a surface of
revolution. 

Under this parametrization, we can derive the equation of motion for
the extremal surface by formally writing down the Euler-Lagrange
equations for the Lagrangian given by the induced volume form on the
graph associated to $\phi(t,y)$; this computation is carried out in
Appendix \ref{app:reduction}. We find that the equation of motion can
be written as a quasilinear wave equation \emph{with potential} for
$\phi$ in the coordinates $t,y$:
\begin{equation}\label{eq:Main}
-\partial_{tt}^2\phi + \partial_{yy}^2\phi +
\frac{y}{\jb{y}^2}\partial_y\phi + \frac{2}{\jb{y}^4}\phi 
= Q_2 + Q_3 + Q_4 + S_2 + S_3 + S_4, 
\end{equation}
where the quasilinear terms $Q_{*}$ and semilinear terms $S_{*}$ are
split into those quadratic, cubic, and quartic-or-more in $\phi$ and
its derivatives:
\begin{subequations}\label{eqs:mainQuasi}
\begin{align}
Q_2 &= - \frac{2\phi}{\jb{y}^2} \partial^2_{tt} \phi,\\
Q_3 &= \frac{\phi^2}{\jb{y}^4} \partial^2_{yy}\phi +
(\partial_t\phi)^2 \partial^2_{yy}\phi - 2 \partial_t \phi \partial_y
\phi \partial^2_{ty}\phi + (\partial_y\phi)^2 \partial^2_{tt}\phi,\\
Q_{4} &= \frac{\phi^2}{\jb{y}^4} \left[\left( \frac{2\phi}{\jb{y}^2} -
\frac{\phi^2}{\jb{y}^4} - (\partial_y\phi)^2\right)
\partial^2_{tt}\phi + 2 \partial_y\phi \partial_t\phi
\partial^2_{ty}\phi - (\partial_t\phi)^2 \partial^2_{yy}\phi \right],
\end{align}
\end{subequations}
and
\begin{subequations}\label{eqs:mainSemi}
\begin{align}
S_2 &= \frac{4\phi^2}{\jb{y}^6} + \frac{4y\phi \partial_y
\phi}{\jb{y}^4} - \frac{(\partial_y\phi)^2}{\jb{y}^2}, \\
S_3 &= \frac{y\phi^2}{\jb{y}^6} \partial_y \phi - \frac{2\phi^3}{\jb{y}^8} - \left(
\frac{3\phi}{\jb{y}^4} + \frac{y
\partial_y\phi}{\jb{y}^2}\right)(\partial_y\phi)^2  + \left(
\frac{2\phi}{\jb{y}^4} + \frac{y
\partial_y\phi}{\jb{y}^2}\right)(\partial_t\phi)^2,
\\
S_{4} &= - \left(\frac{4y\phi}{\jb{y}^4} + \frac{y\phi^2}{\jb{y}^6}\right) \partial_y \phi (\partial_t\phi)^2
-\left(\frac{4\phi^2}{\jb{y}^6} -
\frac{2\phi^3}{\jb{y}^8}\right) (\partial_t\phi)^2.
\end{align}
\end{subequations}
We denote by $F$ this nonlinearity 
\begin{equation}
F(y,\phi,\nabla\phi,\nabla^2\phi) = Q_2 + Q_3 + Q_4 + S_2 + S_3 +
S_4~.
\end{equation}


\subsection{A first look at the structure of the equation}
\label{sec:firstlook}

Let us point out some of the main features of the  
equations \eqref{eq:Main}, \eqref{eqs:mainQuasi}, and 
\eqref{eqs:mainSemi}. That our argument can control the nonlinear
terms using pointwise decay estimates is largely due to two special 
structures: the terms are either
\emph{localized} or they exhibit a \emph{null condition}. We comment on 
these structures in Section \ref{subsec:nonlin}. The linear evolution 
introduces additional difficulties as there is an exponentially
growing mode: the pointwise decay estimates we need can only be expected away
from the growing mode. This is discussed in Section \ref{subsec:lin}. 

\subsubsection{Nonlinearities}\label{subsec:nonlin}

The reason that we
separated the quadratic, cubic, and quartic-and-higher nonlinearities
is that we intend to make use of the \emph{radiative decay} effects of the
wave equation on a (2+1) dimensional, asymptotically flat space-time
to gain decay in the ``wave zone'', the region where $y$ and $t$ are
comparable. The experience with small-data, quasilinear wave equations
on $\mathbb{R}^{1,2}$, see \cite{Lin04,Ali01,Ali01a}, indicates that the most dangerous terms are
those which are quadratic and cubic in the nonlinearities, due
to the expected linear decay rate of $1/\sqrt{t}$ 
for wave equations in 2 spatial dimensions (see also Section
\ref{sect:linear}).  

On the other hand, in \eqref{eqs:mainQuasi} and
\eqref{eqs:mainSemi}, almost all the nonlinear terms, in particular
all the \emph{quadratic ones}, gain an
additional boost in decay from the
coefficients of the form $\jb{y}^{-k}$ --- in the wave zone this term
contributes a decay rate of $t^{-k}$ which vastly improves the
situation. The term $Q_2$, for example, has the form
$O(t^{-5/2})\cdot \partial^2_{tt}\phi$ with a coefficient which is much better
than the integrability threshold of $O(t^{-1})$, if we assume an
expected linear decay rate. As we shall see in the analysis, this
\emph{localization} of some of the most dangerous nonlinearities plays
a crucial role in allowing us to close our decay estimates. 

The only exception to this boost in decay occurs in
the term $Q_3$: there we have a non-linearity of the form 
\begin{equation}\label{eq:potdanger} 
(\partial_t\phi)^2 \partial^2_{yy}\phi - 2 \partial_t\phi
\partial_y\phi \partial^2_{ty}\phi + (\partial_y\phi)^2
\partial^2_{tt}\phi~
\end{equation}
which is unweighted. However, as was observed in \cite{Lin04} for the
perturbation of the trivial solution, this term carries a \emph{null
structure}. One can see this purely at an algebraic level: in terms of
the \emph{background null coordinates} $u = t + y$ and $v = t-y$,
the nonlinearity takes the form 
\[ 4(\partial_v\phi)^2 \partial^2_{uu}\phi + 4(\partial_u\phi)^2
\partial^2_{vv} \phi - 8 \partial_u\phi\partial_v \phi \partial^2_{uv}
\phi \]
and hence verifies the \emph{cubic, quasilinear null condition} \cite{Ali01}. 
The null condition exhibits in particular 
a hidden divergence/gradient structure: in the context of 
elliptic theory it appears in the proof of Wente's inequality 
 \cite{Wen69}; and in the context of wave equations it drives
the null form estimates of Klainerman and Machedon \cite{KlMa94}. For
our explicit nonlinearity above, one can check easily 
that the following identity holds 
\begin{multline*}
(\partial_t\phi)^2 \partial^2_{yy}\phi  - 2\partial_t\phi \partial_y
\phi \partial^2_{ty}\phi  + (\partial_y\phi)^2 \partial^2_{tt}\phi\\
= \partial_t\big[(\partial_t\phi)^3\big] -
2\partial_y\big[\partial_y\phi(\partial_t\phi)^2\big] +
\partial_t\big[(\partial_y\phi)^2\partial_t\phi\big] +
3(\partial_t\phi)^2(\partial^2_{yy}\phi - \partial^2_{tt}\phi).
\end{multline*}
The first three terms of the right-hand side exhibit the hidden divergence
structure, while for the last term, we may replace $-\partial^2_{tt}\phi+\partial^2_{yy}\phi$ 
using our original equation \eqref{eq:Main} and hence obtain terms which are cubic \emph{with
sufficient weights} together with quartic and higher terms which have better decay properties. 

\subsubsection{Linear spectral analysis}\label{subsec:lin}

Having described the difficulties that arise from the ``right hand
side'' of \eqref{eq:Main}, we turn our attention to the ``left hand
side''. The linear operator 
\begin{equation} \label{eq:linearwaveop}
- \partial^2_{tt} \phi + \partial^2_{yy}\phi + \frac{y}{\jb{y}^2}
\partial_y \phi \end{equation}
is in fact the coordinate-invariant wave operator $\Box_M \phi$ on the
background $\mathbb{R}\times \Sigma$. Indeed, the induced Lorentzian
metric on the stationary catenoid solution, as an embedded
hypersurface of $\mathbb{R}^{1,3}$, is
\[ - \mathrm{d}t^2 + \mathrm{d}y^2 + \jb{y}^2 \mathrm{d}\omega^2~, \]
and its corresponding Laplace-Beltrami operator can be computed to be
exactly \eqref{eq:linearwaveop}. However, since we are considering the
perturbation of a non trivial solution, there is also a lower
order correction term generated by the linearization, namely 
the potential term $2\jb{y}^{-4} \phi$ on the left hand side of \eqref{eq:Main}. 
Note that the coefficient $2 \jb{y}^{-4}$ has a
positive sign, which indicates that it is an \emph{attractive}
potential, and opens up the possibility of the existence of a negative
energy ground state. This is related to the variational instability of
the catenoid as a minimal surface \cite{FiSc80}: indeed, geometrically
we can write the linear operator on the left hand side of
\eqref{eq:Main} as 
\[ \Box_g - \mathrm{Scalar} \]
where $\Box_g$ is the Laplace-Beltrami operator for the induced
Lorentzian metric on the static catenoid solution, and
$\mathrm{Scalar}$ is the induced scalar curvature. This operator is
formally the second variation of the extremal surface Lagrangian, and
in the Riemannian case is precisely the \emph{stability operator} for
minimal hypersurfaces embedded in Euclidean spaces. The positivity of
the potential term and the linear instability is then seen as a
consequence of minimal surfaces in $\mathbb{R}^3$ necessarily having
\emph{negative} Gaussian curvature. Any corresponding 
eigenfunction of the linearized operator
will generate either non-decaying or exponentially growing modes; 
clearly this will complicate our estimates based on expectation of linear 
pointwise decay. 

Now, the natural space on which to study our linear operator is the
$L^2$ space adapted to the geometry; that is to say, we should be
looking at $L^2(\Sigma)$ where $\Sigma$ is the catenoid. In the
intrinsic coordinates $(y,\omega)$ this is $L^2( \jb{y}
\mathrm{d}y\,\mathrm{d}\omega)$. Since we are working with rotationally
symmetric functions, we find it convenient to absorb the weight
$\jb{y}$ onto the function $\phi$ instead, and work with
$L^2(\mathrm{d}y)$. In other words we introduce the notation
\[
\tilde{\phi}: = \jb{y}^{\frac{1}{2}}\phi
\]
and we obtain in place of \eqref{eq:Main} the following equation:
\begin{equation}\label{eq:Maintilde}
-\partial_t^2\tilde{\phi} + \partial_y^2\tilde{\phi} +
\frac{6 + y^2}{4\jb{y}^4}\tilde{\phi} = \jb{y}^{\frac{1}{2}}F(\phi, \nabla\phi, \nabla^2\phi).
\end{equation}
Thus, we are now working with the standard
$L^2(\mathrm{d}y)$ space and on this space the relevant linear
operator
\begin{equation}\label{eq:LinOp}
\mathcal{L}: = -\partial_y^2 - \frac{6+y^2}{4\jb{y}^4}
\end{equation}
is a short-range perturbation of the Laplacian. Since the potential
term is a bounded multiplier which decays to $0$ as $\abs{y}\to
\infty$, the operator $\mathcal{L}$ is
self-adjoint on $L^2(\mathrm{d}y)$ with domain $\{\partial^2_{yy} f
\in L^2(\mathrm{d}y)\}$, and its essential spectrum is exactly
$[0,\infty)$ (this result is classical, see e.g.\ \cite[Sections 13.1
and 14.3]{HiSi96}). Due to the $O(\jb{y}^{-2})$ decay of the potential
term, the solutions to the ordinary differential equation $(\mathcal{L} -
\lambda)\eta_\lambda = 0$ for $\lambda > 0$ are 
given by the Jost solutions \cite[Theorem XI.57]{ReSiIII}, and hence
there are no $L^2$ eigenfunctions with positive eigenvalue. 

In the case $\lambda = 0$, the equation $\mathcal{L} \eta_0 = 0$ can
be solved explicitly: this is simply due to the fact that
$\mathcal{L}$ is the natural \emph{linearized} operator for the
minimal surface embedding problem, and that after fixing rotational
symmetry, the catenoid solutions form a two parameter family due to
the freedoms for scaling and translating (along the axis).  To be more
precise, the standard catenoid we choose in \eqref{eq:stdcat} is the element of the family 
\begin{equation}\label{eq:catfam}
(y,\omega) \mapsto \left(r = a \jb{a^{-1}y}, z = b + a \sinh^{-1}(a^{-1}y),
\theta = \omega\right)~,
\end{equation}
parametrized by $(a,b)\in \mathbb{R}_+\times \mathbb{R}$, with $a = 1$
and $b = 0$. The two linearly independent solutions to
$\mathcal{L}\eta_0 = 0$ correspond to infinitesimal motions in $a$ and
$b$ of the above. From this consideration it is clear that the
movement in $b$ corresponds to an \emph{odd} solution (and so ruled
out by our symmetry assumptions) with a unique
root at $y = 0$, while movement in $a$ corresponds to an \emph{even}
solution with two roots. We can easily obtain the explicit form of
these two solutions by formally taking derivatives relative to $a,b$
after expressing \eqref{eq:catfam} in the coordinates
\eqref{eq:coordchoice}. This yields
\begin{equation}\label{eq:zeroeigen} 
\eta_0 = \jb{y}^{\frac12} \cdot \begin{cases}
\frac{y}{\jb{y}} \sinh^{-1} y  - 1 & \text{(scaling symmetry in } a),\\
\frac{y}{\jb{y}} & \text{(translation symmetry in } b).
\end{cases} \end{equation}
One sees easily from the asymptotic behavior that neither of these
functions belong to $L^2(\mathrm{d}y)$. 

\begin{rem}
The fact that the solutions $\eta_0$ do not belong to $L^2(\mathrm{d}y)$ implies
that we do not have to modulate. In other words, the individual
elements of our two parameter family \eqref{eq:catfam} are
``infinitely far'' from one another (this can be seen from their
asymptotic behavior) and we do not need to track the
``motion along the soliton manifold'' for our analysis. 
\end{rem}

We lastly consider the
possible discrete spectrum below 0. By testing with bump functions we
easily see that there must be a negative eigenvalue. By the
Sturm-Picone comparison theorem \cite[Section 10.6]{BiRo89} and the
explicit solutions \eqref{eq:zeroeigen} above, we see
that the eigenvalue is unique, and its eigenfunction is nowhere
vanishing (it is the ground state). We call this eigenfunction
$g_d(y)$ and its associated eigenvalue $-k_d^2$. (Numerically 
$\sim -0.5857$.)
Note that $g_d$ is smooth, and decays exponentially as
$\abs{y}\to\infty$. 

In the sequel we let $P_d$ denote the projection onto the ground
state $g_d$, and $P_c$ the projection onto the continuous spectrum.
Noting that $g_d$ contributes an exponentially growing mode to the
linear evolution, we cannot expect to have stability for any 
perturbation. Instead, we will show that given a sufficiently small
initial perturbation $\tilde{\phi}$, we can adjust its projection to
the ground state $P_d\tilde{\phi}$ while keeping $P_c\tilde{\phi}$
unchanged so as to guarantee global existence and asymptotic 
vanishing of the solution. In the analysis we will treat the continuous part and the discrete part of
the spectrum separately. We will describe the linear decay 
estimates for the continuous part of the solution in Section
\ref{sect:linear}. This will be combined with the analysis of the nonlinear
terms (in the spirit of Section \ref{subsec:nonlin}) to derive \emph{a
priori estimates} assuming that the discrete part of the solution is
well behaved. Finally we will close the argument in Section
\ref{sec:existencea} by showing that such a good choice of initial
$P_d\tilde{\phi}$ is possible. 


\subsection{Energy bounds and pointwise decay estimates for
$\mathcal{L}$}\label{sect:linear} 


For the sequel, we shall use the following key energy and decay
estimates associated with the evolution of the operator $\mathcal{L}$,
which are proved in \cite{DoKr}. Recall that we take $P_c =
1 - P_d$ to be the projection to the continuous part of the spectrum
of $\mathcal{L}$. In the sequel, we shall frequently use the notations (as well as variations thereof)
\[
\|\langle\nabla_{t,y}\rangle^{\alpha}\psi\|_{S},\,\,\|\langle\nabla_{t,y}\rangle^{\alpha}\langle \Gamma\rangle^{\kappa}\psi\|_{S}
\]
for various norms $\|\cdot\|_S$. By these expressions we shall understand the quantities 
\[
\sum_{0\leq|\beta|\leq \alpha}\|\nabla_{t,y}^{\beta}\psi\|_{S},\,\,\,\sum_{0\leq|\beta|\leq \alpha}\sum_{0\leq\tilde{\kappa}\leq\kappa}\|\nabla_{t,y}^{\beta}\Gamma^{\tilde{\kappa}}\psi\|_{S}.
\]
Here $\Gamma$ stands for either one of the vector fields $\Gamma_1 = t\partial_y + y\partial_t$, $\Gamma_2 = t\partial_t + y\partial_y$. 
\begin{prop}\label{eq:keyest1}For any multi-index $\alpha = (\alpha_1, \alpha_2)\in \N_{\geq 0}^2$, we have 
\begin{equation}\label{eq:plainenergy1}
\|\nabla_{t,y}^{\alpha}P_c e^{it\sqrt{\mathcal{L}}}f\|_{L^2_{dy}}\lesssim \|\langle\partial_y\rangle^{|\alpha|}f\|_{L^2_{dy}}
\end{equation}
with constant depending on $|\alpha| = \alpha_1 + \alpha_2$. Moreover, denoting the scaling vector field 
\[
\Gamma_2: = t\partial_t + y\partial_y, 
\]
we have for any $\alpha\in \N_{\geq 0}$, $\beta\in \N_{\geq 0}^2$ the weighted energy bounds 
\begin{equation}\label{eq:weightedenergy1}
\|\nabla_{t,y}^{\beta}\Gamma_2^{\alpha}P_c e^{it\sqrt{\mathcal{L}}}f\|_{L^2_{dy}}\lesssim \|\langle y\rangle^{\alpha}\langle\partial_y\rangle^{|\beta|+\alpha}f\|_{L^2_{dy}}.
\end{equation}
For the sine evolution, we have the following bounds for $|\alpha|\geq
1$: for any $\eps_* \geq 0$ 
\begin{equation}\label{eq:plainenergy2}
\|\nabla_{t,y}^{\alpha}P_c \frac{\sin(t\sqrt{\mathcal{L}})}{\sqrt{\mathcal{L}}}f\|_{L^2_{dy}}\lesssim \|\langle\partial_y\rangle^{|\alpha|-1}f\|_{L^2_{dy}} + \|f\|_{L^1_{\langle y\rangle^{\eps_*}dy}}
\end{equation}
as well as 
\begin{equation}\label{eq:weightedenergy2}
\|\nabla_{t,y}^{\alpha}\Gamma_2^{\kappa}P_c \frac{\sin(t\sqrt{\mathcal{L}})}{\sqrt{\mathcal{L}}}f\|_{L^2_{dy}}\lesssim \|\langle\partial_y\rangle^{|\alpha|-1}\langle \Gamma_2\rangle^{\kappa}f\|_{L^2_{dy}} + \|\langle \Gamma_2\rangle^{\kappa}f\|_{L^1_{\langle y\rangle^{\eps_*}dy}}.
\end{equation}
As for radiative decay, we have the following: for every $\sigma \in
[1/2, 1]$, it holds that
\[
\|\langle y\rangle^{-\sigma} P_c
e^{it\sqrt{\mathcal{L}}}f\|_{L^\infty_{dy}}\lesssim \langle
t\rangle^{-\sigma}\big[\|\langle y\rangle^{\sigma}f\|_{L^1_{dy}} +
\|\langle y\rangle^{\sigma}f'\|_{L^1_{dy}}\big],
\]
and 
\[
\|\langle y\rangle^{-\sigma} P_c
\frac{\sin(t\sqrt{\mathcal{L}})}{\sqrt{\mathcal{L}}}g\|_{L^\infty_{dy}}\lesssim
\langle t\rangle^{-\sigma}\|\langle y\rangle^{\sigma}g\|_{L^1_{dy}}.
\]
\end{prop}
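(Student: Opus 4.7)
The plan is to establish the four families of estimates in order of increasing difficulty, relying throughout on (i) unitarity of $e^{it\sqrt{\mathcal{L}}}$ on the absolutely continuous subspace $P_c L^2(dy)$, (ii) smoothness and $\jb{y}^{-2}$ decay of the potential $V(y) = -(6+y^2)/(4\jb{y}^4)$ appearing in $\mathcal{L} = -\pr_y^2 + V$, and (iii) the absence of a zero-energy $L^2$ resonance, which is visible from the explicit non-integrable form of the null-space functions $\eta_0$ in \eqref{eq:zeroeigen}.

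For \eqref{eq:plainenergy1} I would start from $L^2$ unitarity, which handles $\alpha=0$. Time derivatives become powers of $\sqrt{\mathcal{L}}$, and the norm equivalence $\|\sqrt{\mathcal{L}} P_c f\|_{L^2} \sim \|\pr_y P_c f\|_{L^2}$ follows from boundedness of $V$; spatial derivatives are handled iteratively using $\pr_y^2 = -\mathcal{L} + V$. For \eqref{eq:weightedenergy1} I would work from the commutator
\begin{equation*}
[\Gamma_2,\ -\pr_t^2 + \pr_y^2 + V] \;=\; -2(-\pr_t^2+\pr_y^2) \;-\; yV'(y),
\end{equation*}
so that if $u = P_c e^{it\sqrt{\mathcal{L}}} f$, then $\Gamma_2 u$ solves an inhomogeneous wave equation whose source is $(2V + yV'(y))\,u$, a bounded multiplier decaying like $\jb{y}^{-2}$. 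A Duhamel expansion combined with a Gronwall argument and iteration in $\alpha$ closes the bound; the weight $\jb{y}^{\alpha}$ on the data reflects the fact that $\Gamma_2|_{t=0} = y\pr_y$.

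For the sine propagator estimates \eqref{eq:plainenergy2} and \eqref{eq:weightedenergy2} the only genuinely new ingredient is control of $1/\sqrt{\mathcal{L}}$ near $\xi=0$. I would split $f$ into high- and low-frequency pieces via a smooth spectral cutoff of $\mathcal{L}$: on high frequencies $1/\sqrt{\mathcal{L}}$ is bounded and one falls back on \eqref{eq:plainenergy1} or \eqref{eq:weightedenergy1} with one fewer derivative; on low frequencies the non-resonance property---equivalently, uniform boundedness of the Jost functions as $\xi\to 0$---produces a uniform-in-$t$ bound on $P_c \chi_{\mathrm{low}}(\mathcal{L}) \sin(t\sqrt{\mathcal{L}})/\sqrt{\mathcal{L}}$ from $L^1_{\jb{y}^{\eps_*}}$ into $L^2$, accounting for the $L^1$ term on the right-hand side.

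The main obstacle is the weighted pointwise decay. Since $\sqrt{\mathcal{L}}$ generates a half-wave rather than a Schrödinger flow, there is no dispersion in the usual sense: the rate $\jb{t}^{-\sigma}$ is a local-in-space decay reflecting the fact that mass transported to $|y|\sim t$ is penalized by the weight $\jb{y}^{-\sigma}$. I would realize this through the distorted Fourier representation
\begin{equation*}
P_c e^{it\sqrt{\mathcal{L}}} f(y) \;=\; \int_0^\infty e^{it\xi}\, \psi(\xi, y)\, \widetilde{f}(\xi)\, d\xi,
\end{equation*}
where $\psi(\xi, y)$ is a Jost-type generalized eigenfunction satisfying $\mathcal{L}\psi=\xi^2\psi$. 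For $\sigma=1/2$, the uniform bound $\|\psi(\xi,\cdot)\|_{L^\infty_y}\lesssim 1$ together with classical stationary-phase estimates in $\xi$ yields the $\jb{t}^{-1/2}$ rate with the weighted $L^1$ data norm on the right. To upgrade to $\sigma=1$, a single integration by parts in $\xi$ trades a factor of $t$ for a factor of $\jb{y}$ via the estimate $\pr_\xi \psi(\xi,y) = O(\jb{y})$, at the cost of differentiating $f$ once---producing the $\|\jb{y}\,f'\|_{L^1}$ term. Complex interpolation in $\sigma$ fills in the intermediate range $\sigma\in(1/2,1)$. The most delicate input, which I expect to be where most of the effort goes, is controlling the spectral measure of $\mathcal{L}$ at the threshold $\xi=0$ and proving the required smoothness and pointwise bounds on $\psi(\xi,y)$ and $\pr_\xi\psi(\xi,y)$ jointly in $\xi$ and $y$; both rest crucially on the non-$L^2$ behavior of $\eta_0$ described in \eqref{eq:zeroeigen}.
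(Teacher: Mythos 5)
First, a point of reference: the paper does not actually prove Proposition \ref{eq:keyest1}; it imports it wholesale from the companion work \cite{DoKr}, and only describes the method used there, namely the distorted Fourier transform (representation of $P_c$-evolutions by generalized eigenfunctions, analysis of the spectral measure at the zero-energy threshold, oscillatory-integral estimates), together with a ``vector field method on the distorted Fourier side'' for the $\Gamma_2$-bounds. Measured against that, your plan for \eqref{eq:plainenergy1}, for the sine bounds \eqref{eq:plainenergy2}, and for the weighted pointwise decay is essentially the same route in outline: unitarity plus elliptic iteration for the unweighted energy bounds, threshold non-resonance (read off from the non-$L^2$ zero modes \eqref{eq:zeroeigen}) for the low-frequency part of $\sin(t\sqrt{\mathcal{L}})/\sqrt{\mathcal{L}}$, and stationary phase/integration by parts in the distorted frequency variable, trading $t$ for $\jb{y}$, for the $\jb{t}^{-\sigma}$ decay. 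All the substantive work there is the joint control of $\psi(\xi,y)$, $\pr_\xi\psi(\xi,y)$ and the spectral measure near $\xi=0$, which is exactly what \cite{DoKr} carries out, so this part is an acceptable sketch of the actual argument.

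The genuine divergence, and the gap, is in the weighted bounds \eqref{eq:weightedenergy1}--\eqref{eq:weightedenergy2}. The paper stresses that $\Gamma_{1,2}$ do \emph{not} commute with $\mathcal{L}$ and that \cite{DoKr} therefore estimates the conjugate of $\Gamma_2$ under the distorted Fourier transform, rather than arguing in physical space. Your substitute --- commute $\Gamma_2$ with the wave operator and absorb the error by Duhamel ``plus Gronwall'' --- does not close as stated. If $u=P_ce^{it\sqrt{\mathcal{L}}}f$, the source for $\Gamma_2 u$ is $(2V+yV')u=O(\jb{y}^{-2})u$, which involves $u$, not $\Gamma_2u$, so Gronwall is not even the relevant mechanism; one needs the Duhamel integral itself to be bounded uniformly in $t$, whereas the naive energy estimate gives linear growth since $\|\jb{y}^{-2}u(s)\|_{L^2}$ is merely bounded. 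Gaining integrability in $s$ requires feeding in decay for $u$: the weighted $L^\infty$ decay gives at best $\jb{s}^{-\sigma}$ with $\sigma\le 1$ (a logarithmically divergent borderline), and the local energy decay of Proposition \ref{eq:keyest3} gives only $t^{1/2}$ growth by Cauchy--Schwarz; moreover both of these inputs are themselves part of the same package of linear estimates, so the logical ordering must be arranged to avoid circularity. A workable physical-space variant would need an ingredient you do not mention --- e.g.\ the dual (inhomogeneous) local-energy/local-smoothing estimate bounding the energy of the Duhamel term by $\|\jb{y}^{-3/2+}u\|_{L^2_{t,y}}$ --- and the case $\kappa=2$, as well as mixed weights $\jb{y}^{\alpha}$ for higher $\alpha$, compounds the iteration. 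Minor further points: the sign of the $yV'$ term in your commutator identity is off (harmless), and the claimed equivalence $\|\sqrt{\mathcal{L}}P_cf\|_{L^2}\sim\|\pr_yP_cf\|_{L^2}$ should be weakened to two-sided inequalities up to $\|f\|_{L^2}$, which is all that \eqref{eq:plainenergy1} requires.
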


\begin{rem}
In \cite{DoKr}, the linear estimates \eqref{eq:plainenergy2} and
\eqref{eq:weightedenergy2} are proven with $\eps_* = 0$, which imply
the stated versions above. We introduce the $\jb{y}^{\eps_*}$ weight
(which should be regarded as a constraint on the decay rate of initial
data) in order to prevent logarithmic divergences when controlling the 
nonlinear contributions. As such, $\eps_*$ should be considered a small
positive number that is fixed once and for all. 
\end{rem}

\begin{rem}
For the radiative decay, the value $\sigma = \frac12$ corresponds to
the natural, unweighted decay for wave equations on $1+2$ dimensional
space-times. The values $\sigma > \frac12$ are weighted estimates
factoring in spatial localizations. 
\end{rem}

The preceding bounds are still too crude to handle the unweighted cubic
interaction terms that shows up in $Q_3$ of \eqref{eqs:mainQuasi}, and 
so we complement them with the following. 
\begin{prop}\label{eq:keyest2}For any multi-index $\alpha = (\alpha_1, \alpha_2)\in \N_{\geq 0}^2$, $|\alpha|\geq 1$, we have 
\begin{equation}\label{eq:plainenergy3}
\|\nabla_{t,y}^{\alpha}P_c \frac{\sin(t\sqrt{\mathcal{L}})}{\sqrt{\mathcal{L}}}(\partial_y f)\|_{L^2_{dy}}\lesssim \|\langle\partial_y\rangle^{|\alpha|}f\|_{L^2_{dy}}, 
\end{equation}
\begin{equation}\label{eq:weightedenergy3}
\|\nabla_{t,y}^{\alpha}\Gamma_2^{\kappa}P_c \frac{\sin(t\sqrt{\mathcal{L}})}{\sqrt{\mathcal{L}}}(\partial_y f)\|_{L^2_{dy}}\lesssim \|\langle\partial_y\rangle^{|\alpha|}\langle \Gamma_2\rangle^{\kappa}f\|_{L^2_{dy}},
\end{equation}
as well as for the inhomogeneous evolution 
\begin{equation}\label{eq:plainenergy4}
\|\nabla_{t,y}^{\alpha}P_c
\int_0^t\frac{\sin\left((t-s)\sqrt{\mathcal{L}}\right)}{\sqrt{\mathcal{L}}}(\partial_s F)\|_{L^2_{dy}}\lesssim \|\langle\nabla_{s,y}\rangle^{|\alpha|}F\|_{L_s^1L^2_{dy}}, 
\end{equation}
\begin{equation}\label{eq:weightedenergy4}
\|\nabla_{t,y}^{\alpha}\Gamma_2^{\kappa}P_c
\int_0^t\frac{\sin\left((t-s)\sqrt{\mathcal{L}}\right)}{\sqrt{\mathcal{L}}}(\partial_s F)\|_{L^2_{dy}}\lesssim \|\langle\nabla_{s,y}\rangle^{|\alpha|}\langle \Gamma_2\rangle^{\kappa}F\|_{L_s^1L^2_{dy}}. 
\end{equation}
\end{prop}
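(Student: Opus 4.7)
The unifying theme is to exploit the boundedness of the operator
\[
T := \sqrt{\mathcal{L}}^{-1}\partial_y P_c : L^2_{dy} \to L^2_{dy},
\]
which recovers the derivative ``lost'' by $\sqrt{\mathcal{L}}^{-1}$ and thus removes the $L^1$-term that obstructs \eqref{eq:plainenergy2}. Granting this mapping property, estimate \eqref{eq:plainenergy3} follows from \eqref{eq:plainenergy1} applied to $Tf$: writing
\[
\frac{\sin(t\sqrt{\mathcal{L}})}{\sqrt{\mathcal{L}}}\partial_y P_c = \sin(t\sqrt{\mathcal{L}})\,T = \tfrac{1}{2i}\bigl(e^{it\sqrt{\mathcal{L}}}-e^{-it\sqrt{\mathcal{L}}}\bigr)T,
\]
bounding the right-hand side by \eqref{eq:plainenergy1} and using $\|Tf\|_{L^2}\lesssim \|f\|_{L^2}$ yields the claim.

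Proving $\|T\|_{L^2\to L^2}<\infty$ is, in my view, the main obstacle. Since the potential part of $\mathcal{L}$ is smooth, bounded, and of order $O(\jb{y}^{-2})$, and since neither of the explicit zero-energy solutions in \eqref{eq:zeroeigen} is $L^2$-integrable (no threshold resonance) while the continuous spectrum carries no embedded eigenvalues, $\mathcal{L}$ admits a distorted Fourier transform $\mathcal{F}:P_c L^2_{dy}\to L^2(\R,dk)$ under which $\sqrt{\mathcal{L}}$ acts as multiplication by $|k|$. A Jost function / WKB analysis of the generalized eigenfunctions then shows that $\mathcal{F}\partial_y\mathcal{F}^{-1}$ is a zero-order perturbation of the Fourier multiplier $ik$, so $\mathcal{F}T\mathcal{F}^{-1}$ is of order zero and hence $L^2$-bounded. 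An equivalent, more operator-theoretic route uses the intertwining wave operators $W_\pm$ for the pair $(\mathcal{L},-\partial_y^2)$ on $P_c L^2$: this reduces matters to the $L^2$-boundedness of the free analog $\partial_y(-\partial_y^2)^{-1/2}$, essentially a Hilbert transform.

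To upgrade to the weighted estimate \eqref{eq:weightedenergy3}, I commute $\Gamma_2$ through the propagator. A direct computation using $[\partial_t,\Gamma_2]=\partial_t$ and $[\partial_y,\Gamma_2]=\partial_y$ gives
\[
[-\partial_t^2+\mathcal{L},\ \Gamma_2] \;=\; 2(-\partial_t^2+\mathcal{L}) - \bigl(2U + yU'\bigr),
\]
where $U(y):=-\frac{6+y^2}{4\jb{y}^4}=O(\jb{y}^{-2})$ is the potential part of $\mathcal{L}$, and the correction $2U+yU'$ is a smooth multiplier also of order $\jb{y}^{-2}$. Iterating this identity $\kappa$ times, writing the result via Duhamel with source given by lower $\Gamma_2$-powers multiplied by bounded, $\jb{y}^{-2}$-decaying coefficients, and finally applying \eqref{eq:plainenergy3} to the homogeneous parts and \eqref{eq:weightedenergy2} to the inhomogeneous parts produces \eqref{eq:weightedenergy3}; the $\jb{y}^{-2}$ decay of the error coefficients absorbs the $\langle y\rangle^\kappa$-weights arising in the intermediate bounds.

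For the inhomogeneous statements, I integrate by parts in $s$ using $\partial_s\bigl[\tfrac{\sin((t-s)\sqrt{\mathcal{L}})}{\sqrt{\mathcal{L}}}\bigr]=-\cos((t-s)\sqrt{\mathcal{L}})$, which yields
\[
\int_0^t\frac{\sin((t-s)\sqrt{\mathcal{L}})}{\sqrt{\mathcal{L}}}\partial_s F\,ds \;=\; -\frac{\sin(t\sqrt{\mathcal{L}})}{\sqrt{\mathcal{L}}}F(0)+\int_0^t \cos((t-s)\sqrt{\mathcal{L}})\,F(s)\,ds.
\]
The integral term is controlled directly by \eqref{eq:plainenergy1}, producing $\|\langle\partial_y\rangle^{|\alpha|}F\|_{L^1_sL^2_{dy}}$. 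For the boundary contribution, replacing $F(0)=F(s)-\int_0^s\partial_\tau F\,d\tau$ and averaging in $s$ against a normalized cut-off on $[0,t]$ exchanges the pointwise value $F(0)$ for $L^1_sL^2$-quantities already controlled by $\|\langle\nabla_{s,y}\rangle^{|\alpha|}F\|_{L^1_sL^2}$. The weighted bound \eqref{eq:weightedenergy4} then follows by first commuting $\Gamma_2^\kappa$ through the Duhamel integral as in the previous paragraph and then applying the integration-by-parts argument to each resulting piece.
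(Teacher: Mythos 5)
The key step in your plan fails: the operator $T=\mathcal{L}^{-1/2}P_c\partial_y$ (equivalently its adjoint $\partial_yP_c\mathcal{L}^{-1/2}$) is \emph{not} bounded on $L^2_{dy}$ for this particular $\mathcal{L}$. The potential $-\frac{6+y^2}{4\jb{y}^4}$ has the \emph{critical} inverse-square tail $-\frac{1}{4y^2}$, which is exactly why the zero-energy solutions \eqref{eq:zeroeigen} grow like $\jb{y}^{1/2}$ and $\jb{y}^{1/2}\log\jb{y}$ instead of being bounded Jost-type solutions; the standard short-range low-energy theory you invoke (``Jost/WKB analysis shows $\mathcal{F}\partial_y\mathcal{F}^{-1}$ is a zero-order perturbation of $ik$'') does not apply at $k=0$, and the wave-operator route is also broken since $W_\pm$ intertwine $\mathcal{L}P_c$ with $-\partial_y^2$ but do not commute with $\partial_y$, so nothing reduces to the free Riesz transform. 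Concretely, boundedness of $T$ is equivalent by duality to $\|\partial_y u\|_{L^2}\lesssim\|\mathcal{L}^{1/2}u\|_{L^2}$ on $P_cL^2$, and this inequality fails: take $u_R=P_c\big(\chi(y/R)\,\eta_0\big)$ with $\eta_0=\jb{y}^{1/2}\big(\tfrac{y}{\jb{y}}\sinh^{-1}y-1\big)$ the even zero mode; since $\mathcal{L}\eta_0=0$, an integration by parts gives $\langle\mathcal{L}u_R,u_R\rangle=\int|\chi'(y/R)|^2R^{-2}\eta_0^2\,dy+O(1)\sim\log^2R$, while $\|\partial_yu_R\|_{L^2}^2\gtrsim\int_1^{R/2}|\eta_0'|^2\,dy\sim\log^3R$, so the ratio diverges like $\log R$. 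Thus \eqref{eq:plainenergy3} cannot be obtained by factoring out a single bounded operator and quoting \eqref{eq:plainenergy1}; the estimate is true only because derivatives sit on \emph{both} sides of $\sin(t\sqrt{\mathcal{L}})/\sqrt{\mathcal{L}}$ (the outer $\nabla_{t,y}^{\alpha}$, $|\alpha|\ge1$, together with the inner $\partial_y$ or $\partial_s$), and this double-gradient gain at low distorted frequencies is precisely what requires the detailed spectral-measure and eigenfunction analysis of \cite{DoKr} — indeed the paper does not prove this proposition at all but quotes it from there, where it is established on the distorted Fourier side.

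Two further steps would not close even if $T$ were bounded. For \eqref{eq:weightedenergy3}, commuting $\Gamma_2$ through the equation produces, via Duhamel, sources of the form $(2U+yU')\Gamma_2^{j}u$; applying \eqref{eq:weightedenergy2} at each fixed time and integrating yields $\int_0^t\|\cdots(s)\|\,ds$, which on energy bounds alone grows linearly in $t$, whereas \eqref{eq:weightedenergy3} is uniform in $t$ — to absorb this you would need integrated local energy decay for the evolution itself, which is not available at this stage (this is why \cite{DoKr} treat $\Gamma_2$ directly as an operator on the distorted Fourier side rather than by physical-space commutation). For \eqref{eq:plainenergy4}, your integration by parts in $s$ trades the favorable structure $\partial_sF$ for the boundary term $\frac{\sin(t\sqrt{\mathcal{L}})}{\sqrt{\mathcal{L}}}P_cF(0)$, which, once an outer $\partial_y$ is applied, again demands the low-frequency gain; the averaging device does not repair this: writing $F(0)=F(s)-\int_0^s\partial_\tau F\,d\tau$ and averaging in $s$ gives, after Fubini, a term $\frac1t\int_0^t(t-\tau)\frac{\sin(t\sqrt{\mathcal{L}})}{\sqrt{\mathcal{L}}}P_c\partial_\tau F\,d\tau$ whose available bound grows like $\jb{t}\,\|\partial_sF\|_{L^1_sL^2}$ (the input carries no spatial gradient structure), so a pointwise-in-time value cannot be exchanged for $L^1_s$ norms without reintroducing exactly the loss you set out to avoid.
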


In order to handle the local terms in \eqref{eq:Main}, we need a local energy decay result. This is given by the following 
\begin{prop}\label{eq:keyest3} We have the space-time bounds 
\begin{align*}
&\|\langle y\rangle^{-1}\nabla_{t,y}^{\alpha}\Gamma_2^{\kappa}\cos(t\sqrt{\mathcal{L}})P_c f\|_{L^2_{t,y}} + \|\langle y\log y\rangle^{-1}\nabla_{t,y}^{\alpha}\Gamma_2^{\kappa}\frac{\sin(t\sqrt{\mathcal{L}})}{\sqrt{\mathcal{L}}}P_cg\|_{L_{t,y}^2}
\\&\lesssim \|\langle\partial_y\rangle^{|\alpha|}\langle\Gamma\rangle^{\kappa}f\|_{L^2_{dy}} + \|\langle\partial_y\rangle^{|\alpha|-1}\langle\Gamma\rangle^{\kappa}g\|_{L^2_{\langle y\rangle^{1+}dy}}.
\end{align*}
The inhomogeneous version with source terms of gradient structure is as follows: 
\begin{align*}
\|\langle y \log y\rangle^{-1} \nabla_{t,y}^{\alpha}\Gamma_2^{\kappa}P_c \int_0^t\frac{\sin(t-s)\sqrt{\mathcal{L}})}{\sqrt{\mathcal{L}}}(\partial_{s,y} F)\|_{L^2_{t,y}}\lesssim \|\langle\nabla_{s,y}\rangle^{|\alpha|}\langle \Gamma_2\rangle^{\kappa}F\|_{L_s^1L^2_{dy}}. 
\end{align*}
\end{prop}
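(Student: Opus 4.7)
The plan is to derive these local energy decay estimates by combining a distorted Fourier representation of $e^{\pm it\sqrt{\mathcal{L}}}P_c$ (to handle the base case $\alpha=\kappa=0$), commutator identities for $\nabla_{t,y}$ and $\Ga_2$ (to upgrade to the weighted version), and integration by parts to exploit the gradient structure in the inhomogeneous source, all in the spirit of the analysis in \cite{DoKr} that produced Propositions~\ref{eq:keyest1} and~\ref{eq:keyest2}.

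In the base case $\alpha=\kappa=0$, since $\mathcal{L}=-\partial_y^2+V$ with $V=-(6+y^2)/(4\jb{y}^4)=O(\jb{y}^{-2})$ is a short-range perturbation of $-\partial_y^2$, the continuous spectrum is diagonalized by Jost-type generalized eigenfunctions $\psi(y,\xi)$. By Plancherel in $t$ the cosine bound is equivalent to the limiting absorption estimate
\[
\sup_{z\in\mathbb{C}\setminus\R}\|\jb{y}^{-1}(\mathcal{L}-z)^{-1}\jb{y}^{-1}\|_{L^2\to L^2}<\infty,
\]
which I would establish by a standard Agmon-type argument together with the low-frequency expansion of $\psi$. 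The explicit zero-energy solutions in \eqref{eq:zeroeigen}, which grow like $\jb{y}^{1/2}$ and $\jb{y}^{1/2}\log\jb{y}$ respectively, rule out a zero-energy resonance. The extra $1/\sqrt{\mathcal{L}}$ factor in the sine propagator introduces a borderline singularity at $\xi=0$, and the logarithmically growing zero mode is precisely what dictates the critical weight $\jb{y\log y}^{-1}$ on the output, as well as the slightly worse weight $\jb{y}^{1+}$ required on the data $g$.

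To upgrade to general $\alpha,\kappa$ I would commute the vector fields through the propagators. Time derivatives are absorbed via $\partial_t^2=-\mathcal{L}$ on solutions; spatial derivatives generate the commutator $[\partial_y,\mathcal{L}]=-V'(y)$ with coefficient $O(\jb{y}^{-3})$, strictly better-localized than the base source, hence closed by induction on $|\alpha|$. For $\Ga_2=t\partial_t+y\partial_y$ a direct calculation gives
\[
[\Ga_2,-\partial_t^2+\mathcal{L}] = -2(-\partial_t^2+\mathcal{L})+2V+yV',
\]
so that if $u$ solves $(-\partial_t^2+\mathcal{L})u=0$ then $\Ga_2 u$ solves an inhomogeneous equation with \emph{localized} source $-(2V+yV')u=O(\jb{y}^{-2})u$; iterating in $\kappa$ reduces each step to an application of the inhomogeneous estimate to a source strictly better than the left-hand side.

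For the inhomogeneous estimate with gradient source $\partial_{s,y}F$, integration by parts is the key mechanism. In the $\partial_s F$ case Duhamel gives
\[
\int_0^t\frac{\sin((t-s)\sqrt{\mathcal{L}})}{\sqrt{\mathcal{L}}}(\partial_s F)\,ds = F(t,\cdot)-\cos(t\sqrt{\mathcal{L}})P_c F(0,\cdot)-\int_0^t\cos((t-s)\sqrt{\mathcal{L}})F(s,\cdot)\,ds,
\]
and the three pieces are bounded respectively by a direct weighted $L^2_{t,y}$ estimate on $F$, the homogeneous cosine bound, and Minkowski combined with the same cosine local energy estimate. In the $\partial_y F$ case, commute $\partial_y$ past the propagator at the benign $O(\jb{y}^{-3})$ cost and proceed as before. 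The principal obstacle I anticipate is pinning down the sharp weight $\jb{y\log y}^{-1}$ in the sine estimate: this factor is borderline simultaneously at the zero spectral threshold and at spatial infinity, and requires a careful tracking of the logarithmic singularity of $(\mathcal{L})^{-1/2}P_c$ at the edge of the continuous spectrum, coupled to the $\jb{y}^{1/2}\log\jb{y}$ growth of the even zero mode, which is the real technical core beyond the dispersive bounds of \cite{DoKr}.
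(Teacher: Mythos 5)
You should first be aware that the paper does not prove Proposition~\ref{eq:keyest3} at all: it is imported verbatim from \cite{DoKr}, where it is obtained by constructing the distorted Fourier transform of $\mathcal{L}$ (generalized eigenfunctions and a careful description of the spectral measure, in particular near the spectral edge) and by implementing the vector field $\Gamma_2$ \emph{on the distorted Fourier side}, precisely because $\Gamma_{1,2}$ do not commute with $\mathcal{L}$. Your proposal takes a genuinely different, physical-space route: Kato-smoothing/limiting absorption for the base case and commutation of $\Gamma_2$ with the perturbed wave operator, treating the commutator potential as a localized source (note, incidentally, that the relevant operator is $\partial_t^2+\mathcal{L}$, not $-\partial_t^2+\mathcal{L}$; with that correction your identity is right, and since the critical part $-\tfrac{1}{4y^2}$ of the potential is homogeneous of degree $-2$ one even has $2V+yV'=O(\jb{y}^{-4})$). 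As a program this is not unreasonable, but as written it is a plan rather than a proof, and the steps you defer are exactly the content of the proposition.

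Concretely: (i) the base case is not established. Because of the critical tail $V=-\tfrac{1}{4y^2}+O(y^{-4})$, $\mathcal{L}$ is \emph{not} short range at zero energy: the zero-energy solutions \eqref{eq:zeroeigen} grow like $y^{1/2}$ and $y^{1/2}\log y$, the Jost solutions and the spectral measure behave anomalously as $\xi\to0$, and a ``standard Agmon-type argument'' does not by itself give the limiting absorption bound uniformly down to the threshold, let alone the log-weighted sine bound with the extra $\mathcal{L}^{-1/2}$ singularity (also, the Plancherel-in-$t$ reduction you invoke is the one for $e^{it\mathcal{L}}$, not directly for $\cos(t\sqrt{\mathcal{L}})$); you yourself call this ``the real technical core,'' i.e.\ it is missing. (ii) Your induction on $\kappa$ requires an inhomogeneous local-energy estimate with a \emph{localized, non-gradient} source measured in a space-time norm ($L^2_{t,y}$ on both sides). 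This is not among the stated estimates and does not follow from the homogeneous bound by Minkowski (which only handles $L^1_tL^2_y$ sources, as in the stated inhomogeneous estimate) nor by Christ--Kiselev (unavailable at equal exponents $p=q=2$); it would need a double-endpoint Kato-smoothness argument resting on the very resolvent bounds left unproved in (i). (iii) ``Commuting $\partial_y$ past the propagator at the benign $O(\jb{y}^{-3})$ cost'' is not literal: $[\partial_y,f(\mathcal{L})]$ is a nonlocal operator, and $[\Gamma_2,P_c]\neq0$ also has to be handled. So the proposal correctly identifies where the difficulty lies, but it does not close any of these steps; the paper itself closes them only by citing \cite{DoKr}.
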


These linear estimates were proven in \cite{DoKr} for a large class of
$1+1$ dimensional wave equations with potentials. The main technique
used there is that of the \emph{distorted Fourier transform}, which is
essentially the representation of solutions as superpositions of
generalized eigenfunctions of the linear operator $\mathcal{L}$
(analogous to how the Fourier transform represents functions as
superpositions of generalized eigenfunctions of the Laplacian). When
restricted to the continuous part of the spectrum, this representation
allows the use of oscillatory integral techniques to obtain dispersive
estimates. Part of the difficulty in implementing this process is in
analyzing the spectral measure and obtain suitable descriptions of the
generalized eigenfunctions. These are done in detail in \cite{DoKr}. 

To control the decay rate of the nonlinear evolution, in addition to
the standard dispersive bounds we also need a variation of the
\emph{vector field method} to take advantage of the algebraic
structure of the nonlinearities. 
Recall that the vectorfields associated to $-\pr^2_t+\pr^2_y$ 
are the Lorentz boost generator $\Ga_1=t\pr_y+y\pr_t$ and the generator of scaling symmetry $\Ga_2=t\pr_t+y\pr_y$. 
In our problem, we cannot obtain estimates on $\Gamma \phi$ using the
estimates available for solutions $\phi$, since $\Gamma_{1,2}$ do not
commute with the linearized operator $\mathcal{L}$. To overcome this
in \cite{DoKr} a method is introduced where bounds on $\Gamma_2$ are
obtained by studying its analogue under the distorted Fourier
transform. From these linear
estimates on $\Gamma_2$ derivatives we obtain control on $\Gamma_1$
derivatives, by using the structure of the equation and
the behavior of the solution in the space-time regions $y\ll t$ and 
$y\gtrsim t$; see Lemma \ref{lem:3} for details.


\subsection{Main Theorem}
\label{sec:statement}

The unstable mode associated with $\mathcal{L}$ should lead in general
to exponentially growing solutions for \eqref{eq:Maintilde}, even for
arbitrarily small initial data. Nonetheless, it is natural to expect
the existence of a suitable co-dimension one set of small initial data
corresponding  to solutions which exist globally in forward time and
decay toward zero, i.\ e.\ the evolved surface converges to the static catenoid. This is proved in the following theorem which is our main result. 

\begin{thm}[Codimension one stability of the catenoid]\label{thm:Main}
Let us be given a pair of even functions $(\tilde{\phi}_1, \tilde{\phi}_2)\in W^{N_0,1}(\R)\cap W^{N_0, 2}(\R)$ satisfying the smallness condition 
\[
\Vert \tilde{\phi}\Vert_{X_0}: = \sum_{j=1,2}\|\langle y\rangle^{N_0-j+1}\langle\partial_y\rangle^{N_0-j+1}\tilde{\phi}_j\|_{L^1_{dy}\cap L^2_{dy}}\leq \delta_0
\]
for $\delta_0>0$ sufficiently small, and $N_0$ sufficiently large. Then there exists a parameter $a\in \R$ which depends Lipschitz continuously on $\tphi_{1,2}$ with respect to $X_0$ such that the solution $\tilde{\phi}$ of \eqref{eq:Maintilde} corresponding to the initial data 
\[
\left(\tilde{\phi}(0,\cdot), \pr_t\tilde{\phi}(0,\cdot)\right) = (\tilde{\phi}_1 + a g_d, \tilde{\phi}_2)
\]
exists globally in forward time $t>0$. Moreover, $\phi=\langle y\rangle^{-1/2}\tilde{\phi}$ decays toward zero: 
$$|\phi(t,\cdot)|\lesssim \langle t\rangle^{-\frac{1}{2}}.$$ 
\end{thm}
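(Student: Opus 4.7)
The plan is to decompose the solution along the spectral subspaces of $\mathcal{L}$. Write $\tilde\phi(t,y)=\alpha(t)g_d(y)+\psi(t,y)$, where $\psi=P_c\tilde\phi$. Since $P_d$, $P_c$ commute with $\partial_t^2$, applying them to \eqref{eq:Maintilde} splits the problem into an unstable ODE
\begin{equation*}
\ddot\alpha(t)-k_d^2\alpha(t)=\|g_d\|^{-2}\jb{g_d,\jb{y}^{1/2}F}_{L^2_{dy}}
\end{equation*}
and a wave equation for $\psi$,
\begin{equation*}
\partial_t^2\psi+\mathcal{L}\psi=P_c\bigl(\jb{y}^{1/2}F\bigr),
\end{equation*}
coupled through the nonlinearity $F=F(\phi,\nabla\phi,\nabla^2\phi)$. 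The initial conditions split as $\psi(0)=P_c\tilde\phi_1$, $\dot\psi(0)=P_c\tilde\phi_2$, while $\alpha(0)=a+\|g_d\|^{-2}\jb{g_d,\tilde\phi_1}$ and $\dot\alpha(0)=\|g_d\|^{-2}\jb{g_d,\tilde\phi_2}$. The free parameter $a$ is the shooting parameter used to kill the unstable exponential in $\alpha$.

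Next I would set up a bootstrap on a norm $\|\psi\|_X$ combining the weighted energy and vector-field norms that appear on the left-hand sides of Propositions \ref{eq:keyest1}--\ref{eq:keyest3}, indexed by $|\alpha|,\kappa\le N_0'$ for some $N_0'\ll N_0$, together with the radiative pointwise decay $\|\jb{y}^{-\sigma}\nabla_{t,y}^\beta\Ga_2^\kappa\psi\|_{L^\infty_{dy}}\lesssim C\delta_0\jb{t}^{-\sigma}$. Under the bootstrap hypothesis $\|\psi\|_X+\sup_t\jb{t}^{1/2}(|\alpha(t)|+|\dot\alpha(t)|)\le C\delta_0$, I would represent $\psi$ via Duhamel's principle and apply the linear estimates of Propositions \ref{eq:keyest1}--\ref{eq:keyest3} to reduce matters to controlling $\|P_c(\jb{y}^{1/2}F)\|_{L^1_tL^2_{dy}}$ and the analogous weighted/vector-field versions. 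The key nonlinear observation (already flagged in Section \ref{subsec:nonlin}) is that every term in $F$ except the cubic piece \eqref{eq:potdanger} carries a spatial weight $\jb{y}^{-k}$ with $k\ge2$; in the wave zone $|y|\sim t$ this contributes $t^{-k}$ and is far below the integrability threshold, while in the near zone $|y|\ll t$ Proposition \ref{eq:keyest3} supplies the required local energy decay (the small $\eps_*$ weight absorbs the logarithmic loss). The unweighted cubic \eqref{eq:potdanger} is handled by the null-form identity displayed in Section \ref{subsec:nonlin}: the divergence terms are fed into the $\partial_sF$ versions \eqref{eq:plainenergy4}, \eqref{eq:weightedenergy4}, gaining one derivative, while the residual $(\partial_t\phi)^2(\partial_{yy}^2-\partial_{tt}^2)\phi$ is replaced via \eqref{eq:Main} by weighted quartic terms. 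This closes the bootstrap for $\psi$, provided $\alpha$ remains bounded by $C\delta_0\jb{t}^{-1/2}$.

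The tunable parameter $a$ is chosen to enforce the latter. Diagonalize the $\alpha$-system by setting $\alpha_\pm=\tfrac12(\alpha\pm\dot\alpha/k_d)$, so that $\dot\alpha_\pm=\pm k_d\alpha_\pm+\tfrac{1}{2k_d}\|g_d\|^{-2}\jb{g_d,\jb{y}^{1/2}F}$. The unstable component satisfies
\begin{equation*}
\alpha_+(t)=e^{k_dt}\Bigl(\alpha_+(0)+\tfrac{1}{2k_d\|g_d\|^2}\int_0^\infty e^{-k_ds}\jb{g_d,\jb{y}^{1/2}F(s,\cdot)}\,ds\Bigr)-\tfrac{1}{2k_d\|g_d\|^2}\int_t^\infty e^{-k_d(s-t)}\jb{g_d,\jb{y}^{1/2}F(s,\cdot)}\,ds,
\end{equation*}
so boundedness of $\alpha_+$ forces the bracket to vanish. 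Because $g_d$ is exponentially localized and $F$ is quadratic-and-higher in a solution satisfying the bootstrap, both integrals converge. The bracket defines a map $\Psi\colon a\mapsto a+\mathcal{N}(a,\tilde\phi_1,\tilde\phi_2)$ on a small interval in $\R$, and $\mathcal{N}$ is a contraction in $a$ (with constant $\lesssim\delta_0$) by comparing two solutions of the coupled system and running the same bootstrap on their difference. The Banach fixed-point theorem then yields a unique $a=a(\tilde\phi_1,\tilde\phi_2)$ with $|a|\lesssim\delta_0^2$, and the same contraction estimate, applied to the difference of the sources corresponding to two different data, yields Lipschitz dependence of $a$ on $(\tilde\phi_1,\tilde\phi_2)$ in $X_0$. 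With this $a$ fixed, the global bootstrap closes, $\psi\in X$ and $\alpha\to0$, hence $\phi=\jb{y}^{-1/2}(\alpha g_d+\psi)$ decays at rate $\jb{t}^{-1/2}$ as claimed.

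The main obstacle, in my view, is not the shooting argument itself (which is essentially standard once the linear theory is available) but the simultaneous closure of the $\Ga_1,\Ga_2$ bounds together with the unweighted cubic null-form estimate: since $\Ga_{1,2}$ do not commute with $\mathcal{L}$, differentiating the equation generates commutator errors with potential-type coefficients, and the null-form manipulation above must be carried out after commuting with $\Ga$, which produces additional mixed terms. Reconciling these with the local energy decay of Proposition \ref{eq:keyest3} and the borderline $\eps_*$ weight, without losing derivatives, is the delicate point; this is also where the $1+1$ reduction afforded by rotational symmetry is crucially used, since it eliminates the angular-momentum channels where trapping at the collar would otherwise cost a derivative.
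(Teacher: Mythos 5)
Your decomposition $\tphi=\alpha g_d+\psi$, the bootstrap scheme built on Propositions \ref{eq:keyest1}--\ref{eq:keyest3}, and the treatment of the nonlinearity (good $\jb{y}^{-k}$ weights for everything except \eqref{eq:potdanger}, whose null/divergence structure is fed into the $\partial_s F$ estimates \eqref{eq:plainenergy4}, \eqref{eq:weightedenergy4}) coincide in substance with Sections \ref{sec:boot}--\ref{sec:dispersive} of the paper, and your diagnosis of the delicate point (non-commutation of $\Ga_{1,2}$ with $\mathcal{L}$) is accurate — though at sketch level you do not address how those cases are actually closed (the splitting of $\Ga_1\tpsi$ via the explicit fundamental system in Lemma \ref{lem:subtle}, integration by parts at top order, and the characteristics parametrix of Lemma \ref{lem:aproxparametrix} for top-order local energy decay). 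The genuine divergence from the paper, and the place where your argument has a gap, is the selection of $a$.

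Your map $\mathcal{N}(a)$ is defined through $\int_0^\infty e^{-k_d s}\langle g_d,\jb{y}^{1/2}F(s,\cdot)\rangle\,ds$ evaluated along the flow with shooting parameter $a$; for this to be defined, let alone contractive, you need global existence and the bootstrap decay for \emph{every} $a$ in the interval, and for differences of such solutions — which is precisely what fails for all but the value you are trying to construct: for generic $a$ the unstable mode grows exponentially and, the equation being quasilinear, the solution leaves the small-data regime (or breaks down) in finite time. So ``comparing two solutions of the coupled system and running the same bootstrap on their difference'' is circular as written. Two standard repairs exist: recast everything as a joint fixed point for $(\psi,h,a)$ in a space of globally decaying functions, with $h$ given by the stable representation (unstable component integrated backwards from $t=\infty$) so that decay holds by fiat for every iterate; or do what the paper does — work on a finite interval $[0,T]$, define $b$ through a bootstrap-respecting extension $\underline{\phi}$, show by the open/disjoint-sets connectedness argument of Lemma \ref{lemma:existencea} that some $a^{(T)}\in[-\eps^{3/2},\eps^{3/2}]$ keeps the unstable amplitude small up to time $T$, and pass to the limit $T\to\infty$ by compactness, proving Lipschitz dependence of $a$ only a posteriori via a difference estimate between two already-global solutions. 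The paper's route needs no contraction and hence no difference estimates at the existence stage (a real advantage in the quasilinear setting, where such estimates lose derivatives); your route, once properly set up as a joint fixed point, would give existence, uniqueness and Lipschitz continuity in one stroke, but the version you wrote does not yet do this. (Minor: with the paper's conventions the projected ODE is $-h''+k_d^2h=\langle\jb{y}^{1/2}F,g_d\rangle$, so your source term carries the wrong sign; this only flips the sign in the vanishing-bracket condition.)
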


\vspace{0.2cm}

An interesting open problem is the description of the flow in the neighborhood 
of the codimension 1 manifold of Theorem \ref{thm:Main}, and in particular whether this 
manifold is a threshold between two different types of stable regimes. 
An analogous problem has been studied in \cite{MeRaSz} in the case of 
the $L^2$ critical nonlinear Schr\"odinger equation. The initial data corresponding to 
Bourgain-Wang solutions (which are expected to form a co-dimension one
manifold \cite{KrS1}), are 
shown to lie at the boundary between solutions blowing up in finite time in the log-log regime  
and solutions scattering to 0 (note that both are known to be stable regimes for that equation). 
 Numerical simulations for the extremal surface equation suggest that a similar behavior might 
 take place here. Indeed, the codimension 1 manifold of Theorem \ref{thm:Main} seems to be the threshold 
 between two types of regimes: one leading to a collapse of the 
collar ($\phi \to - \jb{y}^2$ for some 
$\abs{y} \ll 1$ and the solution ceases to be an immersed submanifold), 
and another
leading to the accelerated widening of the collar region. 


\section{Setting up the analysis}\label{sec:boot}


The aim of this section is to set up the bootstrap argument. 


\subsection{Spectral decomposition of the solution}


We decompose our solution $\tphi$ as 
$$\tphi=h(t)g_d+\tpsi$$
so that $\tpsi$ satisfies
$$\langle \tpsi, g_d\rangle=0.$$
Thus, we have
$$P_d\tphi=h(t)g_d,\,\, P_c\tphi=\tpsi.$$
In particular, $\tpsi$ satisfies in view of \eqref{eq:Maintilde}
\begin{equation}\label{eq:tpsi}
\begin{cases}
-\partial_t^2\tilde{\psi} + \partial_y^2\tilde{\psi} +
\frac{1}{2}\frac{3+\frac{y^2}{2}}{(1+y^2)^2}\tilde{\psi} =
P_c((1+y^2)^{\frac{1}{4}}F(y,\phi, \nabla\phi, \nabla^2\phi)),\\
\tpsi(0,.)=P_c\tphi_1,\,\, \pr_t\tpsi(0,.)=P_c\tphi_2.
\end{cases}
\end{equation}
We derive a formula for $h(t)$ in the following lemma.
\begin{lem}\label{lemma:formluah}
$h(t)$ is given by
\begin{eqnarray*}
&&h(t)\\
&=& \frac{1}{2}\left(a+\langle \tphi_1, g_d\rangle+\frac{\langle \tphi_2, g_d\rangle}{k_d}-\frac{1}{k_d}\int_0^t\langle (1+y^2)^{\frac{1}{4}}F(\phi, \nabla\phi, \nabla^2\phi)(s), g_d\rangle e^{-k_ds}ds\right)e^{k_dt}\nonumber\\
&+&\frac{1}{2}\left(a+\langle \tphi_1, g_d\rangle-\frac{\langle \tphi_2, g_d\rangle}{k_d}+\frac{1}{k_d}\int_0^t\langle (1+y^2)^{\frac{1}{4}}F(\phi, \nabla\phi, \nabla^2\phi)(s), g_d\rangle 
e^{k_ds}ds\right)e^{-k_dt}.\nonumber
\end{eqnarray*}
\end{lem}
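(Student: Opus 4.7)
The plan is to project the equation \eqref{eq:Maintilde} onto the discrete mode $g_d$ and thereby reduce to a second-order linear ODE for $h(t)$, which can then be solved explicitly by variation of parameters.

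First, I would apply the projector $P_d$, or equivalently take the $L^2_{dy}$ inner product with $g_d$ (normalized so that $\|g_d\|_{L^2} = 1$), to both sides of \eqref{eq:Maintilde}. Since $\tphi = h(t) g_d + \tpsi$ with $\langle \tpsi, g_d\rangle = 0$, we have $\langle \tphi, g_d\rangle = h(t)$ and $\langle \partial_t^2 \tphi, g_d\rangle = \ddot h(t)$. Recalling that $g_d$ is an eigenfunction of $\mathcal{L} = -\partial_y^2 - \frac{6+y^2}{4\jb{y}^4}$ with eigenvalue $-k_d^2$, i.e.\ $\partial_y^2 g_d + \frac{6+y^2}{4\jb{y}^4} g_d = k_d^2 g_d$, self-adjointness gives
\[
\Big\langle \partial_y^2 \tphi + \tfrac{6+y^2}{4\jb{y}^4}\tphi,\, g_d\Big\rangle = k_d^2\, h(t).
\]
Thus the projection of \eqref{eq:Maintilde} yields the scalar ODE
\[
\ddot h(t) - k_d^2\, h(t) = -\big\langle \jb{y}^{1/2} F(\phi,\nabla\phi,\nabla^2\phi)(t,\cdot),\, g_d\big\rangle,
\]
which after noting that $\jb{y}^{1/2} = (1+y^2)^{1/4}$ is exactly the inhomogeneous linear ODE for $h$.

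Next I would write down the initial conditions. From the decomposition together with the prescribed data $(\tphi(0,\cdot), \partial_t\tphi(0,\cdot)) = (\tphi_1 + a g_d, \tphi_2)$, one reads off
\[
h(0) = \langle \tphi_1, g_d\rangle + a, \qquad \dot h(0) = \langle \tphi_2, g_d\rangle.
\]
The general solution to $\ddot h - k_d^2 h = f$ with these initial data is, by variation of parameters on the fundamental pair $\{e^{k_d t}, e^{-k_d t}\}$,
\[
h(t) = \cosh(k_d t)\, h(0) + \frac{\sinh(k_d t)}{k_d}\,\dot h(0) + \int_0^t \frac{\sinh\!\big(k_d(t-s)\big)}{k_d}\, f(s)\, ds,
\]
with $f(s) = -\big\langle (1+y^2)^{1/4} F(s), g_d\big\rangle$.

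Finally, I would expand $\cosh$ and $\sinh$ into exponentials and regroup by $e^{k_d t}$ and $e^{-k_d t}$. Doing this yields precisely the two bracketed coefficients stated in the lemma, with the stable coefficient of $e^{-k_d t}$ containing $+\frac{1}{k_d}\int_0^t \langle\cdot\rangle e^{k_d s}\, ds$ and the unstable coefficient of $e^{k_d t}$ containing $-\frac{1}{k_d}\int_0^t \langle\cdot\rangle e^{-k_d s}\, ds$. There is no real obstacle here: the argument is an entirely linear ODE computation, the only small point of care being the correct sign of the eigenvalue equation for $g_d$ (i.e.\ $\mathcal{L} g_d = -k_d^2 g_d$, not $+k_d^2$, since $g_d$ is the ground state lying below the continuous spectrum) and the normalization $\|g_d\|_{L^2} = 1$ implicit in writing $P_d \tphi = \langle \tphi, g_d\rangle\, g_d$.
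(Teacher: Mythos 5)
Your proof is correct and follows essentially the same route as the paper: project \eqref{eq:Maintilde} onto $g_d$ using $\mathcal{L}g_d=-k_d^2 g_d$ and self-adjointness to get $-h''+k_d^2h=\langle (1+y^2)^{1/4}F, g_d\rangle$ with $h(0)=a+\langle\tphi_1,g_d\rangle$, $h'(0)=\langle\tphi_2,g_d\rangle$, then solve by variation of constants and regroup the exponentials. The only cosmetic difference is that you write the solution via the $\cosh/\sinh$ Duhamel formula before expanding, while the paper fixes the constants $A_1,A_2$ from the initial data afterwards; the signs and coefficients you obtain match the lemma exactly.
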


\begin{proof}
$h(t)$ satisfies in view of \eqref{eq:Maintilde} and the fact that
$g_d$ is en eigenvector of $\mathcal{L}$ with eigenvalue $-k_d^2$:
$$-h''(t) + k_d^2h(t) = \langle (1+y^2)^{\frac{1}{4}}F(\phi, \nabla\phi, \nabla^2\phi), g_d\rangle.$$
Using the variation of constant methods, we deduce
\begin{eqnarray*}
h(t)&=& \left(A_1-\frac{1}{2k_d}\int_0^t\langle (1+y^2)^{\frac{1}{4}}F(\phi, \nabla\phi, \nabla^2\phi)(s), g_d\rangle e^{-k_ds}ds\right)e^{k_dt}\nonumber\\
&&+\left(A_2+\frac{1}{2k_d}\int_0^t\langle (1+y^2)^{\frac{1}{4}}F(\phi, \nabla\phi, \nabla^2\phi)(s), g_d\rangle e^{k_ds}ds\right)e^{-k_dt}.\nonumber
\end{eqnarray*}
Since we have
$$h(0)=a+\langle \tphi_1, g_d\rangle,\,\, h'(0)=\langle \tphi_2, g_d\rangle,$$
we deduce
$$A_1=\frac{1}{2}\left(a+\langle \tphi_1, g_d\rangle+\frac{\langle \tphi_2, g_d\rangle}{k_d}\right)\textrm{ and }A_2=\frac{1}{2}\left(a+\langle \tphi_1, g_d\rangle-\frac{\langle \tphi_2, g_d\rangle}{k_d}\right).$$
This concludes the proof of the lemma.
\end{proof}


\subsection{Setting up the bootstrap}


Consider a time $T>0$ such that the following bootstrap assumptions hold on $[0,T)$:
\begin{equation}\label{eq:ener1boot}
\|\nabla_{t,y}\nabla_{t,y}^{\alpha}\tilde{\phi}\|_{L^2_{dy}}\leq \eps\langle t\rangle^{\nu},\,0\leq |\alpha|\leq N_1,
\end{equation}
\begin{equation}\label{eq:disp1boot}
\|\nabla_{t,y}^{\beta}\phi\|_{L^\infty_{dy}}\leq \eps\langle t\rangle^{-\frac{1}{2}},\,0\leq |\beta|\leq \frac{N_1}{2} + C,
\end{equation}
\begin{equation}\label{eq:disp2boot}
\|\langle y\rangle^{-\frac{1}{2}}\nabla_{t,y}^{\beta}\phi\|_{L^\infty_{dy}}\leq \eps\langle t\rangle^{-\frac{1}{2}-\delta_1},\,0\leq |\beta|\leq \frac{N_1}{2} + C,
\end{equation}
\begin{equation}\label{eq:ener2boot}
\|\nabla_{t,y}\nabla_{t,y}^{\beta}\Gamma_2^{\gamma}\tilde{\phi}\|_{L^2_{dy}}\leq \eps\langle t\rangle^{([\frac{2|\beta|}{N_1}]+1)10^\gamma\nu},\,0\leq |\beta|\leq N_1-\gamma,\,0\leq \gamma\leq 2,
\end{equation}
\begin{equation}\label{eq:localenerboot}\begin{split}
&\|\langle y\log y\rangle^{-1}\big(\nabla_{t,y}^{\beta}\Gamma_2^{\gamma}\tilde{\phi})\|_{L^2_{t,y}([0,T])}\leq \eps \langle T\rangle^{(\chi_{\gamma>0}[\frac{2|\beta|}{N_1}]+1)10^\gamma\nu},\\&\hspace{6cm} 0\leq |\beta|\leq 1+N_1-\gamma,\,0\leq \gamma\leq 2,
\end{split}\end{equation}
\begin{equation}\label{eq:unstableboundboot}\begin{split}
&\sum_{\beta\leq N_1+1}|\partial_t^{\beta}h(t)|\leq \eps \langle t\rangle^{-1-2\delta_1},\\
&\sum_{\beta+\kappa\leq N_1+1}|\partial_t^{\beta}(t\partial_t)^{\kappa}h(t)|\leq \eps \langle t\rangle^{(1+[\frac{2|\beta|}{N_1}])10^{\kappa}\nu},\,\kappa\in \{1,2\},\\
&\sum_{\beta+\kappa\leq N_1+1}\big\|\partial_t^{\beta}(t\partial_t)^{\kappa}h\big\|_{L^2_{[0,T]}}\leq \eps\langle T\rangle^{(1+[\frac{2|\beta|}{N_1}])10^{\kappa}\nu},\,\kappa\in \{1,2\}.\\
\end{split}\end{equation}
Our claim is that the above regime is trapped.

\begin{prop}[Improvement of the bootstrap assumptions]\label{prop:Core} 
 There exists an $N_1$ sufficiently large, such that the following holds: there is $N_0$ sufficiently large, such that if $N_1\gg C\geq 10$ and given $\eps>0$, $1\gg \delta_1\gg\nu\gg\eps$, there is  $\delta_0 = \delta_0(\eps, N_0)>0$ sufficiently small (as in Theorem~\ref{thm:Main}) and 
$$a\in [-\eps^{\frac{3}{2}},\, \eps^{\frac{3}{2}}]$$
 such that $\tilde{\phi}$ satisfies the following bounds 
\begin{equation}\label{eq:ener1}
\|\nabla_{t,y}\nabla_{t,y}^{\alpha}\tilde{\phi}\|_{L^2_{dy}}\lesssim (\delta_0+\eps^{\frac{3}{2}})\langle t\rangle^{\nu},\,0\leq |\alpha|\leq N_1,
\end{equation}
\begin{equation}\label{eq:disp1}
\|\nabla_{t,y}^{\beta}\phi\|_{L^\infty_{dy}}\lesssim (\delta_0+\eps^{\frac{3}{2}})\langle t\rangle^{-\frac{1}{2}},\,0\leq |\beta|\leq \frac{N_1}{2} + C,
\end{equation}
\begin{equation}\label{eq:disp2}
\|\langle y\rangle^{-\frac{1}{2}}\nabla_{t,y}^{\beta}\phi\|_{L^\infty_{dy}}\lesssim (\delta_0+\eps^{\frac{3}{2}})\langle t\rangle^{-\frac{1}{2}-\delta_1},\,0\leq |\beta|\leq \frac{N_1}{2} + C,
\end{equation}
\begin{equation}\label{eq:ener2}
\|\nabla_{t,y}\nabla_{t,y}^{\beta}\Gamma_2^{\gamma}\tilde{\phi}\|_{L^2_{dy}}\lesssim (\delta_0+\eps^{\frac{3}{2}})\langle t\rangle^{([\frac{2|\beta|}{N_1}]+1)10^\gamma\nu},\,0\leq |\beta|\leq N_1-\gamma,\,0\leq \gamma\leq 2,
\end{equation}
\begin{equation}\label{eq:localener}\begin{split}
&\|\langle y\log y\rangle^{-1}\big(\nabla_{t,y}^{\beta}\Gamma_2^{\gamma}\tilde{\phi})\|_{L^2_{t,y}([0,T])}\lesssim (\delta_0+\eps^{\frac{3}{2}}) \langle T\rangle^{(\chi_{\gamma>0}[\frac{2|\beta|}{N_1}]+1)10^\gamma\nu},\\&\hspace{6cm} 0\leq |\beta|\leq 1+N_1-\gamma,\,0\leq \gamma\leq 2,
\end{split}\end{equation}
\begin{equation}\label{eq:unstablebound}\begin{split}
&\sum_{\beta\leq N_1+1}|\partial_t^{\beta}h(t)|\lesssim (\delta_0+\eps^{\frac{3}{2}}) \langle t\rangle^{-1-2\delta_1},\\
&\sum_{\beta+\kappa\leq N_1+1}|\partial_t^{\beta}(t\partial_t)^{\kappa}h(t)|\lesssim (\delta_0+\eps^{\frac{3}{2}}) \langle t\rangle^{(1+[\frac{2|\beta|}{N_1}])10^{\kappa}\nu},\,\kappa\in \{1,2\},\\
&\sum_{\beta+\kappa\leq N_1+1}\big\|\partial_t^{\beta}(t\partial_t)^{\kappa}h\big\|_{L^2_{[0,T]}}\lesssim (\delta_0+\eps^{\frac{3}{2}})\langle T\rangle^{(1+[\frac{2|\beta|}{N_1}])10^{\kappa}\nu},\,\kappa\in \{1,2\}.\\
\end{split}\end{equation}
\end{prop}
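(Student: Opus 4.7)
The plan is to split the analysis along the spectral decomposition $\tphi=h(t)g_d+\tpsi$. For any admissible $a$ we first improve the bootstrap bounds on the continuous part $\tpsi$ and on the pointwise quantities derived from $\phi=\langle y\rangle^{-1/2}\tphi$, essentially independently of $a$; we then select $a$ by a fixed-point argument rooted in Lemma \ref{lemma:formluah} to secure the decay of $h(t)$ demanded by the first line of \eqref{eq:unstablebound}. The workhorses are Duhamel's formula for \eqref{eq:tpsi}, Lemma \ref{lemma:formluah}, and the linear estimates of Propositions \ref{eq:keyest1}--\ref{eq:keyest3}; the exponential localization of $g_d$ means the contribution of $h(t)g_d$ to every $L^2$ and pointwise bound on $\tphi$ is negligible once the bootstrap bound on $h$ is in hand.

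For the bounds on $\tpsi$: apply Duhamel to \eqref{eq:tpsi}. The homogeneous part is bounded by $\delta_0$ via Propositions \ref{eq:keyest1}--\ref{eq:keyest3} together with the hypothesis $\Vert \tphi\Vert_{X_0}\leq \delta_0$. For the source $P_c[\langle y\rangle^{1/2}F]$ we split $F$ as in \eqref{eqs:mainQuasi}--\eqref{eqs:mainSemi}. Every piece except the unweighted cubic in $Q_3$ carries a localizing weight $\langle y\rangle^{-k}$ with $k\geq 2$; pairing the pointwise bootstrap bounds \eqref{eq:disp1boot}--\eqref{eq:disp2boot} with the energies \eqref{eq:ener1boot}, \eqref{eq:ener2boot} in the spirit of Section \ref{subsec:nonlin} yields $L^1_tL^2_y$ control with time-integrability to spare, and \eqref{eq:plainenergy1}--\eqref{eq:weightedenergy2} then close \eqref{eq:ener1}, \eqref{eq:ener2}. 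The unweighted cubic in $Q_3$ is rewritten as a $(t,y)$-divergence via the null-form identity of Section \ref{subsec:nonlin}, plus a residual $3(\pr_t\phi)^2(\pr_{yy}^2-\pr_{tt}^2)\phi$ which after substituting \eqref{eq:Main} becomes quintic-or-better with adequate weights; the divergence form is absorbed through the gradient-source estimates \eqref{eq:plainenergy3}--\eqref{eq:weightedenergy4}, avoiding any $\sqrt{\mathcal L}^{-1}$-loss on a non-decaying source. The $\Gamma_2$-versions are obtained by commuting $\Gamma_2$ through the nonlinearity (the coefficients $\langle y\rangle^{-k}$ absorb $\Gamma_2$ without loss of weight), while $\Gamma_1$-derivatives are recovered from $\Gamma_2$-derivatives via the equation as described at the end of Section \ref{sect:linear}. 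The local-energy bound \eqref{eq:localener} comes from Proposition \ref{eq:keyest3} on the same inputs, and the pointwise bounds \eqref{eq:disp1}--\eqref{eq:disp2} from the $L^\infty$ dispersive estimate of Proposition \ref{eq:keyest1} applied to the Duhamel representation, with the $L^1_y$-norm of the source controlled through its localizing weights.

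For the unstable mode: by Lemma \ref{lemma:formluah} the coefficient of $e^{k_d t}$ in $h(t)$ is
\begin{equation*}
\mathcal A(a):=a+\langle \tphi_1,g_d\rangle+\frac{\langle \tphi_2,g_d\rangle}{k_d}-\frac{1}{k_d}\int_0^\infty\langle \langle y\rangle^{1/2}F(s),g_d\rangle e^{-k_d s}\,ds,
\end{equation*}
with $F$ depending on $a$ through the solution. The integral converges absolutely because $g_d$ is exponentially localized and $F$ grows at most polynomially under the bootstrap, and is of size $O(\delta_0^2+\eps^2)$ by the quadratic-or-higher structure of $F$; hence $\mathcal A$ is Lipschitz in $a$ with derivative $1+O(\eps)$ and admits a unique zero $a\in[-\eps^{3/2},\eps^{3/2}]$. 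For this $a$ the formula in Lemma \ref{lemma:formluah} collapses to an integral of $e^{-k_d|t-s|}$ against a source pointwise bounded by $\eps^2\langle s\rangle^{-1-2\delta_1}$, yielding $|h(t)|\lesssim \eps^{3/2}\langle t\rangle^{-1-2\delta_1}$ and closing the first line of \eqref{eq:unstablebound}; derivative and $\Gamma_2$-bounds follow by differentiating the formula and rerunning the same estimates. The main difficulty is precisely this selection step: it is a global-in-time condition, yet the definition of $\mathcal A$ presupposes that the bootstrap has already been closed with the chosen $a$. The standard resolution, to be carried out in Section \ref{sec:existencea}, is a continuation/Brouwer-type argument — for each candidate $a$ one runs the bootstrap up to an exit time $T_a\leq +\infty$, shows that the set of $a$ leading to exponential growth of one sign versus the other is open, and extracts the borderline value for which $T_a=+\infty$.
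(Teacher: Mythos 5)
Your outline reproduces the paper's broad architecture (spectral splitting, Duhamel plus Propositions \ref{eq:keyest1}--\ref{eq:keyest3}, null/divergence structure for $Q_3$, and a shooting-type selection of $a$ -- your final ``open sets of sign-definite growth'' description is essentially Lemma \ref{lemma:existencea}, and is the right fix for the circular Lipschitz/fixed-point argument you first propose, which is not available before global existence is known). However, there are two genuine gaps where the plan as written would fail. First, the problem is quasilinear, so the source $G$ contains two derivatives of $\tilde\phi$: at the top order ($|\alpha|=N_1$ in \eqref{eq:ener1}, \eqref{eq:ener2}, and $|\beta|+\gamma=N_1+1$ in \eqref{eq:localener}) the Duhamel-plus-linear-estimates scheme loses a derivative and cannot close. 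The paper instead runs a direct energy identity with integration by parts that exploits the quasilinear structure (moving the top-order coefficients $\jb{y}^{-2}\phi+(\nabla\phi)^2$ onto boundary terms, and treating the $P_d$-component through the equation), and for the top-order local energy decay even this fails: there the paper constructs an approximate parametrix along the characteristics of the perturbed principal symbol (Lemmas \ref{lem:aproxparametrix} and \ref{lem:5}) and sums a Neumann-type series. Your proposal contains no mechanism for this derivative loss.

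Second, your derivation of the pointwise bounds \eqref{eq:disp1}--\eqref{eq:disp2} from the weighted $L^\infty$ decay of Proposition \ref{eq:keyest1} ``with the $L^1_y$-norm of the source controlled through its localizing weights'' does not cover the unweighted cubic terms in $Q_3$, which are precisely the dangerous ones: in the wave zone $y\sim t$ the pointwise bootstrap bounds only give
\begin{equation*}
\bigl\| \jb{y}^{\frac12+}\,(\nabla_{t,y}\phi)^2\nabla_{t,y}^2\phi \bigr\|_{L^1_y} \lesssim \eps^3\, t^{-\frac32}\cdot t^{\frac32+} = \eps^3\, t^{0+},
\end{equation*}
which is not integrable in time, so the Duhamel estimate does not close; moreover the gradient-source estimates \eqref{eq:plainenergy3}--\eqref{eq:weightedenergy4} have no analogue for the weighted $L^\infty$ decay, so the divergence rewriting you use for the energies does not transfer. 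This is why the paper's Section \ref{sec:dispersive} rewrites the null form as a ``nested double null structure'' in terms of $\Gamma_1,\Gamma_2$ divided by $t^2-y^2$, invokes the splittings of Lemma \ref{lem:subtle} and Remark \ref{rem:1}, and decomposes into the regions $y\ll t$, $y\sim t$ (with a further Littlewood--Paley frequency splitting and a $|t-y|\gtrless t^{-\delta_2}$ dichotomy near the cone), and $y\gg t$. Without some substitute for this analysis, \eqref{eq:disp1}--\eqref{eq:disp2} -- and with them the time-integrability used throughout your energy argument -- are not established.
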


\vspace{0.3cm}

The rest of the paper is as follows. In section
\ref{sec:energybounds}, we prove the energy bounds  \eqref{eq:ener1}
and \eqref{eq:ener2}. In section \ref{sec:localenergydecay}, we prove
the local energy decay \eqref{eq:localener}. In section
\ref{sec:dispersive}, we prove the decay estimates \eqref{eq:disp1} and \eqref{eq:disp2}. In section \ref{sec:existencea}, we prove the existence of $a$ such that \eqref{eq:unstablebound} holds which concludes the proof of Proposition~\ref{prop:Core}. Finally, we prove Theorem \ref{thm:Main} in section \ref{sec:proofmaintheorem}.


\section{Energy bounds}\label{sec:energybounds}


The goal of this section is to prove the estimates \eqref{eq:ener1} and \eqref{eq:ener2}.


\subsection{The proof of the estimate (\protect\ref*{eq:ener1})}


In view of \eqref{eq:tpsi}, we have
\begin{equation}\label{eq:psiparam}
\tilde{\psi} = \cos(t\sqrt{\mathcal{L}})P_c\tilde{\phi}_1 + \frac{\sin(t\sqrt{\mathcal{L}})}{\sqrt{\mathcal{L}}}P_c\tilde{\phi}_2 + \int_0^t \frac{\sin([t-s]\sqrt{\mathcal{L}})}{\sqrt{\mathcal{L}}}P_c\big(G(s, \cdot)\big)\,ds
\end{equation}
where 
\[
G(s, y) = (1+y^2)^{\frac{1}{4}}F(y,\phi , \nabla\phi ,\nabla^2\phi).
\]
In order to derive the desired energy bounds, we can use Proposition~\ref{eq:keyest1} for the weighted terms without maximum order derivatives, and Proposition~\ref{eq:keyest2} for the pure cubic terms, as we shall see. In order to deal with the maximum order derivative terms, we have to use a direct integration by parts argument. 
To begin with, we reveal the gradient structure in the top
order cubic terms. One can check easily 
that the following identity holds 
\begin{multline}\label{eq:keygradientstructure2}
\partial_t\big[\phi_t^2\psi_{t}\big] -
2\partial_y\big[\phi_{y}\phi_t\psi_t\big] +
\partial_t\big[\phi_y^2\psi_t\big] + \phi_t^2(\psi_{yy} - \psi_{tt})+
2(\phi_{yy}-\phi_{tt})\phi_t\psi_t\\
= \phi_t^2 \psi_{yy} - 2\phi_y\phi_t\psi_{ty} + \phi_y^2\psi_{tt}
\end{multline}
Denote 
\[
X_{t,y}(\nabla\phi,\nabla\psi) = X_{t,y}(\nabla \phi,\partial_t \psi) : = \partial_t\big[\phi_t^2\psi_t\big] -
2\partial_y\big[\phi_{y}\phi_t\psi_t\big] +
\partial_t\big[\phi_y^2\psi_t\big]. 
\]
Note that $X_{t,y}$ is linear in its second argument. 

In order to recover the bounds for $\tilde{\psi}$, we then distinguish between the following three cases: 

\subsubsection{First order derivatives} 
Examining the wave equation \eqref{eq:tpsi}, we can split the
right-hand side into two parts writing
\[ \jb{y}^{\frac12} F(y,\phi,\nabla\phi,\nabla^2\phi) =
\jb{y}^{\frac12} F_1(y,\phi,\nabla\phi,\nabla^2\phi) +
X_{t,y}(\nabla\phi,\nabla\tilde{\phi}).\]
Then applying Propositions \ref{eq:keyest1} and \ref{eq:keyest2} to
the wave equation \eqref{eq:tpsi} in view of the splitting above, we obtain 
\begin{multline}\label{eq:lowerorderder1}
\sup_{t\in [0,T]}\|\nabla_{t,y}\tilde{\psi}\|_{L^2_{dy}}\lesssim
\|\langle\nabla_y\rangle\tilde{\phi}_1\|_{L^2_{dy}} +
\|\tilde{\phi}_2\|_{L^2_{dy} \cap L^1_{\langle y\rangle^{\eps_*}dy}}
\\
+ \big\| F_1\big\|_{L_t^1 L^2_{\jb{y}dy}\cap L_t^1
L^1_{\jb{y}^{\smash{\frac12+\eps_*}}dy}[0,T]}
+\sum_{k=1}^3\big\|A_k\big\|_{L_t^1 L^2_{ dy}[0,T]}
\end{multline}
where 
\[
A_1 = ( \pr_t\phi  )^2\pr_t\tphi,\quad A_2 =  \pr_y\phi   \pr_t\phi
\pr_t\tilde{\phi},\quad A_3 = ( \pr_y\phi  )^2\pr_t\tilde{\phi}
\]
come from $X_{t,y}(\nabla\phi,\nabla\tilde{\phi})$. 

From our assumptions on $\tilde{\phi}_{1,2}$, we have
\begin{equation}\label{eq:lowerorderder2}
\|\langle\nabla_y\rangle\tilde{\phi}_1\|_{L^2_{dy}} +    \|\tilde{\phi}_2\|_{L^2_{dy} \cap L^1_{\langle y\rangle^{\eps_*}dy}}\leq\delta_0.
\end{equation}
The contributions from the terms $A_k$ are also straightforward to control. Using the bootstrap assumptions \eqref{eq:ener1boot} and \eqref{eq:disp1boot}, we have
\begin{multline}\label{eq:lowerorderder3}
\big\|(\nabla_{t,y}\phi  )^2\nabla_{t,y}\tilde{\phi}\big\|_{L_t^1 L^2_{dy}[0,T]}
\lesssim \big\|\langle t\rangle^{\nu}|\nabla_{t,y}\phi  |^2\big\|_{L_{t}^1 L^\infty_{dy}[0,T]}\big\|\langle t\rangle^{-\nu}\nabla_{t,y}\tphi\big\|_{L_t^\infty L^2_{dy}}\\
\lesssim \nu^{-1}\eps^3\langle T\rangle^{\nu}\lesssim \eps^2\langle T\rangle^{\nu}.
\end{multline}
It remains to deal with the more complicated source term $F_1$. We
observe that $F_1$ can be decomposed (see \eqref{eq:Main},
\eqref{eqs:mainQuasi}, and \eqref{eqs:mainSemi}) as 
\begin{multline*} 
F_1 = \underbrace{Q_2 + Q_4 + S_2 + S_3 + S_4 +
\frac{\phi^2}{\jb{y}^4}\phi_{yy}}_{\text{with good } y \text{
weights}} \\
+ (\phi_t)^2 \phi_{yy} - 2 \phi_t \phi_y \phi_{ty} + (\phi_y)^2
\phi_{tt} - \jb{y}^{-\frac12} X_{t,y}(\nabla \phi, \jb{y}^{\frac12}
\partial_t \phi).
\end{multline*}

We easily see that using the bootstrap assumption
\eqref{eq:disp1boot} and \eqref{eq:disp2boot}
 the terms with the good $y$ weights are bounded
pointwise
\begin{multline*} 
\jb{y}^{\frac12} \left| Q_2 + Q_4 + S_2 + S_3 + S_4 +
\frac{\phi^2}{\jb{y}^4}\phi_{yy} \right| \\
\lesssim \jb{y}^{-\frac32}
\left[ |\jb{\nabla_{t,y}}\phi|^2 + |\jb{\nabla_{t,y}}^2\phi|^2\right]
\lesssim \jb{y}^{-1} \eps^2 \jb{t}^{-1-\delta_1}
\end{multline*}
Integrating in $L^1_t (L^2_{dy} \cap L^1_{\jb{y}^{\eps_*} dy})$ its
contribution to $\|F_1\|_{L^1_t (L^2_{\jb{y}dy}\cap
L^1_{\jb{y}^{\smash{\frac12 + \eps_*}} dy})}$ can be bounded by
$\eps^{\frac32}$. 

Using \eqref{eq:keygradientstructure2}, we can rewrite
\begin{multline*}
\jb{y}^{\frac12} \left[ (\phi_t)^2 \phi_{yy} - 2 \phi_t \phi_y
\phi_{ty} + (\phi_y)^2
\phi_{tt}\right] - X_{t,y}(\nabla \phi, \nabla \tilde{\phi}) \\
= (\phi_t)^2 \left[ \jb{y}^{\frac12} \phi_{yy} -
\tilde{\phi}_{yy}\right] - 2 \phi_t \phi_y \left[ \jb{y}^{\frac12}
\phi_{ty} - \tilde{\phi}_{ty}\right] \\
+ \phi_t^2( \tilde{\phi}_{yy} - \tilde{\phi}_{tt}) + 2 \phi_t
\tilde{\phi}_t (\phi_{yy} - \phi_{tt}).
\end{multline*}
The first two terms exhibit an important cancellation:
\[ (\phi_t)^2 \left[ \jb{y}^{\frac12} \phi_{yy} -
\tilde{\phi}_{yy}\right] - 2 \phi_t \phi_y \left[ \jb{y}^{\frac12}
\phi_{ty} - \tilde{\phi}_{ty}\right] = 
- (\phi_t)^2\phi \partial^2_{yy}(\jb{y}^{\frac12}) 
\]
which has a good $y$ weight of order $\jb{y}^{-\frac32}$, 
allowing us to estimate it exactly as above. For the remaining two
terms we apply the equation. The wave equation \eqref{eq:Maintilde}
gives the crude pointwise bound
\[ |\tilde{\phi}_{yy} - \tilde{\phi}_{tt}| \lesssim \jb{y}^{-2} \left(
|\tilde{\phi}| + |\jb{\nabla_{t,y}}^2\phi|\cdot|\jb{\nabla_{t,y}}^2
\tilde{\phi}|\right) +
|\nabla_{t,y}\phi|^2|\nabla^2_{t,y}\tilde{\phi}|.\]
The first term gives rise to a term of good $y$ weight which can be
estimated like above, and the second term can be controlled by 
\[
\jb{y}^{-\frac12}|\partial_t\tilde{\phi}||\nabla_{t,y}\phi|^3|\nabla_{t,y}^2\tilde{\phi}|
\]
whose norm in $L^1_t (L^2_{dy} \cap L^1_{\jb{y}^{\eps_*} dy})$ is
easily controlled by $\eps^4$ using the bootstrap assumptions
\eqref{eq:ener1boot} and \eqref{eq:disp1boot}. The term with
$|\phi_{yy} - \phi_{tt}|$ can be treated similarly using
\eqref{eq:Main}. Summarizing, we have
shown that  
\begin{equation}\label{eq:lowerorderder4}
 \big\| F_1\big\|_{L_t^1 L^2_{\jb{y}dy}\cap L_t^1
L^1_{\jb{y}^{\smash{\frac{1}{2}+\eps_*}}dy}[0,T]}\lesssim \eps^{\frac{3}{2}}.
\end{equation}

Combining the estimates
\eqref{eq:lowerorderder1}-\eqref{eq:lowerorderder4} we obtain,
finally, 
\begin{equation}\label{eq:lowerorderder5}
\sup_{t\in [0,T]}\|\nabla_{t,y}\tilde{\psi}\|_{L^2_{dy}}\lesssim \delta_0+\eps^{\frac{3}{2}}+\eps^2\langle T\rangle^{\nu}.
\end{equation}

\subsubsection{Higher order derivatives of degree strictly less than
$N_1$}\label{sect:highermidnottop} Here we use induction on the degree of the derivatives, assuming the bound \eqref{eq:lowerorderder5}. Write the equation for $\tilde{\psi}$ schematically in the form 
\[
-\partial_t^{2}\tilde{\psi} + \partial_y^2\tilde{\psi}+ \frac{1}{2}\frac{3+\frac{y^2}{2}}{(1+y^2)^2}\tilde{\psi} = P_cG.
\]
Applying $\partial_t^{\beta}$ with $1\leq \beta\leq N_1-1$, and integrating against $\partial_t^{\beta+1}\tilde{\psi}$, we easily infer 
\begin{equation}\begin{split}\label{eq:intpar}
&\big(\int_{\R}\frac{1}{2}\big[|\partial_t^{\beta+1}\tilde{\psi}|^2 + |\partial_t^{\beta}\partial_y\tilde{\psi}|^2 - \frac{1}{2}\frac{3+\frac{y^2}{2}}{(1+y^2)^2}|\partial_t^{\beta}\tilde{\psi}|^2\big]\,dy\big)\big|_{0}^T\\
&= - \int_0^T\int_{\R} (P_c\partial_t^\beta G) \partial_t^{\beta+1}\tilde{\psi}\,dt dy.
\end{split}\end{equation}
Recall that we have 
\[
G = (1+y^2)^{\frac{1}{4}}F(\phi , \nabla\phi ,\nabla^2\phi).
\]
Note that we have the crude bound
\[
\big|\partial_t^\beta G\big|\lesssim \sum_{\beta_1 + \beta_2 = \beta}\frac{|\langle\nabla_{t,y}\rangle^2\partial_t^{\beta_1}\phi||\langle\nabla_{t,y}\rangle^2\partial_t^{\beta_2}\tphi|}{1+y^2} + \sum_{\beta_1+\beta_2+\beta_3 = \beta}\prod_{j=1}^2|\nabla_{t,y}\langle \nabla_{t,y}\rangle\partial_t^{\beta_j}\phi||\nabla_{t,y}\langle \nabla_{t,y}\rangle\partial_t^{\beta_3}\tphi|
\]
where we may assume $\beta_3\geq\beta_2\geq\beta_1$. We use the energy
bound \eqref{eq:ener1boot}, the local energy decay
\eqref{eq:localenerboot} (with $\gamma = 0$), as well as the decay
estimates \eqref{eq:disp1boot}, \eqref{eq:disp2boot}, the latter in order to deal with the logarithmic degeneracy in \eqref{eq:localenerboot}. It then follows that (using $\beta+2\leq N_1+1$)
\begin{align*}
&\langle T\rangle^{-2\nu}\big|\int_0^T\int_{\R} (P_c\partial_t^\beta G) \partial_t^{\beta+1}\tilde{\psi}\,dt dy\big|\\&\lesssim \big\|\langle t\rangle^{\frac{1}{2}+}\langle y\rangle^{-\frac{1}{2}}\langle \nabla_{t,y}\rangle^{\frac{N_1+3}{2}}\phi\big\|_{L_{t,y}^\infty}\langle T\rangle^{-\nu}\big\|\frac{\langle\nabla_{t,y}\rangle^{N_1+1}\tilde{\phi}}{1+y^\frac{3}{2}}\big\|_{L_{t,y}^2([0,T])}\big\|\langle t\rangle^{-\nu}\partial_t^{\beta+1}\tilde{\psi}\big\|_{L_t^\infty L_y^2([0,T])}\\
& + \nu^{-1}\big\|\langle t\rangle^{\frac{1}{2}}\langle \nabla_{t,y}\rangle^{\frac{N_1+3}{2}}\phi\big\|_{L_{t,y}^\infty}^2\big\|\langle t\rangle^{-\nu}\langle\nabla_{t,y}\rangle^{N_1+1}\tilde{\phi}\big\|_{L_t^\infty L_y^2([0,T])}\big\|\langle t\rangle^{-\nu}\partial_t^{\beta+1}\tilde{\psi}\big\|_{L_t^\infty L_y^2([0,T])}\\
&\lesssim \eps^3+\frac{\eps^4}{\nu}\lesssim \eps^3.
\end{align*}
This recovers the desired bound \eqref{eq:ener1} for $\partial_t^{\beta+1}\tilde{\psi}$. To get control over $\|\nabla_{t,y}^{\beta}\phi\|_{L_y^2}$, $1\leq |\beta|\leq N_1$, one uses the pure $t$-derivative bounds, the equation, and induction on the number of $y$-derivatives. 

\subsubsection{Top order derivatives}\label{sect:ener1toporder} 
Here we need to perform integration by parts in the top order derivative contributions. Again it suffices to bound the expression $\partial_t^{N_1+1}\tilde{\psi}$, as the remaining derivatives are controlled directly from the equation. Using \eqref{eq:intpar} with $\beta = N_1$, write schematically
\begin{align*}
(P_c\partial_t^{N_1} G) \partial_t^{N_1+1}\tilde{\psi}= \langle y\rangle^{-2}\phi \partial_t^{N_1}\nabla_{t,y}^2\tphi\partial_t^{N_1+1}\tilde{\psi}+(\nabla_{t,y}\phi )^2\partial_t^{N_1}\nabla_{t,y}^2\tphi\partial_t^{N_1+1}\tilde{\psi} +l.o.t.
\end{align*}
where the contribution of the lower order terms is treated as in
Section \ref{sect:highermidnottop} above. We conclude that 
\begin{align*}
&\int_0^T\int_{\R} (P_c\partial_t^{N_1} G) \partial_t^{N_1+1}\tilde{\psi}\,dt dy\\& = \big(\pm\frac{1}{2}\int_{\R} \langle y\rangle^{-2}\phi |\partial_t^{N_1}\nabla_{t,y}\tilde{\psi}|^2\,dy\big)\big|_{t = 0}^{t = T}\\
&+\big(\pm\frac{1}{2}\int_{\R}(\nabla_{t,y}\phi )^2|\partial_t^{N_1}\nabla_{t,y}\tilde{\psi}|^2\,dy\big)\big|_{t = 0}^{t = T}+l.o.t.\\
&+\int_0^T\int_{\R}\big[\langle y\rangle^{-2}\phi  + (\nabla_{t,y}\phi )^2\big]\big(\partial_t^{N_1}\nabla_{t,y}^2 P_d\tilde{\phi} \big)\partial_t^{N_1+1}\tilde{\psi}\,dydt
\end{align*}
where the terms ``l.o.t.'' can be bounded like in
\ref{sect:highermidnottop}. As the first two integral expressions on the right can be bounded by 
\begin{align*}
&\big|\big(\pm\frac{1}{2}\int_{\R} \langle y\rangle^{-2}\phi |\partial_t^{N_1}\nabla_{t,y}\tilde{\psi}|^2\,dy\big)\big|_{t = 0}^{t = T}+\big(\pm\frac{1}{2}\int_{\R}(\nabla_{t,y}\phi )^2|\partial_t^{N_1}\nabla_{t,y}\tilde{\psi}|^2\,dy\big)\big|_{t = 0}^{t = T}\big|\\
&\lesssim \eps \big\|\partial_t^{N_1}\nabla_{t,y}\tilde{\psi}\big\|_{L_t^\infty L_y^2([0,T])}^2.
\end{align*}
It remains to bound the last integral expression, for which we need to
control $\partial_t^{N_1}\nabla_{t,y}^2 P_d\tilde{\phi} $. We recall that 
\[
P_d\tilde{\phi} = h(t)g_d(y)\textrm{ with }h(t)=\langle \tphi, g_d\rangle
\]
so that
\[
\partial_t^{N_1} \nabla_{t,y} \partial_y P_d \tilde{\phi} =
\nabla_{t,y} ( \partial_t^{N_1} h \cdot \partial_y g_d )
\]
which we can control using the decay and smoothness for $g_d$ together with the
bootstrap assumption \eqref{eq:unstableboundboot}. It remains to
control the term $\partial_t^{N_1} P_d \partial_t^2 \tilde{\phi}$.
Here we have to use the hyperbolic structure of the equation for
$\tilde{\phi}$. After dividing by the coefficient in front of
$\tilde{\phi}_{tt}$ the equation of motion can be re-written as
\begin{equation}\label{eq:tradettderaway}
\tilde{\phi}_{tt} = - \mathcal{L} \tilde{\phi} + H(y, \phi,
\nabla\phi, \nabla \partial_y \tilde\phi) 
\end{equation}
where the quasilinearity $H$ does not depend on $\tilde{\phi}_{tt}$.
Therefore we can write
\[ \partial_t^{N_1} P_d \partial_t^2 \tilde{\phi} = - \partial_t^{N_1}
\langle \tilde{\phi}, \mathcal{L} g_d\rangle g_d + \partial_t^{N_1}
\langle H, g_d\rangle g_d .\]
The first term we use that $g_d$ is an eigenstate of $\mathcal{L}$ and can
control using the bootstrap assumption \eqref{eq:unstableboundboot}
and decay and smoothness for $g_d$. For the second term, for the
non-top-order terms where not all $\partial_t^{N_1}$ hit on $\nabla
\partial_y \tilde{\phi}$ we can apply the bootstrap assumptions
\eqref{eq:ener1boot} and \eqref{eq:disp2boot} using again that $g_d$
decays spatially. The 
top-order term has the form (by quasilinearity) 
\[ \int_y H'(y,\phi,\nabla \phi) ~(\partial_{y} \partial^{N_1}_t \nabla_{t,y}
\tilde{\phi})~ g_d(y) dy \]
which we can integrate by parts to bound by 
\[ \int_y (\partial^{N_1}_t \nabla_{t,y} \tilde{\phi})~ \partial_y\left[
H'(y,\phi,\nabla\phi) g_d\right] dy \]
and which we can control, using the spatial decay of $g_d$, by the
bootstrap assumptions \eqref{eq:ener1boot} and \eqref{eq:disp2boot}.
Putting it all together we have that
\[
\| \partial_t^{N_1} \nabla^2_{t,y} P_d \tilde{\phi} \|_{L^2_y}
\lesssim
\eps \jb{t}^{-1 - 2 \delta_1} + \eps^2 \jb{t}^{-\frac12 -
\delta_1 + \nu}.\]
Taking $\delta_1 > \nu$ and using the additional time decay from
$\jb{y}^{-2}\phi + (\nabla_{t,y}\phi)^2$ we get 
\begin{align*}
&\big|\int_0^T\int_{\R}\big[\langle y\rangle^{-2}\phi  + (\nabla_{t,y}\phi )^2\big]\big(\partial_t^{N_1}\nabla_{t,y}^2 P_d\tilde{\phi} \big)\partial_t^{N_1+1}\tilde{\psi}\,dydt\big|\\
&\lesssim \eps^2 + \eps\big\|\partial_t^{N_1+1}\tilde{\psi}\big\|_{L_t^\infty L_y^2([0,T])}^2.
\end{align*}
Combining the preceding bounds, one easily infers the improved estimate 
\[
\big\|\nabla_{t,y}\partial_t^{N_1}\tilde{\psi}\big\|_{L_t^\infty
L_y^2([0,T])}\lesssim \delta_0+\eps^2.
\]
The remaining (mixed) derivative terms $\nabla_{t,y}^{\beta}\tilde{\psi}$, $|\beta| = N_1+1$, are bounded by induction on the number of $y$-derivatives, using the equation for $\tilde{\psi}$. This completes the proof of \eqref{eq:ener1}.


\subsection{The proof of the estimate (\protect\ref*{eq:ener2})}


We next turn to the weighted energy estimates, of the form \eqref{eq:ener2}. Here we use the weighted bounds in Propositions~\ref{eq:keyest1}, ~\ref{eq:keyest2}. The key to control the quadratic nonlinear terms shall be the local energy bounds \eqref{eq:localenerboot}. To deal with the cubic terms, we start with the following lemma, which will also be useful later on. It ensures that we get control over the Lorentz boost generator $\Gamma_1 = t\partial_y+y\partial_t$.

\begin{lem}\label{lem:3} Let $\Gamma_1: = t\partial_y + y\partial_t$. Then, we can infer the bounds
\[
\big\|\nabla_{t,y}\nabla_{t,y}^{\beta}\Gamma_1^{\kappa}\tilde{\psi}\big\|_{L^2_{dy}}\lesssim \eps \langle t\rangle^{|\kappa|-\frac{1}{2}-\delta_1},\,\kappa+|\beta|\leq N_1,\,\kappa\in \{1,2\}.
\]
\[
\big\|\nabla_{t,y}\nabla_{t,y}^{\beta}\Gamma_1\Gamma_2\tilde{\psi}\big\|_{L^2_{dy}}\lesssim \eps \langle t\rangle^{1+(1+[\frac{2|\beta|}{N_1}])10\nu},\,2+|\beta|\leq N_1.
\]
\end{lem}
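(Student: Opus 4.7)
The strategy is to derive a wave-type equation for $\Gamma_1 \tilde{\psi}$ and then apply the linear estimates of Propositions \ref{eq:keyest1}--\ref{eq:keyest2}. Since $\Gamma_1$ is a Lorentz boost, it commutes with the flat d'Alembertian $-\partial_t^2+\partial_y^2$, but not with the potential $V(y) = \tfrac{6+y^2}{4\jb{y}^4}$ of $\mathcal{L}$; the commutator is $[\Gamma_1,V] = tV'(y)$. Applying $\Gamma_1$ to the equation satisfied by $\tilde{\psi}$ yields
\[
(-\partial_t^2 + \partial_y^2 + V)(\Gamma_1 \tilde{\psi}) = \Gamma_1 P_c G - tV'(y)\,\tilde{\psi}.
\]
We decompose $\Gamma_1\tilde\psi=P_c\Gamma_1\tilde\psi+P_d\Gamma_1\tilde\psi$; the discrete part $P_d\Gamma_1\tilde\psi$ is controlled directly from the bootstrap assumption \eqref{eq:unstableboundboot} on $h$ and its derivatives together with the smoothness and spatial decay of $g_d$, while the continuous part is recovered by feeding the displayed equation into Propositions \ref{eq:keyest1} and \ref{eq:keyest2}. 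Initial data for $\Gamma_1\tilde\psi$ at $t=0$ reduces to $y\partial_t\tilde\psi(0,\cdot)$ and is absorbed by the smallness assumption on $\tilde\phi_{1,2}$.

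The two source terms are then treated as follows. The commutator $-tV'(y)\tilde\psi$ is spatially localized (since $V'(y)\lesssim\jb{y}^{-5}$) but grows linearly in $t$; writing $tV'(y)\tilde\psi=t\jb{y\log y}^{-1}\cdot\big(\jb{y\log y}V'(y)\big)\tilde\psi$, the second factor is an integrable multiplier and the local energy decay \eqref{eq:localenerboot} provides the $L^2_{t,y}$ control of $\jb{y\log y}^{-1}\tilde\phi$ up to a slow $\jb{T}^\nu$ growth, yielding bounds in a suitably weighted $L^1_t L^2_y$ space, with the logarithmic loss absorbed by $\delta_1 \gg \nu$. For the nonlinear source $\Gamma_1 P_c G$ we repeat the decomposition of Section \ref{sec:energybounds}: terms in \eqref{eqs:mainQuasi}--\eqref{eqs:mainSemi} carrying weights $\jb{y}^{-k}$ with $k\geq 2$ allow us to trade $\Gamma_1=t\partial_y+y\partial_t$ for a factor of $\jb{y}^{-1}$ times a manageable derivative, leaving contributions controlled by the $L^\infty$ bootstrap \eqref{eq:disp1boot}--\eqref{eq:disp2boot} and the weighted energy \eqref{eq:ener2boot}; the dangerous unweighted cubic term from $Q_3$ is expanded via the null-form identity \eqref{eq:keygradientstructure2} into a total derivative plus a remainder that is handled by \eqref{eq:plainenergy4}--\eqref{eq:weightedenergy4} and the equation itself.

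For the iterated cases $\Gamma_1^2\tilde\psi$ and $\Gamma_1\Gamma_2\tilde\psi$ we apply the same procedure inductively. For $\Gamma_1\Gamma_2$ we first use the analogous equation for $\Gamma_2\tilde\psi$, whose controlled bounds come from \eqref{eq:ener2boot}, and then commute one $\Gamma_1$ through, producing nested commutators of the form $t^kV^{(k)}(y)\Gamma_2^{\kappa-1}\tilde\psi$ and similar terms involving $yV'(y)$ from $[\Gamma_2,V]$; all are spatially localized and absorbed via \eqref{eq:localenerboot}, \eqref{eq:unstableboundboot}. The main obstacle is maintaining the sharp powers in $\jb{t}$ on the right-hand sides: every application of $\Gamma_1$ morally costs a factor of $t$ through the commutator $tV'(y)$, and closing the claimed rate $\jb{t}^{|\kappa|-\frac12-\delta_1}$ (resp.\ $\jb{t}^{1+\cdots}$) requires that this $t$ factor be balanced by the gain from local energy decay together with the spatial localization of $V'$ and $V''$, with the $\delta_1$ margin absorbing the $\jb{T}^\nu$ and logarithmic losses inherent in \eqref{eq:localenerboot} and the dispersive bound \eqref{eq:disp2boot}.
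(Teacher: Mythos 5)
There is a genuine gap at the heart of your argument: the commutator term $tV'(y)\tilde{\psi}$ cannot be treated as a perturbative Duhamel source if one wants the sharp rate $\jb{t}^{\kappa-\frac12-\delta_1}$. Note first that $V(y)=\frac{6+y^2}{4\jb{y}^4}\sim \frac14\jb{y}^{-2}$, so $V'(y)\sim\jb{y}^{-3}$ (not $\jb{y}^{-5}$), but the localization is not the issue; the issue is the explicit factor of $t$. The best available bound on $\tilde{\psi}$ in the localized region is the dispersive bootstrap \eqref{eq:disp2boot}, which gives $\|tV'(y)\tilde{\psi}(t)\|_{L^2_{dy}}\lesssim \eps\, t\cdot t^{-\frac12-\delta_1}=\eps\, t^{\frac12-\delta_1}$, so any energy/Duhamel estimate based on Propositions \ref{eq:keyest1}--\ref{eq:keyest2} produces $\int_0^t\eps\, s^{\frac12-\delta_1}ds\sim\eps\, t^{\frac32-\delta_1}$ for $\|\nabla_{t,y}\Gamma_1\tilde{\psi}(t)\|_{L^2_{dy}}$, a full power of $t$ worse than the claimed $\eps\, t^{\frac12-\delta_1}$; using instead the local energy decay \eqref{eq:localenerboot} with Cauchy--Schwarz in $t$ gives $\lesssim T^{\frac32+\nu}$, which is even worse. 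There is no estimate in the paper's toolkit that converts an $L^2_{t,y}$-localized source with a growing factor $t$ into an $L^\infty_t L^2_y$ bound without this loss, and the loss compounds for $\kappa=2$ and for $\Gamma_1\Gamma_2$. Your closing remark that the $t$ factor is ``balanced by the gain from local energy decay'' is precisely the step that does not hold: local energy decay only yields $\jb{T}^{\nu}$ growth of a space-time norm, not the $t^{-1}$ gain pointwise in time that would be needed. This is why the paper explicitly warns (Section \ref{sect:linear}) that one cannot commute $\Gamma_{1,2}$ with $\mathcal{L}$ and obtain estimates for $\Gamma\phi$ from those for $\phi$.

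The paper's actual proof never commutes $\Gamma_1$ with the flow. It derives the boost bounds algebraically from the already-controlled $\Gamma_2$ bounds (bootstrap \eqref{eq:ener2boot}) and the equation: for $\kappa=1$ it uses $(\Gamma_1\tilde{\psi})_{t,y}-(\Gamma_2\tilde{\psi})_{t,y}=O(|(t-y)\nabla^2_{t,y}\tilde{\psi}|)+O(|\nabla_{t,y}\tilde{\psi}|)$, expresses $(t-y)\tilde{\psi}_{tt}$ and $(t-y)\tilde{\psi}_{ty}$ in terms of derivatives of $\Gamma_2\tilde{\psi}$ via \eqref{eq:replacepsiyy1}--\eqref{eq:replacepsiyy2}, and controls $(t-y)\tilde{\psi}_{yy}$ in the interior region through the explicit variation-of-parameters representation \eqref{eq:technical3} built from the zero-energy fundamental system of $\mathcal{L}$, with the coefficient $a(t)$ handled by \eqref{eq:disp2boot}; for $\kappa=2$ it uses $(\Gamma_1^2-\Gamma_2^2)\tilde{\psi}=(t^2-y^2)(\tilde{\psi}_{yy}-\tilde{\psi}_{tt})$ together with the equation, and for $\Gamma_1\Gamma_2$ a similar identity plus Lemma \ref{lemma:appB}, in each case splitting into the regions $y\ll t$, $y\sim t$, $y\gg t$. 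If you want to salvage your approach you would have to exhibit a mechanism that gains a full power of $t$ on the commutator (e.g.\ the representation formula in $y$ that the paper uses), at which point you are essentially reproducing the paper's argument.
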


\begin{proof}
We start with the first bound of the lemma with $\kappa = 1$. \\

{\it{(1): Proof of the first inequality with $\kappa = 1$.}} Observe that 
\begin{equation}\label{eq:estimga1ga2}
(\Gamma_1\tilde{\psi})_{t,y} - (\Gamma_2\tilde{\psi})_{t,y} = O(\big|(t-y)\nabla_{t,y}^2\tilde{\psi}\big|) + O(|\nabla_{t,y}\tilde{\psi}|).
\end{equation}
Further, note
\begin{equation}\label{eq:technical1}
(\Gamma_2\tilde{\psi})_{t} = t\tilde{\psi}_{tt} + y\tilde{\psi}_{ty} + \tilde{\psi}_t,\,(\Gamma_2\tilde{\psi})_{y} = t\tilde{\psi}_{ty} + y\tilde{\psi}_{yy} + \tilde{\psi}_y.
\end{equation}
We can replace $y\tilde{\psi}_{yy}$ by $y\tilde{\psi}_{tt}$ by using the equation 
\[
y\tilde{\psi}_{yy} = y\tilde{\psi}_{tt} - \frac{y}{2}\frac{3+\frac{y^2}{2}}{(1+y^2)^2}\tilde{\psi} + y P_cG.
\]
We infer
\begin{equation}\label{eq:replacepsiyy1}
(t-y)\tpsi_{tt} = \frac{t(\Ga_2\tpsi)_t-y(\Ga_2\tpsi)_y+t\psi_t-y\tpsi_y-y\big(- \frac{y}{2}\frac{3+\frac{y^2}{2}}{(1+y^2)^2}\tilde{\psi} + y P_cG\big)}{t+y},
\end{equation}
\begin{equation}\label{eq:replacepsiyy2}
(t-y)\tpsi_{ty} = \frac{y(\Ga_2\tpsi)_t-t(\Ga_2\tpsi)_y+y\psi_t-t\tpsi_y-t\big(- \frac{y}{2}\frac{3+\frac{y^2}{2}}{(1+y^2)^2}\tilde{\psi} + y P_cG\big)}{t+y}.
\end{equation}
Using the bootstrap assumption \eqref{eq:ener1boot}, we have for $|\beta|+1\leq N_1$ 
\begin{align*}
\big\|\nabla_{t,y}^{\beta}\big[y P_cG(t, \cdot)\big]\big\|_{L^2_{dy}} +\big\|\nabla_{t,y}^{\beta}\big[\frac{y}{2}\frac{3+\frac{y^2}{2}}{(1+y^2)^2}\tilde{\psi}(t, \cdot)\big]\big\|_{L^2_{dy}}\lesssim \eps\langle t\rangle^{3\nu}.
\end{align*}
Together with \eqref{eq:replacepsiyy1}, \eqref{eq:replacepsiyy2} and the bootstrap assumptions \eqref{eq:ener1boot} and \eqref{eq:ener2boot}, we obtain 
\begin{equation}\label{eq:technical2}
\big\|(t-y)\nabla_{t,y}^{\beta}\tilde{\psi}_{tt}\big\|_{L^2_{dy}} + \big\|(t-y)\nabla_{t,y}^{\beta}\tilde{\psi}_{ty}\big\|_{L^2_{dy}}\lesssim \eps\langle t\rangle^{(1+[\frac{2|\beta|}{N_1}])10\nu},\,|\beta|+1\leq N_1. 
\end{equation}
It remains to bound 
\[
\big\|(t-y)\nabla_{t,y}^{\beta}\tilde{\psi}_{yy}\big\|_{L^2_{dy}},\,|\beta|+1\leq N_1.
\]
Here we directly use the equation satisfied by $\tilde{\psi}$. Let $\tilde{\phi}_{1,2}$ be a fundamental system associated with $\mathcal{L}$, with $\tilde{\phi}_1$ given by 
\[
\frac{-\sqrt{1+y^2} + y\sinh^{-1}(y)}{(1+y^2)^{\frac{1}{4}}}.
\]
Note in particular that $|\tilde{\phi}_{1,2}(y)|\lesssim y^{\frac{1}{2}}\log y$ as $y\rightarrow\infty$.  Then we have the formula
\begin{equation}\label{eq:technical3}\begin{split}
\tilde{\psi}(t, y)& = \tilde{\phi}_2(y)\int_0^y\tilde{\phi}_1(\tilde{y})\big[\tilde{\psi}_{tt}(t, \tilde{y}) + P_cG(t, \tilde{y})\big]\,d\tilde{y}\\
&-\tilde{\phi}_1(y)\int_0^y\tilde{\phi}_2(\tilde{y})\big[\tilde{\psi}_{tt}(t, \tilde{y}) + P_cG(t, \tilde{y})\big]\,d\tilde{y}\\
&+a(t)\tilde{\phi}_1(y),
\end{split}\end{equation}
and the improved local decay \eqref{eq:disp2boot} implies 
\[
\big|\partial_t^{\beta}a(t)\big|\lesssim \eps\langle t\rangle^{-\frac{1}{2}-\delta_1},\,\beta\leq \frac{N_1}{2}+C.
\]
But then \eqref{eq:technical2} as well as the precise form of $G$ imply that {\it{restricting to $y\leq t$}}, we have 
\[
\big\|(t-y)\nabla_{t,y}^{\beta}\tilde{\psi}_{yy}(t, \cdot)\big\|_{L^2_{dy}(y\leq t)}\lesssim \eps \langle t\rangle^{\frac{1}{2}-\delta_1},\,|\beta|\leq \frac{N_1}{2}+C,
\]
while the bound
\begin{equation}\label{eq:estimatetmyextregion}
\big\|(t-y)\nabla_{t,y}^{\beta}\tilde{\psi}_{yy}(t, \cdot)\big\|_{L^2_{dy}(y> t)}\lesssim\eps \langle t\rangle^{(1+[\frac{2|\beta|}{N_1}])10\nu}
\end{equation}
follows directly from the equation satisfied by $\tilde{\psi}$. In fact, replacing $\tilde{\psi}_{yy}$ by $\tilde{\psi}_{tt}$ the bound follows from \eqref{eq:replacepsiyy1}, and we can absorb the factor $(t-y)$ in the potential for the linear term (in the region $y\gtrsim t$), while this factor is easily absorbed by the nonlinearity as in the inequality after \eqref{eq:replacepsiyy2}. 
The missing bounds with $|\beta|>\frac{N_1}{2}+C$ are easily obtained directly from the equation (inductively). Together with \eqref{eq:estimga1ga2}, \eqref{eq:technical2} and the bootstrap assumption \eqref{eq:ener2boot}, we deduce
\[
\big\|\nabla_{t,y}\nabla_{t,y}^{\beta}\Gamma_1\tilde{\psi}\big\|_{L^2_{dy}}\lesssim \eps \langle t\rangle^{\frac{1}{2}-\delta_1},\,1+|\beta|\leq N_1.
\]

\vspace{0.3cm}

{\it{(2): Proof of the first inequality of the lemma with $\kappa = 2$.}} Observe that
\[
(\Gamma_1^2 - \Gamma_2^2)\tilde{\psi} = (t^2-y^2)(\tilde{\psi}_{yy} - \tilde{\psi}_{tt}), 
\]
{\it{(a): inner region, $y\leq t$}}. We get 
\[
\big\|\nabla_{t,y}^{\beta}\big[(t^2-y^2) \tilde{\psi}_{tt}(t, \cdot)\big]\big\|_{L^2_{dy}(y\leq t)}\lesssim \eps\langle t\rangle^{1+(1+[\frac{2|\beta|}{N_1}])10\nu},\,|\beta|+1\leq N_1,
\]
on account of \eqref{eq:technical2}. Using \eqref{eq:technical3}, we have
\[
\big\|\nabla_{t,y}^{\beta}\big[(t^2-y^2)\tilde{\psi}_{yy}\big]\big\|_{L^2_{dy}(y\leq t)}\lesssim \eps\langle t\rangle^{\frac{3}{2}-\delta_1},\,|\beta|+1\leq N_1,
\]
provided $\nabla_{t,y}^{\beta} = \partial_t^{\beta_1}\partial_y^{\beta_2}$ with $\beta_1\leq \frac{N_1}{2}+C$, and the remaining cases are obtained  using induction and the equation for $\tilde{\psi}$. It then follows that we have the bounds 
\begin{align*}
\big\|\nabla_{t,y}\nabla_{t,y}^{\beta}(\Gamma_1^2 - \Gamma_2^2)\tilde{\psi}\big\|_{L^2_{dy}(y\leq t)}\lesssim \eps \langle t\rangle^{\frac{3}{2}-\delta_1},\,|\beta|+2\leq N_1.
\end{align*}
{\it(b) For the outer cone region} $y>t$, we use 
\[
(\Gamma_1^2 - \Gamma_2^2)\tilde{\psi} = (t^2-y^2)\big(P_c G - \frac{1}{2}\frac{3+\frac{y^2}{2}}{(1+y^2)^2}\tilde{\psi}\big).
\]
Then use the bound (for suitable $\delta>0$)
\[
\big|P_c G\big|\lesssim \langle y\rangle^{-\frac{3}{2}}\big|\langle\nabla_{t,y}\rangle^2\phi\big|^2 + \langle y\rangle^{\frac{1}{2}}|\nabla_{t,y}\phi|^2\nabla_{t,y}^2\phi + \eps e^{-\delta |y|}.
\]
It remains to verify that the weight $t^2-y^2$ may be absorbed in the cubic terms. Note that for $|\beta|\leq N_1-1$, we have by the Sobolev embedding $H^1(\R)\hookrightarrow L^\infty(\R)$ and bootstrap assumption \eqref{eq:ener1boot}
\[
\|\langle y\rangle^{\frac{1}{2}}\nabla_{t,y}^{\beta}\nabla_{t,y}\phi\|_{L^\infty}\lesssim \eps\langle t\rangle^{\nu},
\]
while from \eqref{eq:estimatetmyextregion}, we know that 
\[
\big\|(t-y)\nabla_{t,y}^{\beta}\nabla_{t,y}^2\tilde{\psi}\big\|_{L^2_{dy}(y>t)}\lesssim \eps \langle t\rangle^{10\nu},\,|\beta|<\frac{N_1}{2}.
\]
It then follows that for any $|\beta|\leq N_1-1$, we have
\begin{align*}
\big\|\nabla^\beta_{t,y}\big((t^2-y^2)\langle y\rangle^{\frac{1}{2}}|\nabla_{t,y}\phi|^2\nabla_{t,y}^2\phi\big)\big\|_{L^2_{dy}(y>t)}\lesssim \eps^3\langle t\rangle^{12\nu}.
\end{align*}
Finally, bootstrap assumption \eqref{eq:ener1boot} yields
\[
\big\|\nabla_{t,y}\nabla_{t,y}^{\beta}\big((t^2-y^2)\frac{1}{2}\frac{3+\frac{y^2}{2}}{(1+y^2)^2}\tilde{\psi}\big)\big\|_{L^2_{dy}(y>t)}\lesssim \eps\langle t\rangle^{\nu},\,|\beta|\leq N_1 - 2.
\]
It now follows that for $|\beta|+2\leq N_1$, we have 
\begin{align*}
\big\|\nabla_{t,y}\nabla_{t,y}^{\beta}\big((\Gamma_1^2 - \Gamma_2^2)\tilde{\psi}\big)\big\|_{L^2_{dy}(y>t)}\lesssim \eps\langle t\rangle^{12\nu}\lesssim \eps\langle t\rangle^{\frac{3}{2}-\delta_1}.
\end{align*}
The estimates in {\it(a), (b)} complete the proof of the first estimate of the lemma for $\kappa = 2$. 
\\

{\it{(3): Proof of the second inequality of the lemma.}} We have the following identity 
\begin{align*}
\Gamma_1\Gamma_2 - \Gamma_2^2 = (t-y)^2\partial^2_{ty} - (t-y)^2\partial_t^2 + (ty - y^2)(\partial_y^2 - \partial_t^2) + \Gamma_1 - \Gamma_2.
\end{align*}
Note that 
\begin{align*}
\tilde{\psi}_{ty} - \tilde{\psi}_{tt} = \frac{(\Gamma_2\tilde{\psi})_y  - (\Gamma_2\tilde{\psi})_t - \tilde{\psi}_y + \tilde{\psi}_t - y(\tilde{\psi}_{yy} - \tilde{\psi}_{tt})}{t-y}
\end{align*}
Then, using simple variations of the estimates above, in particular the structure of $\tilde{\psi}_{yy} - \tilde{\psi}_{tt}$,  one concludes that
\begin{align*}
\big\|\nabla_{t,y}^{\beta}\big((\Gamma_1\Gamma_2 - \Gamma_2^2)\tilde{\psi}\big)\big\|_{L^2_{dy}(y\lesssim t)}\lesssim \eps\langle t\rangle^{1+(1+[\frac{2|\beta|}{N_1}])10\nu},\,2\leq |\beta|+1\leq N_1,
\end{align*}
which in light of the a priori bound on $\Gamma_2^2\tilde{\psi}$ implies the second estimate of the lemma in the region $y\lesssim t$. In the region $y\gg t$, one uses Lemma~\ref{lemma:appB} to estimate $\|(t-y)^2\nabla_{t,y}^3\tilde{\psi}\|_{L^2(y\gg t)}$ directly. Note that the proof of the latter actually allows us estimate $\|(t-y)^2\nabla_{t,y}^3\tilde{\psi}\|_{L^2(y\gg t)}$ also in the region $y\sim t$. 
\end{proof}

\begin{rem}\label{rem:1} The preceding proof reveals that for $\Gamma^\kappa$ any product of at most two of the vector fields $\Gamma_1, \Gamma_2$, we have 
\[
\big\|\nabla_{t,y}\nabla_{t,y}^{\beta}\Gamma^\kappa\tilde{\psi}\big\|_{L_y^2(y\gtrsim t)}\lesssim \eps\langle t\rangle^{(1+[\frac{2|\beta|}{N_1}])100\nu},\,|\beta|+|\kappa|\leq N_1,\, \kappa\in\{1,2\}.
\]
\end{rem}

\begin{lem}\label{lem:subtle}
We can split $\Gamma_1\tilde{\psi} = (\Gamma_1\tilde{\psi})_1 + (\Gamma_1\tilde{\psi})_2$, where we have
\[
\big\|\langle \log y\rangle^{-1}\nabla_{t,y}^{\beta}(\Gamma_1\tilde{\psi})_1\big\|_{L_y^2(y\ll t)}\lesssim \eps\langle t\rangle^{\frac{1}{2}-\delta_1}
\]
provided have $0\leq |\beta|\leq \frac{N_1}{2}+C$, while we have 
\[
\big\|\nabla_{t,y}\nabla_{t,y}^{\beta}(\Gamma_1\tilde{\psi})_2\big\|_{L_y^2(y\ll t)}\lesssim \eps \langle t\rangle^{(1+[\frac{2|\beta|}{N_1}])100\nu},\,|\beta|+2\leq N_1.
\]
Moreover, there is a splitting $\Gamma_1^2\tilde{\psi} = (\Gamma_1^2\tilde{\psi})_1 + (\Gamma_1^2\tilde{\psi})_2$, with 
\[
\big\|\langle \log y\rangle^{-1}\nabla_{t,y}^{\beta}(\Gamma_1^2\tilde{\psi})_1\big\|_{L_y^2(y\ll t)}\lesssim \eps\langle t\rangle^{\frac{3}{2}-\delta_1},\,0\leq |\beta|\leq \frac{N_1}{2}+C,
\]
as well as 
\[
\big\|\nabla_{t,y}\nabla_{t,y}^{\beta}(\Gamma_1^2\tilde{\psi})_2\big\|_{L_y^2(y\ll t)}\lesssim \eps \langle t\rangle^{(1+[\frac{2|\beta|}{N_1}])100\nu},\,|\beta|+2\leq N_1.
\]
Finally, there is a splitting $\Gamma_1\Gamma_2\tilde{\psi} = (\Gamma_1\Gamma_2\tilde{\psi})_1 + (\Gamma_1\Gamma_2\tilde{\psi})_2$, with 
\[
\big\|\langle \log y\rangle^{-1}\nabla_{t,y}^{\beta}(\Gamma_1\Gamma_2\tilde{\psi})_1\big\|_{L_y^2(y\ll t)}\lesssim \eps\langle t\rangle^{1+(1+[\frac{2|\beta|}{N_1}])10\nu},\,0\leq |\beta|\leq \frac{N_1}{2}+C,
\]
as well as 
\[
\big\|\nabla_{t,y}\nabla_{t,y}^{\beta}(\Gamma_1\Gamma_2\tilde{\psi})_2\big\|_{L_y^2(y\ll t)}\lesssim \eps \langle t\rangle^{\frac{1}{2}-\delta_1},\,|\beta|+2\leq N_1.
\]

\end{lem}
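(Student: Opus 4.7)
The strategy mirrors that of Lemma~\ref{lem:3}, refined so as to isolate the slowly-decaying contributions governed by the improved local decay \eqref{eq:disp2boot} from those which only admit the bootstrap energy bounds \eqref{eq:ener1boot}, \eqref{eq:ener2boot}. The starting point is the variation-of-parameters representation \eqref{eq:technical3} for $\tilde\psi$, together with the improved pointwise bound $|\partial_t^{\beta}a(t)|\lesssim \eps\langle t\rangle^{-1/2-\delta_1}$ valid for $\beta\leq N_1/2+C$ (established in part (1) of the proof of Lemma~\ref{lem:3}), and the asymptotic behavior $|\tilde\phi_{1,2}(y)|\lesssim y^{1/2}\log y$, $|\tilde\phi_{1,2}'(y)|\lesssim y^{-1/2}\log y$ as $y\to\infty$, which is the source of the $\langle\log y\rangle$-weight in the statement.

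To split $\Gamma_1\tilde\psi$ I would apply $\Gamma_1=t\partial_y+y\partial_t$ directly to \eqref{eq:technical3}: the homogeneous piece contributes $ta(t)\tilde\phi_1'(y)+ya'(t)\tilde\phi_1(y)$, while the integral pieces produce analogous terms either with $\tilde\phi_{1,2}'(y)$ (from $t\partial_y$) or an extra time derivative of $\tilde\psi_{tt}+P_c G$ inside the integrand (from $y\partial_t$). Define $(\Gamma_1\tilde\psi)_1$ to collect all contributions in which the total number of time derivatives hitting either $a(t)$ or $\tilde\psi_{tt}+P_cG$ is $\leq N_1/2+C$, and put the remainder into $(\Gamma_1\tilde\psi)_2$. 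For the first piece one combines the improved decay of $a$ and the pointwise bound \eqref{eq:disp2boot} on the source (exactly as in the derivation of $\|(t-y)\nabla^{\beta}\tilde\psi_{yy}\|_{L^2_{dy}(y\leq t)}\lesssim\eps\langle t\rangle^{1/2-\delta_1}$ carried out in Lemma~\ref{lem:3}) with the $y^{1/2}\log y$ asymptotics of $\tilde\phi_{1,2}$; restricting to $y\ll t$ and inserting the weight $\langle\log y\rangle^{-1}$ kills the resulting logarithmic divergence and yields the stated rate $\eps\langle t\rangle^{1/2-\delta_1}$. For the second piece one falls back on the weighted energies \eqref{eq:ener1boot}, \eqref{eq:ener2boot}, using Remark~\ref{rem:1} to handle the transition regime $y\sim t$; the $y^{1/2}\log y$ growth of $\tilde\phi_{1,2}$ is absorbed into the $L^2$ integration over a region of diameter $\lesssim t$ at the cost of the $\eps\langle t\rangle^{100\nu}$ growth.

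For $\Gamma_1^2\tilde\psi$ and $\Gamma_1\Gamma_2\tilde\psi$ I would exploit the algebraic identities
\[
(\Gamma_1^2-\Gamma_2^2)\tilde\psi=(t^2-y^2)(\tilde\psi_{yy}-\tilde\psi_{tt}),\quad \Gamma_1\Gamma_2-\Gamma_2^2=(t-y)^2\partial^2_{ty}-(t-y)^2\partial_t^2+(ty-y^2)(\partial_y^2-\partial_t^2)+\Gamma_1-\Gamma_2
\]
already used in the proof of Lemma~\ref{lem:3}, together with the a priori bounds on $\Gamma_2^2\tilde\psi$ and on $\Gamma_1\Gamma_2\tilde\psi$ established there, to reduce the matter to splittings of $(t^2-y^2)\tilde\psi_{yy}$ and $(t-y)^2\tilde\psi_{ty}$. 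Substituting $\tilde\psi_{yy}=-\mathcal{L}\tilde\psi+\tilde\psi_{tt}$ and invoking \eqref{eq:technical3} brings us back to the same template as in the $\Gamma_1$ case; the extra factor of $t$ (or $\langle t\rangle^{10\nu}$ from a second scaling derivative) produces the improved rates $\eps\langle t\rangle^{3/2-\delta_1}$ and $\eps\langle t\rangle^{1+(1+[2|\beta|/N_1])10\nu}$ respectively.

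The principal technical obstacle is the derivative bookkeeping: each of the at-most-$N_1+1$ derivatives on $\tilde\psi$ must land either on a source to which \eqref{eq:disp2boot} applies (with total count $\leq N_1/2+C$) or on a factor estimated by the weighted energy, but not straddling the two regimes. A secondary subtlety is the borderline logarithmic asymptotics of $\tilde\phi_{1,2}$, which is precisely what forces both the strict restriction $y\ll t$ (rather than $y\leq t$) and the $\langle\log y\rangle^{-1}$ weight on the $(\cdot)_1$ part; without these one would pick up uncompensated factors of $\log t$.
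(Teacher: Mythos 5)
Your outline follows the same route as the paper (apply $\Gamma_1$ to the representation \eqref{eq:technical3}, treat the homogeneous part through the improved decay of $a(t)$, treat the Duhamel parts through energy-type interior bounds, and reduce $\Gamma_1^2$, $\Gamma_1\Gamma_2$ to $\Gamma_2$-quantities via the algebraic identities), but the splitting you actually propose does not deliver the stated estimates. First, the criterion ``collect the contributions in which the number of time derivatives hitting $a(t)$ or $\tilde{\psi}_{tt}+P_cG$ is $\leq N_1/2+C$'' refers to derivatives that are applied only \emph{after} the decomposition, so it does not define a fixed splitting of the function $\Gamma_1\tilde{\psi}$; read at the level of $\Gamma_1\tilde{\psi}$ itself it puts every term into $(\Gamma_1\tilde{\psi})_1$. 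Second, and more seriously, that assignment fails at $|\beta|=0$: the terms $y\,a'(t)\tilde{\phi}_1(y)$ and $y\,\tilde{\phi}_{2}(y)\int_0^y\tilde{\phi}_{1}\big[\tilde{\psi}_{ttt}+(P_cG)_t\big]$ (and likewise the $t\partial_y$-term with $\tilde{\psi}_{tt}$ in the integrand) grow at least like $y^{3/2}$ in $y$, so on $y\ll t$ their $\langle\log y\rangle^{-1}$-weighted $L^2$ norms grow like $\langle t\rangle^{1+}$, violating the claimed $\eps\langle t\rangle^{\frac12-\delta_1}$; your suggestion that \eqref{eq:disp2boot} applied to the source plus the log-weight yields the $(\cdot)_1$ rate is not correct for these integral terms (pointwise insertion of \eqref{eq:disp2boot} leaves an uncompensated power of $y$, hence of $t$). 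These terms can only be controlled after at least one additional derivative is applied --- which is exactly why the $(\cdot)_2$ estimate in the statement carries the extra leading $\nabla_{t,y}$ --- and so they must sit in $(\cdot)_2$. The paper's split is therefore $(\Gamma_1\tilde{\psi})_1 = t\,a(t)\tilde{\phi}_1'(y)$ \emph{alone}, with everything else in $(\Gamma_1\tilde{\psi})_2$.

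The ingredients needed to bound that $(\cdot)_2$ piece are also missing from your outline: one uses $a'(t)=t^{-1}\Gamma_2\tilde{\psi}(t,0)$, $a''(t)=t^{-2}(\Gamma_2^2-\Gamma_2)\tilde{\psi}(t,0)$ together with \eqref{eq:technical01}; the interior bounds $\|\nabla_{t,y}^{\beta}\tilde{\psi}_{ttt}\|_{L^2(y\ll t)}+\|\nabla_{t,y}^{\beta}\tilde{\psi}_{tty}\|_{L^2(y\ll t)}\lesssim\eps\langle t\rangle^{100\nu-2}$ from Lemma~\ref{lemma:appB} (not \eqref{eq:disp2boot}); and, for the time derivative of the $y\partial_t$-term, the substitution $\tilde{\psi}_{tttt}=\tilde{\psi}_{ttyy}+V\tilde{\psi}_{tt}+\mathrm{l.o.t.}$ followed by an integration by parts and the identity expressing $\tilde{\psi}_{tt}$ through $\Gamma_2\tilde{\psi}$, $\Gamma_2^2\tilde{\psi}$. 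For $\Gamma_1^2$ your identity is the right one, but one does not return to \eqref{eq:technical3}: the paper simply replaces $(t^2-y^2)(\tilde{\psi}_{yy}-\tilde{\psi}_{tt})$ by $(t^2-y^2)(P_cG-V\tilde{\psi})$ via the equation, bounds $t^2\|\nabla^{\beta}(V\tilde{\psi})\|_{L^2}$ by \eqref{eq:disp2boot} to get the $(\cdot)_1$ piece, and takes $(\cdot)_2=\Gamma_2^2\tilde{\psi}$ controlled by \eqref{eq:ener2boot}. Finally, note that in the $\Gamma_1\Gamma_2$ splitting the roles of the two bounds are interchanged (there the piece with the full derivative range carries the $\langle t\rangle^{\frac12-\delta_1}$ bound), a structural point your sketch does not address.
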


\begin{proof}
In fact, using \eqref{eq:technical3}, we get 
\begin{align*}
\Gamma_1\tilde{\psi}& = \Gamma_1\big(\tilde{\phi}_2(y)\int_0^y\tilde{\phi}_1(\tilde{y})\big[\tilde{\psi}_{tt}(t, \tilde{y}) + P_cG(t, \tilde{y})\big]\,d\tilde{y}\big)\\
&-\Gamma_1\big(\tilde{\phi}_1(y)\int_0^y\tilde{\phi}_2(\tilde{y})\big[\tilde{\psi}_{tt}(t, \tilde{y}) + P_cG(t, \tilde{y})\big]\,d\tilde{y}\big)\\
&+\Gamma_1\big(a(t)\tilde{\phi}_1(y)\big).
\end{align*}
Here we have 
$$a'(t) = \partial_t\tilde{\psi}(t,0) = t^{-1}\Gamma_2\tilde{\psi}(t,0),\, a''(t) =   \partial_t^2\tilde{\psi}(t,0) = t^{-2}(\Gamma_2^2\tilde{\psi} - \Gamma_2\tilde{\psi})(t,0),$$
and so using the bound \eqref{eq:technical01} (proved independently below), we get 
\[
\big\|\nabla_{t,y}\nabla_{t,y}^{\beta}\big(ya'(t)\tilde{\phi}_1\big)\big\|_{L^2_{dy}(y\ll t)}\lesssim \eps\langle t\rangle^{(1+[\frac{2|\beta|}{N_1}])100\nu},\,|\beta|+2\leq N_1,
\]
while we have 
\[
\big\|\langle \log y\rangle^{-1}ta(t)\tilde{\phi}_1'(y)\big\|_{L_y^2(y\ll t)}\lesssim \eps\langle t\rangle^{\frac{1}{2}-\delta_1}.
\]
Further, write 
\begin{align*}
& \Gamma_1\big(\tilde{\phi}_2(y)\int_0^y\tilde{\phi}_1(\tilde{y})\big[\tilde{\psi}_{tt}(t, \tilde{y}) + P_cG(t, \tilde{y})\big]\,d\tilde{y}\big)\\
&-\Gamma_1\big(\tilde{\phi}_1(y)\int_0^y\tilde{\phi}_2(\tilde{y})\big[\tilde{\psi}_{tt}(t, \tilde{y}) + P_cG(t, \tilde{y})\big]\,d\tilde{y}\big)\\
&=: I + II,
\end{align*}
where 
\begin{align*}
I &= t\tilde{\phi}_2'(y)\int_0^y\tilde{\phi}_1(\tilde{y})\big[\tilde{\psi}_{tt}(t, \tilde{y}) + P_cG(t, \tilde{y})\big]\,d\tilde{y}\\& -  t\tilde{\phi}_1'(y)\int_0^y\tilde{\phi}_2(\tilde{y})\big[\tilde{\psi}_{tt}(t, \tilde{y}) + P_cG(t, \tilde{y})\big]\,d\tilde{y},
\end{align*}
\begin{align*}
II&= y\tilde{\phi}_2(y)\int_0^y\tilde{\phi}_1(\tilde{y})\big[\tilde{\psi}_{ttt}(t, \tilde{y}) + (P_cG)_t(t, \tilde{y})\big]\,d\tilde{y}\\& -  y\tilde{\phi}_1(y)\int_0^y\tilde{\phi}_2(\tilde{y})\big[\tilde{\psi}_{ttt}(t, \tilde{y}) + (P_cG)_t(t, \tilde{y})\big]\,d\tilde{y}.
\end{align*}
In view of Lemma \ref{lemma:appB}, we have
\[
\big\|\nabla_{t,y}^{\beta}\tilde{\psi}_{ttt}\big\|_{L_y^2(y\ll t)}\lesssim \eps\langle t\rangle^{(1+[\frac{2|\beta|}{N_1}])100\nu-2},\,|\beta|+2\leq N_1.
\]
It then follows that 
\[
\big\|\nabla_{t,y}\nabla_{t,y}^{\beta}I\big\|_{L^2_{dy}(y\ll t)}\lesssim \eps\langle t\rangle^{(1+[\frac{2|\beta|}{N_1}])100\nu},\,|\beta|+2\leq N_1
\]
For the term $II$ above, observe that 
\begin{align*}
II_y&= (y\tilde{\phi}_2)'(y)\int_0^y\tilde{\phi}_1(\tilde{y})\big[\tilde{\psi}_{ttt}(t, \tilde{y}) + (P_cG)_t(t, \tilde{y})\big]\,d\tilde{y}\\& -  (y\tilde{\phi}_1)'(y)\int_0^y\tilde{\phi}_2(\tilde{y})\big[\tilde{\psi}_{ttt}(t, \tilde{y}) + (P_cG)_t(t, \tilde{y})\big]\,d\tilde{y}
\end{align*}
which can be estimated just like $I_t$. Finally, we have 
\begin{align*}
II_t&= (y\tilde{\phi}_2)(y)\int_0^y\tilde{\phi}_1(\tilde{y})\big[\tilde{\psi}_{tttt}(t, \tilde{y}) + (P_cG)_{tt}(t, \tilde{y})\big]\,d\tilde{y}\\& -  (y\tilde{\phi}_1)(y)\int_0^y\tilde{\phi}_2(\tilde{y})\big[\tilde{\psi}_{tttt}(t, \tilde{y}) + (P_cG)_{tt}(t, \tilde{y})\big]\,d\tilde{y}
\end{align*}
Then use the equation to write $\tilde{\psi}_{tttt} = \tilde{\psi}_{ttyy} + V\tilde{\psi}_{tt} + l.o.t.$. 
Performing an integration by parts, this allows us to write 
\begin{align*}
II_t&= (y\tilde{\phi}_2)(y)\int_0^y-\tilde{\phi}_1'(\tilde{y})\tilde{\psi}_{tt\tilde{y}}(t, \tilde{y}) + \tilde{\phi}_1(\tilde{y})\big[V(\tilde{y})\tilde{\psi}_{tt} + (P_cG)_{tt}(t, \tilde{y})\big]\,d\tilde{y}\\& -  (y\tilde{\phi}_1)(y)\int_0^y-\tilde{\phi}_2'(\tilde{y})\tilde{\psi}_{tt\tilde{y}}(t, \tilde{y}) + \tilde{\phi}_2(\tilde{y})\big[V(\tilde{y})\tilde{\psi}_{tt} + (P_cG)_{tt}(t, \tilde{y})\big]\,d\tilde{y}
\end{align*}
Using 
\[
\big\|\nabla_{t,y}^{\beta}\tilde{\psi}_{tty}\big\|_{L_y^2(y\ll t)}\lesssim \eps\langle t\rangle^{(1+[\frac{2|\beta|}{N_1}])100\nu-2},\,|\beta|+2\leq N_1,
\]
(see Lemma \ref{lemma:appB}) as well as the identity 
\[
\tilde{\psi}_{tt} = (t^2-y^2)^{-1}\big[\Gamma_2^2\tilde{\psi} - 2y\partial_y\Gamma_2\tilde{\psi} + 2y\partial_y\tilde{\psi} - \Gamma_2\tilde{\psi}+y^2\square\tpsi\big]
\]
one gets 
\begin{align*}
\big\|\nabla_{t,y}\nabla_{t,y}^{\beta}II\big\|_{L_y^2(y\ll t)}\lesssim \eps\langle t\rangle^{(1+[\frac{2|\beta|}{N_1}])100\nu},\,|\beta|+2\leq N_1. 
\end{align*}

Next, consider $\Gamma_1^2\tilde{\psi}$. Recall the identity
\[
(\Gamma_1^2 - \Gamma_2^2)\tilde{\psi} = (t^2-y^2)\square\tpsi.
\]
This yields for $|\beta|+1\leq N_1$:
\begin{eqnarray*}
\big\|\nabla_{t,y}^{\beta}\big((\Gamma_1^2 - \Gamma_2^2)\tilde{\psi}\big)\big\|_{L_y^2(y\ll t)}&\lesssim& t^2\|\nabla_{t,y}^{\beta}(V(\cdot)\tpsi)\|_{L^2_y}+l.o.t\\
&\lesssim&  \eps\langle t\rangle^{\frac{3}{2}-\delta_1},
\end{eqnarray*}
where we used in particular bootstrap assumption \eqref{eq:disp2boot}. Together with the bootstrap assumption \eqref{eq:ener2boot}, we conclude that we can split 
\[
\Gamma_1^2\tilde{\psi} = (\Gamma_1^2\tilde{\psi})_1 + (\Gamma_1^2\tilde{\psi})_2
\]
with the desired properties. \\
The proof of the last assertion of the lemma follows from (3) in the preceding proof. 
\end{proof}

We now continue with the proof of \eqref{eq:ener2}, our main tools being Proposition~\ref{eq:keyest1}, Proposition~\ref{eq:keyest2}. Write the equation for $\tilde{\psi}$ as before in the form 
\[
-\partial_t^{2}\tilde{\psi} + \partial_y^2\tilde{\psi}+ \frac{1}{2}\frac{3+\frac{y^2}{2}}{(1+y^2)^2}\tilde{\psi} = P_cG,
\]
where 
\[
G = (1+y^2)^{\frac{1}{4}}F(\phi , \nabla\phi ,\nabla^2\phi ).
\]
We decompose $G$ into its weighted part $G_1$ (terms with weights at least $\langle y\rangle^{-2}$), as well as the pure cubic part $G_2$, 
\[
G = G_1 + G_2.
\]
Use the bound
\begin{align*}
\big|\nabla_{t,y}^{\beta}\Gamma_2^\kappa G_1\big|&\lesssim \sum_{\substack{\kappa_1+\kappa_2\leq \kappa\\|\beta_1|+|\beta_2|\leq |\beta|+2}}\big|\frac{\nabla_{t,y}^{\beta_1}\Gamma_2^{\kappa_1}\phi \nabla_{t,y}^{\beta_2}\Gamma_2^{\kappa_2}\phi }{(1+y^2)^{\frac{3}{4}}}\big|.
\end{align*}
According to \eqref{eq:weightedenergy2}, we need to bound the right-hand side in $\big\|\cdot\big\|_{L_t^1(L^2_{dy}\cap L^1_{\langle y\rangle^{\eps}dy})}$. Start with the case of less than top-level derivatives, $|\beta|+\kappa\leq N_1-1$. When $\kappa = 1$, in view of bootstrap assumptions \eqref{eq:disp2boot} and \eqref{eq:localenerboot}, the above expression is bounded by 
\begin{eqnarray}\label{eq:technical03}
&&\big\|\cdot\big\|_{L_t^1(L^2_{dy}\cap L^1_{\langle y\rangle^{\eps_*}dy})([0,T])}\\
\nonumber&\lesssim& \big\|\frac{\langle\nabla_{t,y}\rangle^{\frac{N_1}{2}}\phi}{\langle y\rangle^{\frac{1}{2}-}}\big\|_{L_t^2 L_y^\infty}\big\|\langle y\log y\rangle^{-1}\langle\nabla_{t,y}\rangle^{N_1}\langle\Gamma_2\rangle\tilde{\phi}\big\|_{L^2_{y,t}([0,T])}\\
\nonumber&& +\sum_{\beta_2<\frac{N_1}{2}}\big\|\langle y\log y\rangle^{-1}\langle\nabla_{t,y}\rangle^{N_1}\tilde{\phi}\big\|_{L^2_{y,t}([0,T])}\big\|\langle y\log y\rangle^{-1}\nabla_{t,y}^{\beta_2}\langle\Gamma_2\rangle\tilde{\phi}\big\|_{L^2_{y,t}([0,T])}\\
\nonumber&\lesssim& \eps^2\langle
T\rangle^{(1+[\frac{2|\beta|}{N_1}])10\nu} + \eps^2\langle
T\rangle^{11\nu}\mathbf{1}_{\{|\beta|>\frac{N_1}{2}\}}\\
\nonumber&\lesssim& \eps^2\langle T\rangle^{(1+[\frac{2|\beta|}{N_1}])10\nu},
\end{eqnarray}
as required. \\

The case $\kappa = 2$ is estimated, in view of bootstrap assumptions \eqref{eq:disp2boot} and \eqref{eq:localenerboot}, as follows 
\begin{eqnarray}\label{eq:technical04}
&&\big\|\cdot\big\|_{L_t^1(L^2_{dy}\cap L^1_{\langle y\rangle^{\eps_*}dy})([0,T])}\\
\nonumber&\lesssim& \big\|\frac{\langle\nabla_{t,y}\rangle^{\frac{N_1}{2}}\phi}{\langle y\rangle^{\frac{1}{2}-}}\big\|_{L_t^2 L_y^\infty}\big\|\langle y\log y\rangle^{-1}\langle\nabla_{t,y}\rangle^{N_1-1}\langle\Gamma_2\rangle^2\tilde{\phi}\big\|_{L^2_{y,t}([0,T])}\\
\nonumber&& +\sum_{\beta_2<\frac{N_1}{2}-1}\big\|\langle y\log y\rangle^{-1}\langle\nabla_{t,y}\rangle^{N_1-1}\tilde{\phi}\big\|_{L^2_{y,t}([0,T])}\big\|\langle y\log y\rangle^{-1}\nabla_{t,y}^{\beta_2}\langle\Gamma_2\rangle^2\tilde{\phi}\big\|_{L^2_{y,t}([0,T])}\\
\nonumber&&+\sum_{|\beta_1|+|\beta_2|\leq N_1-1}\prod_{j=1,2}\big\|\langle y \log y\rangle^{-1}\nabla_{t,y}^{\beta_j}\langle\Gamma_2\rangle\tilde{\phi}\big\|_{L^2_{y,t}([0,T])}\\
\nonumber&\lesssim& \eps^2\langle
T\rangle^{(1+[\frac{2|\beta|}{N_1}])100\nu} + \eps^2\langle
T\rangle^{101\nu}\mathbf{1}_{\{|\beta|>\frac{N_1}{2}\}} + \eps^2\langle T\rangle^{(2+[\frac{2|\beta|}{N_1}])10\nu}\\
\nonumber&\lesssim& \eps^2\langle T\rangle^{(1+[\frac{2|\beta|}{N_1}])100\nu},
\end{eqnarray} 
as required. \\

The case of top level derivatives $|\beta|+\kappa = N_1$ is treated as
in Section \ref{sect:ener1toporder} via integration by parts and induction on the number of $y$-derivatives, and omitted. 
\\

This leads us to the problem of bounding the contribution of the pure
cubic terms $G_2$. By using the inherent gradient structure
\eqref{eq:keygradientstructure2}, as well as the estimates \eqref{eq:weightedenergy2}, \eqref{eq:weightedenergy3} and \eqref{eq:weightedenergy4}, 
we reduce to bounding the schematic expressions 
\begin{align*}
&\big\|\nabla_{t,y}^{\beta}\langle\Gamma_2\rangle^{\kappa}\big(\phi_{t,y}^2\tilde{\phi}_t\big)\big\|_{L_t^1L^2_{dy}},\,\big\|\langle y\rangle^{-1}\langle\Gamma_2\rangle^{\kappa}\nabla_{t,y}^{\beta}\big(\phi_{t,y}^2\tilde{\phi}_t\big)\big\|_{L_t^1(L^2_{dy}\cap L^1_{\langle y\rangle^{\eps_*}dy})},\\&\big\|\langle y\rangle^{\frac{1}{2}}\langle\Gamma_2\rangle^{\kappa}\nabla_{t,y}^{\beta}\big(\phi_t^2(\phi_{yy}-\phi_{tt})\big)\big\|_{L_t^1(L^2_{dy}\cap L^1_{\langle y\rangle^{\eps_*}dy})}. 
\end{align*}
We shall only consider the case of non-top order derivatives, i.\ e.\ $|\beta|+\kappa<N_1$, since the remaining case is again handled via the energy identity and the integration by parts trick to reduce to the case of lower order derivatives. We treat the above terms separately: 
\\

{\it{(1): the bound for $\big\|\nabla_{t,y}^{\beta}\langle\Gamma_2\rangle^{\kappa}\big(\phi_{t,y}^2\tilde{\phi}_t\big)\big\|_{L_t^1L^2_{dy}([0,T])}$.}} Start with the case $\kappa = 1$, $|\beta|< \frac{N_1}{2}$. We have
\begin{equation}\label{eq:technical4}\begin{split}
\big\|\nabla_{t,y}^{\beta}\langle\Gamma_2\rangle^{\kappa}\big(\phi_{t,y}^2\tilde{\phi}_t\big)\big\|_{L_t^1L^2_{dy}([0,T])}&\lesssim \sum_{\sum\beta_j = \beta}\big\|\nabla_{t,y}^{\beta_1}\langle\Gamma_2\rangle\phi_{t,y}\nabla_{t,y}^{\beta_2}\phi_{t,y}\nabla_{t,y}^{\beta_3}\tilde{\phi}_t\big\|_{L_t^1L^2_{dy}([0,T])}\\
&+\sum_{\sum\beta_j = \beta}\big\|\nabla_{t,y}^{\beta_1}\phi_{t,y}\nabla_{t,y}^{\beta_2}\phi_{t,y}\nabla_{t,y}^{\beta_3}\langle\Gamma_2\rangle\tilde{\phi}_t\big\|_{L_t^1L^2_{dy}([0,T])}\\
&\lesssim \sum_{\sum\beta_j \leq \beta}\big\|\nabla_{t,y}^{\beta_1}\langle\Gamma_2\rangle\tilde{\phi}_{t,y}\nabla_{t,y}^{\beta_2}\phi_{t,y}\nabla_{t,y}^{\beta_3}\phi_t\big\|_{L_t^1L^2_{dy}([0,T])}\\
&+\sum_{\sum\beta_j \leq \beta}\big\|\langle y\rangle^{-1}\nabla_{t,y}^{\beta_1}\langle\Gamma_2\rangle\tilde{\phi}\nabla_{t,y}^{\beta_2}\phi_{t,y}\nabla_{t,y}^{\beta_3}\phi_t\big\|_{L_t^1L^2_{dy}([0,T])}.
\end{split}\end{equation}
We estimate the first term in the right-hand side of \eqref{eq:technical4}. Using the bootstrap assumptions \eqref{eq:disp1boot} and \eqref{eq:ener2boot}, we have
\begin{align*}
&\sum_{\sum\beta_j \leq \beta}\big\|\nabla_{t,y}^{\beta_1}\langle\Gamma_2\rangle\tilde{\phi}_{t,y}\nabla_{t,y}^{\beta_2}\phi_{t,y}\nabla_{t,y}^{\beta_3}\phi_t\big\|_{L_t^1L^2_{dy}([0,T])}\\&\lesssim \sum_{\sum\beta_j \leq \beta}\big\|\langle t\rangle^{-10\nu}\nabla_{t,y}^{\beta_1}\langle\Gamma_2\rangle\tilde{\phi}_{t,y}\big\|_{L_t^\infty L_y^2([0,T])}\big\|\langle t\rangle^{10\nu}\nabla_{t,y}^{\beta_2}\phi_{t,y}\nabla_{t,y}^{\beta_3}\phi_t\big\|_{L_t^1 L_y^\infty([0,T])}\\
&\lesssim \frac{\eps^3}{\nu} \langle T\rangle^{10\nu}\lesssim \eps^2 \langle T\rangle^{10\nu}.
\end{align*}
To estimate the second term in \eqref{eq:technical4}, we use the local energy bound \eqref{eq:localenerboot}. Write
\begin{equation}\label{eq:technical00}\begin{split}
\big|\nabla_{t,y}^{\beta_2}\phi_{t,y}\big|& = \big|\nabla_{t,y}^{\beta_2}\big(\langle y\rangle^{-\frac{3}{2}}\tilde{\phi})\big| + \big|\nabla_{t,y}^{\beta_2}\big(\langle y\rangle^{-\frac{1}{2}}\tilde{\phi}_y\big)\big|\\
&\leq \sum_{|\tilde{\beta}_2|\leq |\beta_2|}\big[\big|\langle y\rangle^{-\frac{3}{2}}\nabla_{t,y}^{\tilde{\beta}_2}\tilde{\phi}\big| + \big|\langle y\rangle^{-\frac{1}{2}}\nabla_{t,y}^{\tilde{\beta}_2}\tilde{\phi}_y\big|\big],
\end{split}\end{equation}
and so
\begin{align*}
&\big\|\langle y\rangle^{-1}\nabla_{t,y}^{\beta_1}\langle\Gamma_2\rangle\tilde{\phi}\nabla_{t,y}^{\beta_2}\phi_{t,y}\nabla_{t,y}^{\beta_3}\phi_t\big\|_{L_t^1L^2_{dy}([0,T])}\\
&\lesssim \big\|\langle y\log y\rangle^{-1}\nabla_{t,y}^{\beta_1}\langle\Gamma_2\rangle\tilde{\phi}\big\|_{L_{t,y}^2([0,T])}\big(\sum_{|\tilde{\beta}_2|\leq |\beta_2|+1}\big\|\langle t\rangle^{-\nu}\langle\nabla_y\rangle\nabla_{t,y}^{\tilde{\beta}_2}\tilde{\phi}\big\|_{L_t^\infty L^\infty_{y}([0,T])}\big)\\
&\hspace{7cm}\cdot\big\|\langle t\rangle^{\nu}\langle\log y\rangle \langle y\rangle^{-\frac{1}{2}}\nabla_{t,y}^{\beta_3}\phi_t\big\|_{L_{t}^2L_y^\infty}\\
&\lesssim \eps^3\langle T\rangle^{10\nu},
\end{align*}
where we used the bootstrap assumption \eqref{eq:localenerboot} for the first term, the bootstrap assumption \eqref{eq:disp2boot} and interpolation for the last term, and the embedding $H^1(\R)\subset L^{\infty}(\R)$ and the bootstrap assumption \eqref{eq:ener1boot} for the middle term.\\

We continue with the case $\kappa = 1, |\beta|\geq\frac{N_1}{2}$. Again using \eqref{eq:technical4}, there may now be terms where only one of the three factors may be bounded in  $L_{t,y}^\infty$. Start with the first term, and assume $|\beta_2|>\frac{N_1}{2}$ (as we may by symmetry and since else we can argue as in the previous bounds). Then distinguish between the following two situations: 
\\

{\it{(a): $y\ll t$.}} Here the trick is to use the identities
\[
\tilde{\phi}_t = \frac{ t\Gamma_2\tilde{\phi} - y\Gamma_1\tilde{\phi}}{t^2 - y^2},\,\tilde{\phi}_y = \frac{ t\Gamma_1\tilde{\phi} - y\Gamma_2\tilde{\phi}}{t^2 - y^2}
\]
which imply 
\begin{equation}\label{eq:estimphiyllt}
\big|\nabla_{t,y}^{\beta_2}\phi_{t,y}\big|\lesssim \langle y\rangle^{-\frac{3}{2}}\big|\tilde{\phi}\big| + t^{-1}\langle y\rangle^{-\frac{1}{2}}\sum_{\substack{|\tilde{\beta}_2|< |\beta_2|\\\Gamma = \Gamma_{1,2}}}\big|\nabla_{t,y}^{\tilde{\beta}_2}\Gamma\tilde{\phi}\big|.
\end{equation}
To estimate the term $\big|\nabla_{t,y}^{\tilde{\beta}_2}\Gamma\tilde{\phi}\big|$, we observe $\Gamma_1\tilde{\phi}(t, 0) = 0$, whence using Lemma~\ref{lem:3} we get 
\[
\big|\nabla_{t,y}^{\tilde{\beta}_2}\Gamma_1\tilde{\phi}(t, y)\big|\lesssim \eps\langle y\rangle^{\frac{1}{2}}\langle t\rangle^{\frac{1}{2}-\delta_1}.
\]
We also have (see Lemma \ref{lem:4})
\[
\big|\nabla_{t,y}^{\beta}\Gamma_2^{\kappa}\tilde{\psi}\big|(t,y)\lesssim \eps\langle t\rangle^{(1+[\frac{2|\beta|}{N_1}])10^{\kappa}\nu}\langle y\rangle^{\frac{1}{2}},\,|\beta|+\kappa\leq N_1.
\]
The previous observations imply that 
\[
\big|\nabla_{t,y}^{\beta_2}\phi_{t,y}\big|\lesssim \langle y\rangle^{-\frac{3}{2}}\big|\tilde{\phi}\big| + \eps t^{-\frac{1}{2}-\delta_1}\lesssim \eps t^{-\frac{1}{2}-\delta_1},\,y\ll t,
\]
with a similar bound applying to $\nabla_{t,y}^{\beta_3}\phi_{t}$. But then we easily get
\begin{align*}
&\big\|\nabla_{t,y}^{\beta_1}\langle\Gamma_2\rangle\tilde{\phi}_{t,y}\nabla_{t,y}^{\beta_2}\phi_{t,y}\nabla_{t,y}^{\beta_3}\phi_t\big\|_{L_t^1L^2_{dy}(y\ll t)}\\
&\lesssim \big\|\langle t\rangle^{-10\nu}\nabla_{t,y}^{\beta_1}\langle\Gamma_2\rangle\tilde{\phi}_{t,y}\big\|_{L_t^\infty L_y^2}\big\|\langle t\rangle^{10\nu}\nabla_{t,y}^{\beta_2}\phi_{t,y}\big\|_{L_t^{2}L_{y}^\infty(y\ll t)}\big\|\nabla_{t,y}^{\beta_3}\phi_t\big\|_{L_t^{2}L_{y}^\infty(y\ll t)}\\
&\lesssim \frac{\eps^3}{\delta_1-10\nu}\lesssim \eps^2.
\end{align*}
The remaining term in \eqref{eq:technical4} is treated similarly. 
\\

{\it{(b): $y\gtrsim t$.}} Here we may of course assume $y\sim t$, since the case $y\gg t$ is handled just like {\it(a)}. Note that from \eqref{eq:technical00}, we get using also the Sobolev embedding $H^1_{ydy}(\R)\hookrightarrow L^\infty(\R)$ and the bootstrap assumption \eqref{eq:ener1boot}
\begin{equation}\label{eq:neartheconeestimate}
\big|\nabla_{t,y}^{\beta_2}\phi_{t,y}\big|\lesssim \eps t^{\nu-\frac{1}{2}},\,y\sim t,
\end{equation}
and so the a priori bounds imply 
\begin{align*}
&\big\|\nabla_{t,y}^{\beta_1}\langle\Gamma_2\rangle\tilde{\phi}_{t,y}\nabla_{t,y}^{\beta_2}\phi_{t,y}\nabla_{t,y}^{\beta_3}\phi_t\big\|_{L_t^1L^2_{dy}(y\sim t),\,t\in [0,T]}\\
&\lesssim \big\|\nabla_{t,y}^{\beta_1}\langle\Gamma_2\rangle\tilde{\phi}_{t,y}\big\|_{L_t^\infty L_y^2[0,T]}\big\|\nabla_{t,y}^{\beta_2}\phi_{t,y}\nabla_{t,y}^{\beta_3}\phi_t\big\|_{L_t^1 L_y^\infty([0,T]}\lesssim \frac{\eps^3}{\nu}\langle T\rangle^{11\nu}\leq \eps^2\langle T\rangle^{20\nu}
\end{align*}
which is the required bound. 
\\

For the second term in \eqref{eq:technical4}, again assuming that $|\beta_2|\geq\frac{N_1}{2}$, we get using \eqref{eq:neartheconeestimate} and the bootstrap assumptions \eqref{eq:disp1boot} and \eqref{eq:localenerboot}(and restricting to $y\sim t$)
\begin{align*}
&\big\|\langle y\rangle^{-1}\nabla_{t,y}^{\beta_1}\langle\Gamma_2\rangle\tilde{\phi}\nabla_{t,y}^{\beta_2}\phi_{t,y}\nabla_{t,y}^{\beta_3}\phi_t\big\|_{L_t^1L^2_{dy}([0,T])}\\
&\lesssim \big\|\langle\log y\rangle^{-1}\langle y\rangle^{-1}\nabla_{t,y}^{\beta_1}\langle\Gamma_2\rangle\tilde{\phi}\big\|_{L_{t,y}^2([0,T])}\big\|\langle \log y\rangle \nabla_{t,y}^{\beta_2}\phi_{t,y}\big\|_{L_{t,y}^\infty}\big\|\nabla_{t,y}^{\beta_3}\phi_t\big\|_{L_{t}^2L_y^\infty([0,T])}\\
&\lesssim \eps^3  \langle \log T\rangle^{\frac{1}{2}}T^{11\nu},
\end{align*}
which is much better than the bound $\eps\langle T\rangle^{20\nu}$ we need. 
\\

This completes the case $\kappa = 1$ for {\it(1)}. For the case $\kappa = 2$, one proceeds analogously, but now also encounters terms of the form 
\[
\nabla_{t,y}^{\beta_1}\langle\Gamma_2\rangle\phi_{t,y}\nabla_{t,y}^{\beta_2}\langle\Gamma_2\rangle\phi_{t,y}\nabla_{t,y}^{\beta_3}\tilde{\phi}_t,
\]
In the region $y\ll t$ or $y\gg t$, we can proceed for it like in (a) above, applied to the factor $\nabla_{t,y}^{\beta_3}\tilde{\phi}_t$. In the region $y\sim t$, 
one uses 
\[
\big\|\nabla_{t,y}^{\beta_2}\langle\Gamma_2\rangle\phi_{t,y}\nabla_{t,y}^{\beta_3}\phi_t\big\|_{L_t^1 L_y^\infty([0,T])}\lesssim \eps^2\langle T\rangle^{21\nu};
\]
We omit the simple details. 
\\

{\it{(2): the bound for $\big\|\langle y\rangle^{-1}\langle\Gamma_2\rangle^{\kappa}\nabla_{t,y}^{\beta}\big(\phi_{t,y}^2\tilde{\phi}_t\big)\big\|_{L_t^1(L^2_{dy}\cap L^1_{\langle y\rangle^{\eps_*}dy})([0,T])}$.}} The $L_t^1L_y^2$-norm corresponds exactly to the second term in \eqref{eq:technical4} (if $\kappa = 1$, and analogous with $\kappa = 2$), and is easier than the  $L^1$-type bound. Thus consider now the (modified) $L^1_{dy}$-norm. From \eqref{eq:estimphiyllt} and a straightforward modification, we get 
\begin{align*}
&\big|\nabla_{t,y}^{\beta_2}\langle\Gamma_2\rangle\phi_{t,y}\big|\\&\lesssim \langle y\rangle^{-\frac{3}{2}}\big|\langle\Gamma_2\rangle\tilde{\phi}\big| + (\max\{t,y\})^{-1}\langle y\rangle^{-\frac{1}{2}}\sum_{\substack{|\tilde{\beta}_2|< |\beta_2|\\\Gamma = \Gamma_{1,2}}}\big|\nabla_{t,y}^{\tilde{\beta}_2}\Gamma\langle\Gamma_2\rangle\tilde{\phi}\big|,\,y\ll t\,\text{or}\,y\gg t,
\end{align*}
while from \eqref{eq:technical00} we get 
\begin{align*}
\big|\nabla_{t,y}^{\beta_2}\langle\Gamma_2\rangle\phi_{t,y}\big|
&\leq \sum_{|\tilde{\beta}_2|\leq |\beta_2|}\big[\big|\langle y\rangle^{-\frac{3}{2}}\nabla_{t,y}^{\tilde{\beta}_2}\langle\Gamma_2\rangle\tilde{\phi}\big| + \big|\langle y\rangle^{-\frac{1}{2}}\nabla_{t,y}^{\tilde{\beta}_2}\langle\Gamma_2\rangle\tilde{\phi}_y\big|\big]
\end{align*}
which is useful in the region $y\sim t$. Using Lemma \ref{lem:subtle} and the bootstrap assumption \eqref{eq:ener2boot}, we infer 
\begin{align*}
\big|\langle \log y\rangle\langle y\rangle^{\frac{1}{2}+\eps_*}\nabla_{t,y}^{\beta_2}\langle\Gamma_2\rangle\phi_{t,y}\big|
&\leq \eps \langle t\rangle^{\eps_*+(1+[\frac{2|\beta|}{N_1}])10\nu+}
\end{align*}

If we now write (as usual $\kappa\in \{1,2\}$)
\begin{align*}
&\langle y\rangle^{-1}\langle\Gamma_2\rangle^{\kappa}\nabla_{t,y}^{\beta}\big(\phi_{t,y}^2\tilde{\phi}_t\big)\\& = \sum_{\substack{\sum\kappa_j = \kappa,\,\kappa_2\leq \min\{1,\kappa_1\}\\\sum \beta_j = \beta}}\langle y\rangle^{-1}(\nabla_{t,y}^{\beta_1}\langle\Gamma_2\rangle^{\kappa_1}\phi_{t,y})(\nabla_{t,y}^{\beta_2}\langle\Gamma_2\rangle^{\kappa_2}\phi_{t,y})(\nabla_{t,y}^{\beta_3}\langle\Gamma_2\rangle^{\kappa_3}\tilde{\phi}_t)
\end{align*}
then if $\kappa_3 = 1$, $\kappa_1 = 1$, we get 
\begin{align*}
&\big\|\cdot\big\|_{L_t^1 L^1_{\langle y\rangle^{\eps_*}dy}([0,T])}\\&\lesssim \big\|\langle y\rangle^{\eps_*+\frac{1}{2}}\langle\log y\rangle\nabla_{t,y}^{\beta_1}\langle\Gamma_2\rangle\phi_{t,y}\big\|_{L_{t,y}^\infty([0,T])}\big\|\frac{\nabla_{t,y}^{\beta_2}\phi_{t,y}}{\langle\log y\rangle\langle y\rangle^{\frac{1}{2}}}\big\|_{ L_{t,y}^2([0,T])}\big\|\frac{\nabla_{t,y}^{\beta_3}\langle\Gamma_2\rangle\tilde{\phi}_{t}}{\langle y\rangle \langle \log y\rangle}\big\|_{L_{t,y}^2([0,T])}\\
&\lesssim \frac{\eps^3}{\nu}\langle T\rangle^{\eps_*+42\nu}\lesssim \eps^2\langle T\rangle^{100\nu}
\end{align*}
which is as desired; we have used the preceding considerations to bound the first factor. On the other hand, when $\kappa_3 = 2$, we obtain the bound 
\begin{align*}
&\big\|\cdot\big\|_{L_t^1 L^1_{\langle y\rangle^{\eps_*}dy}([0,T])}\\&\lesssim \big\|\langle y\rangle^{\eps_*-\frac{1}{2}}\langle\log y\rangle\nabla_{t,y}^{\beta_1}\phi_{t,y}\nabla_{t,y}^{\beta_2}\tilde{\phi}_{t,y}\big\|_{L_{t,y}^2([0,T])}\big\|\frac{\nabla_{t,y}^{\beta_3}\langle\Gamma_2\rangle^{2}\tilde{\phi}_{t}}{\langle y\rangle \langle \log y\rangle}\big\|_{L_{t,y}^2([0,T])}\\
&\lesssim \eps^3\langle T\rangle^{(1+[\frac{2|\beta|}{N_1}])100\nu}. 
\end{align*}
The remaining combinations are handled similarly and this completes the estimate {\it(2)}. 
\\

{\it{(3): the bound for $\big\|\langle y\rangle^{\frac{1}{2}}\langle\Gamma_2\rangle^{\kappa}\nabla_{t,y}^{\beta}\big(\phi_t^2(\phi_{yy}-\phi_{tt})\big)\big\|_{L_t^1(L^2_{dy}\cap L^1_{\langle y\rangle^{\eps_*}dy})}$.}} Here we use the equation for $\phi$. This produces a term just like in {\it{(2)}}, as well as a further linear term of the form 
\[
\langle y\rangle^{-2}\langle\Gamma_2\rangle^{\kappa}\nabla_{t,y}^{\beta}\big(\phi_t^2\tilde{\phi}\big).
\]
This term is handled like in {\it{(2)}} if we note that 
\begin{align*}
\big\|\langle y\rangle^{-1}\tilde{\phi}\big\|_{L^2_{dy}}&\lesssim \big\|\langle y\rangle^{-1}\tilde{\phi}(0)\big\|_{L^2_{dy}} + \big\||\langle y\rangle^{-1}[\tilde{\phi}(y)-\tilde{\phi}(0)]\big\|_{L^2_{dy}}\\
&\lesssim \big\|\langle y\rangle^{-1}\tilde{\phi}(0)\big\|_{L^2_{dy}} + \big\|\tilde{\phi}_y\big\|_{L_{dy}^2}\lesssim \eps\langle t\rangle^{\nu}.
\end{align*}
This then allows us to reduce the above expression to the following crude schematic form 
\[
\big\|\langle y\rangle^{\frac{1}{2}}\langle\Gamma_2\rangle^{\kappa}\nabla_{t,y}^{\beta}\big(\phi_t^2[\langle y\rangle^{-2}\phi^2 + \phi^3]\big)\big\|_{L_t^1(L^2_{dy}\cap L^1_{\langle y\rangle^{\eps_*}dy})}
\]
which is straightforward to estimate by $\lesssim \eps^4$. 
\\

This concludes the proof of \eqref{eq:ener2}.


\section{Local energy decay}\label{sec:localenergydecay}


The goal of this section is to prove the local energy decay \eqref{eq:localener} for which we use Proposition~\ref{eq:keyest3}. This follows essentially along the same lines as the proof of the estimate \eqref{eq:ener2}, except in the case of top level derivatives, which have to be treated differently. 
\\

{\it(1): derivatives below top degree: $|\beta|+\gamma\leq N_1$ (referring to \eqref{eq:localener}).} We follow the same pattern as in the preceding proof, except that now the 'bad norm' $L^1_{\langle y\rangle^{\eps}dy}$ is replaced by $L^2_{\langle y\rangle^{1+}dy}$. Using the equation for $\tilde{\psi}$ as in the preceding proof and splitting the source into 
\[
G = G_1 + G_2,
\]
we see that in order to control the contribution from $G_1$, we have to bound
$$\sum_{\substack{\kappa_1+\kappa_2\leq \kappa\\|\beta_1|+|\beta_2|\leq |\beta|+2}}\big\|\frac{\nabla_{t,y}^{\beta_1}\Gamma_2^{\kappa_1}\phi \nabla_{t,y}^{\beta_2}\Gamma_2^{\kappa_2}\phi }{(1+y^2)^{\frac{3}{4}}}\big\|_{L_t^1 L^2_{\langle y\rangle^{1+}dy}}.$$
In fact, note that in \eqref{eq:technical03} we obtain $L^1_{dy}$-control by sacrificing one factor $\langle y\rangle^{-\frac{1}{2}}$, and so the $L^2_{\langle y\rangle^{1+}dy}$-norm of the above expressions is bounded exactly by  \eqref{eq:technical03}, \eqref{eq:technical04} (corresponding to $\kappa = 1,2$). 
The same comment applies to the non-gradient terms constituting $G_2$, which can hence be estimated just like in {\it(1)} - {\it{(3)}} of the proof of \eqref{eq:ener2} above. 
\\

{\it(2): derivatives of top degree: $|\beta|+\gamma=N_1+1$ (referring to \eqref{eq:localener}).} The idea is to again use an inductive argument to reduce to the case of lower order derivatives. This time a simple integration by parts argument seems to no longer work, and we instead use an approximate parametrix to express the top order derivative terms. Specifically, assume $\beta+\gamma = N_1$, and consider the expression $\partial_t^\beta\Gamma_2^{\gamma}\tilde{\psi}$. This satisfies the following equation
\begin{eqnarray}\label{eq:technical6}
&&-\partial_t^{2}(\partial_t^\beta\Gamma_2^{\gamma}\tilde{\psi}) + \partial_y^2(\partial_t^\beta\Gamma_2^{\gamma}\tilde{\psi})+ \frac{1}{2}\frac{3+\frac{y^2}{2}}{(1+y^2)^2}(\partial_t^\beta\Gamma_2^{\gamma}\tilde{\psi})\\
\nonumber &=& \partial_t^\beta\Gamma_2^{\gamma}(P_cG) + \pr_t^\beta[\square,\Ga_2^\kappa]\tpsi + \sum_{\tilde{\gamma}<\gamma}V_{\tilde{\gamma}}\partial_t^\beta\Gamma_2^{\tilde{\gamma}}\tilde{\psi}
\end{eqnarray}
where the potentials $V_{\tilde{\gamma}}$ are of the schematic form 
\[
V_{\tilde{\gamma}} = (y\partial_y)^{\gamma - \tilde{\gamma}}\big(\frac{1}{2}\frac{3+\frac{y^2}{2}}{(1+y^2)^2}\big).
\]
Our goal is to derive an a priori bound for 
\begin{equation}\label{eq:toporderlocalen}
\big\|\langle\log y\rangle^{-1}\langle y\rangle^{-1}\nabla_{t,y}\partial_t^\beta\Gamma_2^{\gamma}\tilde{\psi}\big\|_{L_{t,y}^2([0,T])}.
\end{equation}
To this end, we shall express $\partial_t^\beta\Gamma_2^{\gamma}\tilde{\psi}$ via an approximate representation formula (a parametrix) based on the method of characteristics (as we are essentially in $1+1$-dimensions), 
{\it{taking the smaller top order terms in $\partial_t^\beta\Gamma_2^{\gamma}(P_cG)$ into account}}. To start with, write\[
\partial_t^\beta\Gamma_2^{\gamma}(P_cG) = \partial_t^\beta\Gamma_2^{\gamma}(G) - \partial_t^\beta\Gamma_2^{\gamma}(P_dG),
\]
where the error term $\partial_t^\beta\Gamma_2^{\gamma}(P_dG)$ is effectively a lower order term. Then collecting all the top order derivative terms contained in 
\[
\partial_t^\beta\Gamma_2^{\gamma}(G),
\]
we re-cast the equation \eqref{eq:technical6} in the form (we normalize the first coefficient to be equal to $1$, thereby introducing the factor $\kappa(t,y)$ on the right)
\begin{equation}\label{eq:technical007}
-\partial_t^2(\partial_t^\beta\Gamma_2^{\gamma})\tilde{\psi} + g_1(\phi, \nabla\phi)\partial_y^2(\partial_t^\beta\Gamma_2^{\gamma}\tilde{\psi}) + g_2(\phi, \nabla\phi)\partial_{ty}(\partial_t^\beta\Gamma_2^{\gamma}\tilde{\psi}) = \kappa(t, y)H,
\end{equation}
with 
\begin{eqnarray*}
H & = & -\frac{1}{2}\frac{3+\frac{y^2}{2}}{(1+y^2)^2}(\partial_t^\beta\Gamma_2^{\gamma}\tilde{\psi}) - \partial_t^\beta\Gamma_2^{\gamma}(P_dG) + \widetilde{\big(\partial_t^\beta\Gamma_2^{\gamma}(G)\big)} + \pr_t^\beta[\square,\Ga_2^\kappa]\tpsi  \\
&&+ \sum_{\tilde{\gamma}<\gamma}V_{\tilde{\gamma}}\partial_t^\beta\Gamma_2^{\tilde{\gamma}}\tilde{\psi}
\end{eqnarray*}
where  $ \widetilde{\big(\partial_t^\beta\Gamma_2^{\gamma}(G)\big)}$
denotes all non-top order terms, while the top order terms (i.\ e.\ when $\partial_t^\beta\Gamma_2^{\gamma}$ falls on a second derivative term in $G$) have been moved to the left. 
Note in particular that 
\begin{align*}
&g_1(\phi, \nabla\phi) = 1+O(\frac{\phi}{1+y^2} + [\nabla_{t,y}\phi]^2),\,g_2(\phi, \nabla\phi) = O(\frac{\phi}{1+y^2} + [\nabla_{t,y}\phi]^2),\\&\kappa(t,y) = 1+O(\frac{\phi}{1+y^2} + [\nabla_{t,y}\phi]^2)
 \end{align*}
Then we approximately factorize the left hand side of \eqref{eq:technical007} as follows:
\begin{align*}
&-\partial_t^2\tilde{\psi} + g_1(\phi, \nabla\phi)\partial_y^2\tilde{\psi} + g_2(\phi, \nabla\phi)\partial_{ty}\tilde{\psi}\\& = (-\partial_t - h_1(\phi, \nabla\phi)\partial_y)(\partial_t - h_2(\phi, \nabla\phi)\partial_y)\tilde{\psi}
 - h_1(\phi, \nabla\phi)\pr_y(h_2(\phi, \nabla\phi))\partial_y\tilde{\psi}
\\&- \partial_t(h_2(\phi, \nabla\phi))\partial_y\tilde{\psi}
\end{align*}
 where the functions $h_{1,2}$ are chosen to satisfy 
 \[
 -h_1 + h_2 = g_2(\phi, \nabla\phi),\,h_1 h_2 = g_1(\phi, \nabla\phi)
 \]
 whence 
 \[
 h_{1,2} = 1 + O(\frac{\phi}{1+y^2} + [\nabla_{t,y}\phi]^2).
 \]
 Hence we obtain from \eqref{eq:technical007} the relation 
 \begin{equation}\label{eq:technical8}\begin{split}
 &(-\partial_t - h_1(\phi, \nabla\phi)\partial_y)(\partial_t - h_2(\phi, \nabla\phi)\partial_y)\tilde{\psi}\\& = 
  h_1(\phi, \nabla\phi)(\partial_y h_2(\phi, \nabla\phi))\partial_y\tilde{\psi}+(\partial_t h_2(\phi, \nabla\phi))\partial_y\tilde{\psi} + H =:H_1.
 \end{split}\end{equation}
This is the equation we solve approximately via the method of characteristics. Precisely, introduce the functions $\lambda_{1,2}(s;t,y)$ via the ODEs
 \begin{equation}\label{eq:lambda1}
 \partial_s\lambda_1(s;t,y) = h_1(\phi, \nabla\phi)(s, \lambda_1(s;t,y)),\,\lambda_1(t;t,y) = y,
 \end{equation}
  \begin{equation}\label{eq:lambda2}
 \partial_s\lambda_2(s;t,y) = -h_2(\phi, \nabla\phi)(s, \lambda_2(s;t,y)),\,\lambda_2(t;t,y) = y.
 \end{equation}
Note that from our a priori bounds, we get the crude asymptotic
\[
\lambda_{1,2}(s;t,y) = y\mp(t-s) + O(\eps(t-s)^{\frac{1}{2}-\delta_1})
\]
Then we introduce the following approximate parametrix for the problem associated with \eqref{eq:technical8}: 
\begin{lem}\label{lem:aproxparametrix}
Let $f$, $g$ and $\tH$ three given scalar functions. Let $S[f,g,\tH]$ be defined by
\begin{eqnarray*}
S[f,g,\tH](t,y) &=& \frac{1}{2}\big[f(\lambda_1(0;t,y)) + f(\lambda_2(0;t,y)\big]\\ 
\nonumber&&+ \int_{\lambda_1(0;t,y)}^{\lambda_2(0;t,y)}\frac{g(\tilde{y})}{(h_1+h_2)(\phi, \nabla\phi)(0,\tilde{y})}\,d\tilde{y}\\
\nonumber&&+\int_0^t \int_{\lambda_1(s;t,y)}^{\lambda_2(s;t,y)}\frac{\tH(s, \tilde{y})}{(h_1+h_2)(s,\tilde{y})}\,d\tilde{y}ds.
\end{eqnarray*}
Then, we have
\begin{eqnarray*}
S[f,g,\tH](0,y) &=& f(y),\\
\partial_tS[f,g,\tH](0,y) &=& \big((h_2-h_1)(\phi, \nabla\phi)\big)(0,y)f'(y) + g(y),
\end{eqnarray*}
and  
\begin{eqnarray*}
(-\partial_t - h_1(\phi, \nabla\phi)\partial_y)(\partial_t - h_2(\phi, \nabla\phi)\partial_y)S[f,g,\tH](t, y)= \tH+E[f,g,\tH](t,y),
\end{eqnarray*}
where the error term $E[f,g,\tH](t,y)$ is given by
\begin{eqnarray*}
&& E[f,g,\tH](t,y)\\ 
&=& \big(\partial_t  h_2(\phi, \nabla\phi) - h_1(\phi, \nabla\phi)\partial_yh_2(\phi, \nabla\phi)+h_2(\phi, \nabla\phi)\partial_yh_1(\phi, \nabla\phi)\big)(t,y)\\
&&\times\partial_y\big(f(\lambda_1(0;t,y))\big)\\
&&+\frac{ \big(\partial_t  h_2(\phi, \nabla\phi) - h_1(\phi, \nabla\phi)\partial_yh_2(\phi, \nabla\phi)+h_2(\phi, \nabla\phi)\partial_yh_1(\phi, \nabla\phi)\big)(t,y)}{(h_1+h_2)(\phi,\nabla\phi)(t, \lambda_1(0;t,y))}\\
&&\times\partial_y\lambda_1(0;t,y)g(\lambda_1(0;t,y))\\
&&+\big(\partial_t  h_2(\phi, \nabla\phi) - h_1(\phi, \nabla\phi)\partial_yh_2(\phi, \nabla\phi)+h_2(\phi, \nabla\phi)\partial_yh_1(\phi, \nabla\phi)\big)(t,y)\\
&&\times\int_0^t \partial_y\lambda_1(s;t,y)\frac{\tH(s, \lambda_1(s;t,y))}{(h_1+h_2)(s,\lambda_1(s;t,y))}\,ds.
\end{eqnarray*}
\end{lem}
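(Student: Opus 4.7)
The proof is a direct verification exploiting the characteristic structure built into $S$. The integral curves $s\mapsto \lambda_i(s;t,y)$ defined by \eqref{eq:lambda1}--\eqref{eq:lambda2} are precisely the characteristics of the two first-order factors $-\partial_t-h_1\partial_y$ and $\partial_t-h_2\partial_y$. Differentiating the boundary conditions $\lambda_i(t;t,y)=y$ in $t$ and in $y$, and noting that $\partial_t\lambda_i$ and $\partial_y\lambda_i$ satisfy the same linear variational ODE in $s$ with initial data $\mp h_i(t,y)$ and $1$ respectively at $s=t$, one obtains the key identities
\[
\partial_t\lambda_1(s;t,y) = -h_1(t,y)\,\partial_y\lambda_1(s;t,y),\qquad \partial_t\lambda_2(s;t,y) = h_2(t,y)\,\partial_y\lambda_2(s;t,y),
\]
which imply $(\partial_t+h_1\partial_y)\Phi(\lambda_1(s;t,y))=0$ and $(\partial_t-h_2\partial_y)\Phi(\lambda_2(s;t,y))=0$ for every fixed $s$ and smooth $\Phi$. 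Differentiating the first identity once more in $y$ yields the companion transport equation $(\partial_t+h_1\partial_y)\partial_y\lambda_1=-(\partial_yh_1)\partial_y\lambda_1$.

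The two initial-value identities follow immediately. At $t=0$ one has $\lambda_1(0;0,y)=\lambda_2(0;0,y)=y$, so $S(0,y)=f(y)$ and the two integrals vanish. Differentiating each piece of $S$ in $t$, evaluating at $t=0$, using the key identity at $s=t=0$, and observing that the boundary term from differentiating the Duhamel integral vanishes because its limits coincide, produces $\partial_tS(0,y)=((h_2-h_1)(0,y))f'(y)+g(y)$.

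For the main wave-operator identity, the plan is to compute $W:=(\partial_t-h_2\partial_y)S$ first. The annihilation $(\partial_t-h_2\partial_y)\Phi(\lambda_2)=0$ eliminates every $\lambda_2$ contribution, and the boundary-at-$s=t$ term in the Duhamel piece vanishes because $\lambda_1(t;t,y)=\lambda_2(t;t,y)=y$; a short calculation then yields the factored form
\[
W(t,y) = (h_1+h_2)(t,y)\,A(t,y),\qquad A = A_1+A_2+A_3,
\]
where $A_1=-\tfrac12 f'(\lambda_1(0;t,y))\partial_y\lambda_1(0;t,y)$, $A_2=g(\lambda_1(0;t,y))\partial_y\lambda_1(0;t,y)/(h_1+h_2)(0,\lambda_1(0;t,y))$, and $A_3$ is the analogous expression with $\tH$ inside the $s$-integral. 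The appearance of the weight $1/(h_1+h_2)$ in the definition of $S$ is precisely what produces this clean collapse. I then apply $-(\partial_t+h_1\partial_y)$ to $W$ via the Leibniz rule. The characteristic annihilation $(\partial_t+h_1\partial_y)\Phi(\lambda_1)=0$ kills the composed $\lambda_1$-factors, the transport identity for $\partial_y\lambda_1$ produces an extra $-(\partial_yh_1)A$ that eventually combines with the Leibniz derivative of the outer coefficient $(h_1+h_2)(t,y)$, and the only inhomogeneous contribution is the boundary value at $s=t$ of the Duhamel integrand in $A_3$, which (since $\partial_y\lambda_1(t;t,y)=1$ and $\lambda_1(t;t,y)=y$) equals $\tH(t,y)/(h_1+h_2)(t,y)$ and multiplies the outer $(h_1+h_2)(t,y)$ to produce the main $\tH$-term. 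The remaining contributions regroup, piece by piece on $A_1$, $A_2$, $A_3$, into the three displayed summands of $E[f,g,\tH]$ (in particular, in each summand the prefactor in $\partial h_i$ is exactly the combination obtained from $-(\partial_t+h_1\partial_y)(h_1+h_2)+(h_1+h_2)\partial_yh_1$).

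The main obstacle is bookkeeping rather than conceptual: one must simultaneously track $t$- and $y$-differentiation of the flow $\lambda_i$, of the prefactor $(h_1+h_2)(t,y)$, and of the weighted kernels $1/(h_1+h_2)(s,\lambda_i)$, and then verify that the delicate cancellations produce exactly the stated form of $E$ term-by-term. Once the characteristic identities and the transport equation for $\partial_y\lambda_1$ are in hand, however, no further step requires more than the chain rule and Leibniz rule.
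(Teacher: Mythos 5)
Your proposal is correct and takes essentially the same route as the paper: the two initial-value identities come from the transport relations $(\partial_t+h_1\partial_y)\lambda_1(s;t,y)=0$ and $(\partial_t-h_2\partial_y)\lambda_2(s;t,y)=0$ (which you justify via the variational ODE argument), and the wave-operator identity is the same direct check on the definition of $S$ that the paper merely asserts, which you organize more explicitly by first collapsing $(\partial_t-h_2\partial_y)S$ to $(h_1+h_2)(t,y)$ times purely $\lambda_1$-composed terms and then applying $-(\partial_t+h_1\partial_y)$ with the transport law for $\partial_y\lambda_1$. The only caveat is cosmetic and already present in the paper's own statement and proof: carrying out the bookkeeping honestly gives the coefficient $\tfrac12(h_2-h_1)(0,y)$ on $f'(y)$ at $t=0$, the inhomogeneous boundary term with sign $-\tH$, and a slightly different (but still schematically $O(\partial h_i)$) prefactor in $E$, none of which affects the subsequent estimates since only the schematic size of $E$ and of the source enters Lemma \ref{lem:5}.
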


\begin{proof}
First, we trivially have 
\[
S[f,g,\tH](0,y) = f(y),
\]
as well as 
\[
\partial_tS[f,g,\tH](0,y) = \frac{1}{2}(\partial_t\lambda_1+\partial_t\lambda_2)(0;0,y)f'(y) + g(y),
\]
where we have exploited the fact that 
\[
\big(\partial_t + h_1(\phi, \nabla\phi)(t,y)\partial_y\big)\lambda_1(s;t,y) = 0,\,\big(\partial_t - h_2(\phi, \nabla\phi)(t,y)\partial_y\big)\lambda_2(s;t,y) = 0.
\]
Together with the fact that
\begin{equation}\label{eq:paraerror1}
\frac{1}{2}(\partial_t\lambda_1+\partial_t\lambda_2)(0;0,y)f'(y) = \big((h_2-h_1)(\phi, \nabla\phi)\big)(0,y)f'(y),
\end{equation}
we deduce
$$\partial_tu(0,y) = \big((h_2-h_1)(\phi, \nabla\phi)\big)(0,y)f'(y) + g(y).$$
Finally, the statement is done by direct check on the definition of $S[f,g,\tH]$. This concludes the proof of the lemma. 
\end{proof}

Next, we estimate $S[f,g,\tH]$ and $E[f,g,\tH]$.
\begin{lem}\label{lem:5} 
Assume that $\langle y\rangle(f', g)\in L^\infty_y$, and the decomposition 
$$\tH=\tH^{(1)}+\langle y\rangle^{-2}\tH^{(2)},$$
with
$$\sup_{t\in [0,T]}\langle t\rangle^{-2\cdot 10^{\kappa}\nu+1}\|\tH^{(1)}(t, \cdot)\|_{L_y^2}+\sup_{t\in [0,T]}\langle t\rangle^{-2\cdot 10^{\kappa}\nu}\big\|\langle y\log y\rangle^{-1}\tH^{(2)}\big\|_{L_{t,y}^2([0,t])}<+\infty.$$
Then, we have the following estimate for $S[f,g,\tH]$
\begin{eqnarray*}
&&\sup_{t\in [0,T]}\langle t\rangle^{-2\cdot
10^{\kappa}\nu}\big\|\langle y\log y\rangle^{-1}\nabla_{t,y} S[f,g,\tH]\big\|_{L_{t,y}^2([0,t])}\\
&\lesssim& \|\langle y\rangle(f',g)\|_{L^\infty_y}+\frac{1}{\nu}\sup_{t\in [0,T]}\langle t\rangle^{-2\cdot 10^{\kappa}\nu+1}\|\tH^{(1)}(t, \cdot)\|_{L_y^2}\\
&&+\sup_{t\in [0,T]}\langle t\rangle^{-2\cdot 10^{\kappa}\nu}\big\|\langle y\log y\rangle^{-1}\tH^{(2)}\big\|_{L_{t,y}^2([0,t])}.
\end{eqnarray*}
Furthermore, $E[f,g,\tH]$ satisfies the following decomposition
$$E[f,g,\tH]=E^{(1)}[f,g,\tH]+\langle y\rangle^{-2}E^{(2)}[f,g,\tH],$$
where $E^{(1)}[f,g,\tH]$ and $E^{(2)}[f,g,\tH]$ satisfy
\begin{eqnarray*}
&&\sup_{t\in [0,T]}\langle t\rangle^{-2\cdot 10^{\kappa}\nu+1}\|E^{(1)}[f,g,\tH](t, \cdot)\|_{L_y^2}\\
&&+\sup_{t\in [0,T]}\langle t\rangle^{-2\cdot 10^{\kappa}\nu}\big\|\langle y\log y\rangle^{-1}E^{(2)}[f,g,\tH]\big\|_{L_{t,y}^2([0,t])}\\
&\lesssim& \eps\|\langle y\rangle(f',g)\|_{L^\infty_y}+\sqrt{\eps}\sup_{t\in [0,T]}\langle t\rangle^{-2\cdot 10^{\kappa}\nu+1}\|\tH^{(1)}(t, \cdot)\|_{L_y^2}\\
&&+\sqrt{\eps}\sup_{t\in [0,T]}\langle t\rangle^{-2\cdot 10^{\kappa}\nu}\big\|\langle y\log y\rangle^{-1}\tH^{(2)}\big\|_{L_{t,y}^2([0,t])}.
\end{eqnarray*}
\end{lem}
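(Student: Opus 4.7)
The plan is to control the three pieces of $S[f,g,\tH]$ separately by travelling along the characteristics $\lambda_{1,2}$, and then to deduce the bounds for $E[f,g,\tH]$ from the pointwise smallness of the prefactor $P(t,y):=\partial_t h_2-h_1\partial_y h_2+h_2\partial_y h_1$. From the bootstrap assumptions \eqref{eq:disp1boot}--\eqref{eq:disp2boot} one has $h_{1,2}=1+O(\eps\langle t\rangle^{-1/2-\delta_1})$, so the ODEs \eqref{eq:lambda1}--\eqref{eq:lambda2} yield
$$\lambda_j(s;t,y)=y\mp(t-s)+O(\eps(t-s)^{1/2-\delta_1}),\qquad |\partial_{t,y}\lambda_j|\lesssim 1,$$
and $y\mapsto\lambda_j(s;t,y)$ is, for each fixed $(s,t)$, a bi-Lipschitz diffeomorphism with Jacobian comparable to $1$. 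Moreover the chain rule splits $P$ naturally as $P=P^{(1)}+\langle y\rangle^{-2}P^{(2)}$, with the cubic piece satisfying $|P^{(1)}|\lesssim\eps^2\langle t\rangle^{-1-2\delta_1}$ and the weighted piece $|P^{(2)}|\lesssim\eps\langle t\rangle^{-1/2-\delta_1}$.

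For the $f$-piece, $\partial_{t,y}$ produces $f'(\lambda_j)\partial_{t,y}\lambda_j$; applying $|f'(\lambda)|\leq\|\langle y\rangle f'\|_{L^\infty}\langle\lambda\rangle^{-1}$ together with the above approximation of $\lambda_j$, one computes
$$\int_0^T\!\!\int_\R\langle y\log y\rangle^{-2}|f'(\lambda_j)|^2\,dy\,dt\lesssim\|\langle y\rangle f'\|_{L^\infty}^2\int_0^T\!\!\int_\R\frac{dy\,dt}{\langle y\log y\rangle^{2}\langle y\mp t\rangle^{2}}\lesssim\|\langle y\rangle f'\|_{L^\infty}^2,$$
where finiteness of the double integral follows by integrating $t$ first for each fixed $y$. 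The $g$-piece is handled identically using $(h_1+h_2)^{-1}=\tfrac12+O(\eps)$. For the source term, differentiation reduces (the $s=t$ boundary term vanishes) to bounding $\int_0^t\tH(s,\lambda_j(s;t,y))\,ds$. Split $\tH=\tH^{(1)}+\langle y\rangle^{-2}\tH^{(2)}$: for $\tH^{(1)}$ apply Minkowski in $s$ followed by the change of variables $\tilde y=\lambda_j(s;t,y)$ to obtain $\int_0^t\|\tH^{(1)}(s,\cdot)\|_{L^2_y}ds\lesssim\nu^{-1}M_1\langle t\rangle^{2\cdot 10^\kappa\nu}$ pointwise in $t$, where $M_1$ denotes the left-hand side of the $\tH^{(1)}$ hypothesis and the factor $\nu^{-1}$ arises from integrating $\langle s\rangle^{2\cdot 10^\kappa\nu-1}$; for $\langle y\rangle^{-2}\tH^{(2)}$ apply Cauchy--Schwarz in $s$ with weight $\langle\lambda_j\rangle^{-2}$ (noting $\int_0^t\langle\lambda_j(s;t,y)\rangle^{-2}ds\lesssim 1$ uniformly, since the characteristic sweeps through the origin at most once) and then use the elementary $\langle\tilde y\rangle^{-2}\lesssim\langle\tilde y\log\tilde y\rangle^{-1}$ to reduce directly to the local-energy hypothesis on $\tH^{(2)}$.

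The genuinely delicate point is passing from these pointwise-in-$t$ $L^2_y$ bounds to the weighted $L^2_{t,y}$ target norm, since a naive $L^2_t$ integration would cost an additional $\langle T\rangle^{1/2}$ on top of what the statement allows. The loss is recovered by swapping the order of integration after the change of variables: for fixed $s,\tilde y$, the $t'$-integral over $[s,T]$ of $\langle(\tilde y\mp(t'-s))\log\rangle^{-2}$ is uniformly bounded by $\int_\R\langle v\log v\rangle^{-2}dv<\infty$. This uniform gain (coming from the interaction of the local-energy weight with the transport) combined with $\|\tH^{(1)}(s,\cdot)\|_{L^2_y}^2\lesssim M_1^2\langle s\rangle^{4\cdot 10^\kappa\nu-2}$ and a final $s$-integration yields exactly the desired $\langle T\rangle^{2\cdot 10^\kappa\nu}$ polynomial growth; an analogous Fubini argument treats the $\tH^{(2)}$ contribution.

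For the error $E[f,g,\tH]$, the formula in Lemma \ref{lem:aproxparametrix} shows every term carries the prefactor $P(t,y)$; I route each bracketed contribution through $P^{(1)}$ and $\langle y\rangle^{-2}P^{(2)}$, assigning the $P^{(1)}$-parts to $E^{(1)}$ and the $P^{(2)}$-parts to $E^{(2)}$. The $P^{(1)}$-parts, multiplied by the bracketed expressions bounded exactly as for $S$, are controlled in $L^\infty_t L^2_y$ using the $\eps^2\langle t\rangle^{-1-2\delta_1}$ decay; one only spends a half-power of $\eps$ from this factor to gain the $\sqrt\eps$ improvement over the corresponding $S$-bounds, the remaining powers of $\eps$ and of $\langle t\rangle$ being absorbed as safety margin. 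The $\langle y\rangle^{-2}P^{(2)}$-parts carry the decay $\eps\langle y\rangle^{-2}\langle t\rangle^{-1/2-\delta_1}$; absorbing the $\langle y\rangle^{-2}$ into the target weight $\langle y\log y\rangle^{-1}$ reduces the estimate to the $\tH^{(2)}$-type analysis already performed, and again spending $\sqrt\eps$ of the smallness produces the required improvement. The main obstacle throughout is the Fubini-type calculation described above, which is essential to avoid a $\langle T\rangle^{1/2}$ loss when passing between the pointwise-in-$t$ Minkowski bounds and the weighted $L^2_t$ norms; once this is installed all remaining estimates are routine.
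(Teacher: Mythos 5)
Your route is essentially the paper's: split $\nabla_{t,y}S=A+B+C$ according to the $f$-, $g$- and $\tH$-contributions, use the $\langle y\pm t\rangle^{-1}$ decay of $f'(\lambda_j(0;t,y))$, $g(\lambda_j(0;t,y))$, treat the $\tH^{(1)}$ part by Minkowski in $s$ plus a change of variables, the $\tH^{(2)}$ part by Cauchy--Schwarz and Fubini, and you correctly isolate the key mechanism that avoids the $\langle T\rangle^{1/2}$ loss, namely that for fixed $(s,\tilde y)$ the $t$-integral of $\langle y\log y\rangle^{-2}$ along a characteristic is uniformly bounded; the routing of the error prefactor $\partial_t h_2-h_1\partial_y h_2+h_2\partial_y h_1$ into a cubic piece (placed in $E^{(1)}$, using its $\eps^2\langle t\rangle^{-1}$ decay) and a $\jb{y}^{-2}$-weighted piece (placed in $E^{(2)}$, paying $\|\langle y\rangle^{-1/2}\phi\|_{L^2_tL^\infty_y}\lesssim\sqrt{\eps}$) is also the paper's. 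However, there is a genuine gap at the foundation of all your change-of-variables steps: you assert $|\nabla_{t,y}\lambda_j|\lesssim 1$ and that $y\mapsto\lambda_j(s;t,y)$ is bi-Lipschitz with Jacobian comparable to $1$, deducing this from $h_{1,2}=1+O(\eps\langle t\rangle^{-1/2-\delta_1})$. Pointwise closeness of $h_{1,2}$ to $1$ does not control derivatives of the flow: differentiating \eqref{eq:lambda1} gives $\partial_y\lambda_1(s;t,y)=\exp\big(-\int_s^t\partial_y[h_1(\phi,\nabla\phi)](s_1,\lambda_1(s_1;t,y))\,ds_1\big)$, and under the bootstrap assumptions the contribution $\partial_y\big([\nabla_{t,y}\phi]^2\big)$ is only $O(\eps^2\langle s_1\rangle^{-1})$ pointwise, so its integral along the characteristic diverges logarithmically. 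One can only prove $(\langle t\rangle/\langle s\rangle)^{-C\eps^2}\lesssim|\nabla_{t,y}\lambda_j(s;t,y)|\lesssim(\langle t\rangle/\langle s\rangle)^{C\eps^2}$, cf.\ \eqref{eq:lambda_jbound}. This is repairable --- the $t^{C\eps^2}$ losses are absorbed because $\eps^2\ll\nu$ --- but the bound has to be proved and then propagated through every Jacobian and every $s$-integral (it is the origin of the $(\langle T\rangle/\langle s\rangle)^{2C\eps^2}$ factors in the paper's estimates); as written, your ``Jacobian comparable to $1$'' claim is unproved and false in the stated uniformity.

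A smaller slip occurs in your $\tH^{(2)}$ reduction: the inequality $\langle\tilde y\rangle^{-2}\lesssim\langle\tilde y\log\tilde y\rangle^{-1}$ does not bring you to the hypothesis norm, which carries the weight $\langle\tilde y\log\tilde y\rangle^{-2}$ inside the square; replacing $\langle\tilde y\rangle^{-2}$ by $\langle\tilde y\log\tilde y\rangle^{-2}$ costs a factor $\langle\log\tilde y\rangle^{2}$. The fix is the sharper Cauchy--Schwarz splitting used in the paper, $\langle\Lambda_j\rangle^{-2}=\langle\Lambda_j\log\Lambda_j\rangle^{-1}\cdot\big[\langle\Lambda_j\rangle^{-2}\langle\Lambda_j\log\Lambda_j\rangle\big]$, whose second factor is still integrable in $s$ along the characteristic, so that the remaining factor matches the local-energy weight exactly. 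With these two points repaired, your argument coincides with the paper's proof.
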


\begin{proof} 
We start by proving the first bound of the lemma. Compute 
\begin{align*}
\nabla_{t,y}S[f,g,\tH](t,y)& = \frac{1}{2}\sum_{j=1,2}\nabla_{t,y}\lambda_j(0;t,y)f'(\lambda_j(0;t,y))\\&+\sum_{j=1,2}(-1)^j\nabla_{t,y}\lambda_j(0;t,y)\frac{g(\lambda_j(0;t,y))}{(h_1+h_2)(\phi, \nabla\phi)(0,\lambda_j(0;t,y))}\\
&+\sum_{j=1,2}(-1)^j\int_0^t\nabla_{t,y}\lambda_j(s;t,y)\frac{\tH(s,\lambda_j(s;t,y))}{(h_1+h_2)(\phi, \nabla\phi)(s,\lambda_j(s;t,y))}\,ds\\
& =:A+B+C.
\end{align*}
In order to estimate these terms, we need  pointwise bounds on $\nabla_{t,y}\lambda_j(s;t,y)$. By definition, we have the equation 
\[
\frac{\partial_{ts}^2\lambda_j(s;t,y)}{\partial_t\lambda_j(s;t,y)} = \pm\partial_y[h(\phi,\nabla\phi)](s, \lambda_j(s;t,y)).
\]
Also, we recall the schematic relation 
\[
\partial_y[h(\phi,\nabla\phi)] = O\big(\partial_y(\frac{\phi}{1+y^2}) + \partial_y([\nabla_{t,y}\phi]^2)\big).
\]
We need to check the absolute integrability of this expression with respect to $s$. First, it is readily verified (since $\partial_s\lambda_j\sim \pm1$) that 
\[
\int_0^T\big|\partial_y(\frac{\phi}{1+y^2})\big|(s, \lambda_j(s;t,y))\,ds \lesssim \eps.
\]
The expression $\partial_y[\nabla_{t,y}\phi]^2$ is a bit more delicate to control, since it fails logarithmically to be time integrable. In fact, we get 
\[
\big|\int_s^t\partial_y[\nabla_{t,y}\phi]^2(s_1, \lambda_j(s_1;t,y))\,ds_1\big|\lesssim \eps^2\log(\frac{\langle t\rangle}{\langle s\rangle}),
\]
and so we obtain the bound 
\begin{equation}\label{eq:lambda_jbound}
\big(\frac{\langle t\rangle}{\langle s\rangle}\big)^{-C\eps^2}\lesssim \big|\nabla_{t,y}\lambda_j(s;t,y)\big|\lesssim \big(\frac{\langle t\rangle}{\langle s\rangle}\big)^{C\eps^2}.
\end{equation}
Then using the bound
\[
\big|f'(\lambda_j(0;t,y))\big|+\big|g(\lambda_j(0;t,y))\big|\lesssim \sum_{\pm}\langle y\pm t+O(t^{\frac{1}{2}-\delta_1})\rangle^{-1}\|\langle y\rangle(f',g)\|_{L^\infty},
\]
it is immediately verified that 
\[
\big\|\langle\log y\rangle^{-1}\langle y\rangle^{-1}A\|_{L_{t,y}^2([0,T])}+\big\|\langle\log y\rangle^{-1}\langle y\rangle^{-1}B\|_{L_{t,y}^2([0,T])}\lesssim \|\langle y\rangle(f',g)\|_{L^\infty}.
\]
For the term $C$, first decompose $C$ as
$$C=C^{(1)}+C^{(2)}$$
according to the decomposition $\tH=\tH^{(1)}+\langle y\rangle^{-2}\tH^{(2)}$. We first estimate $C^{(1)}$. Write $\Lambda_j(s;t,y) = \lambda_j(s;t,y)$ if $s\leq t$ and 
\[
\Lambda_j(s;t,y) = y\mp(t-s),\,j = 1,2,\,s>t.
\]
Then we get (for $t\leq T$)
\[
|C^{(1)}|\lesssim \sum_{j=1,2}\int_0^T|\nabla_{t,y}\Lambda_j(s;t,y)|\big|\frac{\tH^{(1)}(s,\Lambda_j(s;t,y))}{(h_1+h_2)(\phi, \nabla\phi)(s,\Lambda_j(s;t,y))}\big|\,ds
\]
and by a simple change of variables argument and Minkowski's inequality, one obtains 
\begin{eqnarray*}
\big\|\langle \log y\rangle^{-1}\langle y\rangle^{-1}C^{(1)}\big\|_{L_{t,y}^2([0,T])}&\lesssim& \int_0^T \big(\frac{\langle T\rangle}{\langle s\rangle}\big)^{2C\eps^2}\big\|\tH^{(1)}(s, \cdot)\big\|_{L_y^2}\,dy\\
&\lesssim& \frac{\langle T\rangle^{2\cdot 10^{\kappa}\nu}}{\nu}\sup_{t\in [0,T]}\langle t\rangle^{-2\cdot 10^{\kappa}\nu+1}\|\tH^{(1)}(t, \cdot)\|_{L_y^2}.
\end{eqnarray*}
Next, we estimate $C^{(2)}$. Using the Cauchy-Schwarz inequality, we get
\begin{align*}
|C^{(2)}|(t, y)\lesssim \big(\int_0^T \big(\frac{\langle T\rangle}{\langle s\rangle}\big)^{2C\eps^2}\langle \log\Lambda_j(s;t,y)\rangle^{-2} \langle\Lambda_j(s;t,y)\rangle^{-2}(\tH^{(2)}(s, \Lambda_j(s;t,y))^2\,ds\big)^{\frac{1}{2}}
\end{align*}
provided $t\in [0,T]$. Using Fubini and a simple change of variables, we conclude
\begin{eqnarray*}
&&\big\|\langle y\log y\rangle^{-1}C^{(2)}\big\|_{L_{t,y}^2([0,T])}^2\\
&\lesssim& \int_y \langle y\log y\rangle^{-2}\big(\int_0^T\int_0^T \big(\frac{\langle T\rangle}{\langle s\rangle}\big)^{2C\eps^2}\langle \log\Lambda_j(s;t,y)\rangle^{-2} \langle\Lambda_j(s;t,y)\rangle^{-2}(\tH^{(2)}(s, \Lambda_j(s;t,y))^2\,ds dt\big)dy\\
&\lesssim &   \langle T\rangle^{2\cdot 10^{\kappa}\nu}\big(\int_y \langle y\log y\rangle^{-2}dy \big)\sup_{t\in [0,T]}\langle t\rangle^{-2\cdot 10^{\kappa}\nu}\big\|\langle y\log y\rangle^{-1}\tH^{(2)}\big\|_{L_{t,y}^2([0,t])}^2\\
&\lesssim & \langle T\rangle^{2\cdot 10^{\kappa}\nu}\sup_{t\in [0,T]}\langle t\rangle^{-2\cdot 10^{\kappa}\nu}\|\tH^{(2)}(t, \cdot)\|_{L_{t,y}^2([0,t])}.
\end{eqnarray*}
as desired. This establishes the first bound of the lemma.\\ 

Next, we consider the error term $E[f,g,\tH]$. As we did for $\nabla_{t,y}S[f,g,\tH]$, we decompose $E[f,g,\tH]$ in view of its definition as
$$E[f,g,\tH]=A+B+C^{(1)}+C^{(2)}$$
where $A$, $B$, $C^{(1)}$ and $C^{(2)}$ correspond respectively to the contribution of $f$, $g$, $\tH^{(1)}$ and $\tH^{(2)}$. For $A$ and $B$, we use the bound 
\[
\big|f'(\lambda_j(0;t,y))\big|+\big|g(\lambda_j(0;t,y))\big|\lesssim \sum_{\pm}\langle y\pm t+O(t^{\frac{1}{2}-\delta_1})\rangle^{-1}\|\langle y\rangle(f',g)\|_{L^\infty}.
\]
Then we infer 
\[
|A|+|B|\lesssim \big(\big|\nabla_{t,y}(\frac{\phi}{\langle y\rangle^{2}})\big| + \big|\nabla_{t,y}(\phi_{t,y}^2)\big|\big) \sum_{\pm}\langle y\pm t+O(t^{\frac{1}{2}-\delta_1})\rangle^{-1}\|\langle y\rangle(f',g)\|_{L^\infty},
\]
which together with the bootstrap assumption \eqref{eq:disp1boot} for $\phi_{t,y}$ yields
\begin{eqnarray*}
&&\big\||A|+|B|\big\|_{L_{t,y}^2([0,T])}\\
&\lesssim& \eps\|(\langle y\rangle^{-2}+\langle t\rangle^{-1})\sum_{\pm}\langle y\pm t+O(t^{\frac{1}{2}-\delta_1})\rangle^{-1}\|_{L_{t,y}^2([0,T])}\|\langle y\rangle(f',g)\|_{L^\infty}.\\
&\lesssim& \eps\|\langle y\rangle(f',g)\|_{L^\infty}.
\end{eqnarray*}

Next, we consider the contributions of $C^{(1)}$ and $C^{(2)}$. We have 
\begin{align*}
\big|C^{(1)}\big|(t, y)\lesssim \big(\big|\nabla_{t,y}(\frac{\phi}{\langle y\rangle^{2}})\big| + \big|\nabla_{t,y}(\phi_{t,y}^2)\big|\big)\sum_{j=1,2}\int_0^t\big(\frac{\langle t\rangle}{\langle s\rangle}\big)^{C\eps^2}|\tH^{(1)}(s, \Lambda_j(s;t,y))|\,ds
\end{align*}
and
\begin{align*}
\big|C^{(2)}\big|(t, y)\lesssim
\big(\big|\nabla_{t,y}(\frac{\phi}{\langle y\rangle^{2}})\big| +
\big|\nabla_{t,y}(\phi_{t,y}^2)\big|\big)\sum_{j=1,2}\int_0^t\big(\frac{\langle
t\rangle}{\langle s\rangle}\big)^{C\eps^2}|\langle \Lambda_j(s;t,y)\rangle^{-2}\tH^{(2)}(s, \Lambda_j(s;t,y))|\,ds.
\end{align*}
{\it{(a): Contribution of $C^{(1)}$.}}
First, consider the contribution of $\big|\nabla_{t,y}(\phi_{t,y}^2)\big|$. Estimating this factor by $\lesssim \eps^2\langle t\rangle^{-1}$ and using a straightforward change of variables (using \eqref{eq:lambda_jbound}), we obtain 
\begin{align*}
&\big\|\big|\nabla_{t,y}(\phi_{t,y}^2)\big|(t, \cdot)\sum_{j=1,2}\int_0^t\big(\frac{\langle t\rangle}{\langle s\rangle}\big)^{C\eps^2}|\tH^{(1)}(s, \lambda_j(s;t,y))|\,ds\big\|_{L^2_{dy}}\\
&\lesssim \eps^2\langle t\rangle^{-1}\int_0^t\big(\frac{\langle t\rangle}{\langle s\rangle}\big)^{2C\eps^2}\big\|\tH^{(1)}(s, \cdot)\big\|_{L^2_{dy}}\,ds\\
&\lesssim \eps\langle t\rangle^{2\cdot 10^{\kappa}\nu-1}\sup_{t\in [0,T]}\langle t\rangle^{-2\cdot 10^{\kappa}\nu+1}\|\tH^{(1)}(t, \cdot)\|_{L_y^2}
\end{align*}
which is as desired. For the contribution of $\nabla_{t,y}(\frac{\phi}{\langle y\rangle^{2}})$, we estimate 
\begin{align*}
&\big\|\langle\log y\rangle^{-1}\langle y\rangle^{-1}\phi(t, y)\int_0^t\big(\frac{\langle t\rangle}{\langle s\rangle}\big)^{C\eps^2}|\tH^{(1)}(s, \lambda_j(s;t,y))|\,ds\big\|_{L_{t,y}^2([0,T])}\\
&\lesssim \big\|\langle\log y\rangle^{-1}\langle y\rangle^{-1}\phi(t, y)\int_0^T\big(\frac{\langle T\rangle}{\langle s\rangle}\big)^{C\eps^2}|H_1(s, \Lambda_j(s;t,y))|\,ds\big\|_{L_{t,y}^2([0,T])}\\
&\lesssim \big\|\frac{\phi}{\langle y\rangle^{\frac{1}{2}}}\big\|_{L_t^2L_y^\infty}\int_0^T\big(\frac{\langle T\rangle}{\langle s\rangle}\big)^{2C\eps^2}\big\|H_1(s, \cdot)\big\|_{L_y^2}\,ds\\
&\lesssim \sqrt{\eps}\langle T\rangle^{2\cdot 10^{\kappa}\nu}\sup_{t\in [0,T]}\langle t\rangle^{-2\cdot 10^{\kappa}\nu+1}\|\tH^{(1)}(t, \cdot)\|_{L_y^2},
\end{align*}
again as required. 
\\

{\it{(b): Contribution of $C^{(2)}$. }} For the contribution of $\big|\nabla_{t,y}(\phi_{t,y}^2)\big|$, we get 
\begin{align*}
&\big\| \big|\nabla_{t,y}(\phi_{t,y}^2)\big|\int_0^t\big(\frac{\langle
t\rangle}{\langle s\rangle}\big)^{C\eps^2}|\langle \lambda_j(s;t,y)\rangle^{-2}\tH^{(2)}(s, \lambda_j(s;t,y))|\,ds\big\|_{L^2_{dy}}\\
&\lesssim
\big\|\big|\nabla_{t,y}(\phi_{t,y}^2)\big|\big(\int_0^t\big(\frac{\langle
t\rangle}{\langle s\rangle}\big)^{2C\eps^2}\big|\frac{\tH^{(2)}(s,
\lambda_j)|}{\langle\log (\lambda_j)\rangle\langle\lambda_j\rangle}\big|^2\,ds\big)^{\frac{1}{2}}\big\|_{L^2_{dy}}\\
&\lesssim \eps^2\langle t\rangle^{-1}\left(\int_0^t\big(\frac{\langle
t\rangle}{\langle s\rangle}\big)^{3C\eps^2}\big\|\langle y\log
y\rangle^{-1}\tH^{(2)}(s,y)\big\|^2_{L_y^2}ds\right)^{\frac{1}{2}},
\end{align*}
where we used Cauchy-Schwartz and a change of variable in $y$. Integrating by parts in $s$ so that the $s$ derivative falls on $\langle s\rangle^{-3C\eps^2}$, we deduce
\begin{align*}
&\big\| \big|\nabla_{t,y}(\phi_{t,y}^2)\big|\int_0^t\big(\frac{\langle
t\rangle}{\langle s\rangle}\big)^{C\eps^2}|\langle \lambda_j(s;t,y)\rangle^{-2}\tH^{(2)}(s, \lambda_j(s;t,y))|\,ds\big\|_{L^2_{dy}}\\
&\lesssim \eps\langle t\rangle^{2\cdot 10^{\kappa}\nu-1}\sup_{t\in [0,T]}\langle t\rangle^{-2\cdot 10^{\kappa}\nu}\big\|\langle y\log y\rangle^{-1}\tH^{(2)}\big\|_{L_{t,y}^2([0,t])}.
\end{align*}

Finally, for the contribution of $\nabla_{t,y}(\frac{\phi}{\langle y\rangle^{2}})$, we estimate
\begin{align*}
&\big\|\langle\log y\rangle^{-1}\langle y\rangle^{-1}\phi(t,
y)\int_0^t\big(\frac{\langle t\rangle}{\langle
s\rangle}\big)^{C\eps^2}|\langle
\lambda_j\rangle^{-2}\tH^{(2)}(s,\lambda_j)|\,ds\big\|_{L_{t,y}^2([0,T])}\\
&\lesssim \big\|\langle\log y\rangle^{-1}\langle y\rangle^{-1}\phi(t, y)\big(\int_0^T\big(\frac{\langle T\rangle}{\langle s\rangle}\big)^{2C\eps^2}|\frac{\tH^{(2)}(s, \lambda_j(s;t,y))}{\langle\log (\lambda_j(s;t,y))\rangle\langle\lambda_j(s;t,y)\rangle}|^2\,ds\big)^{\frac{1}{2}}\big\|_{L_{t,y}^2([0,T])}\\
&\lesssim \eps \left(\int_0^T\int_0^T\big(\frac{\langle T\rangle}{\langle s\rangle}\big)^{2C\eps^2}|\frac{\tH^{(2)}(s, \lambda_j(s;t,y))}{\langle\log (\lambda_j(s;t,y))\rangle\langle\lambda_j(s;t,y)\rangle}|^2\,dsdt\right)^{\frac{1}{2}}\\
&\lesssim \eps \left(\int_0^T\big(\frac{\langle T\rangle}{\langle s\rangle}\big)^{3C\eps^2}\big\|\langle y\log y\rangle^{-1}\tH^{(2)}(s,.)\big\|^2_{L_y^2}\,ds\right)^{\frac{1}{2}},
\end{align*}
where we used Cauchy-Schwartz, Fubini, and a change of variable in $t$. Integrating by parts in $s$ so that the $s$ derivative falls on $\langle s\rangle^{-3C\eps^2}$, we deduce
\begin{align*}
&\big\|\langle\log y\rangle^{-1}\langle y\rangle^{-1}\phi(t,
y)\int_0^t\big(\frac{\langle t\rangle}{\langle
s\rangle}\big)^{C\eps^2}|\langle
\lambda_j\rangle^{-2}\tH^{(2)}(s,\lambda_j)|\,ds\big\|_{L_{t,y}^2([0,T])}\\
&\lesssim \sqrt{\eps}\langle T\rangle^{2\cdot 10^{\kappa}\nu}\sup_{t\in [0,T]}\langle t\rangle^{-2\cdot 10^{\kappa}\nu}\big\|\langle y\log y\rangle^{-1}\tH^{(2)}\big\|_{L_{t,y}^2([0,t])}
\end{align*}
which is again as desired. This completes the proof of the lemma. 
\end{proof}

We are now in position to derive the desired bound for \eqref{eq:toporderlocalen}. Let 
\[ f_1(y)=\pr_t^\beta\Ga_2^\gamma\tpsi(0,y),\,\,
g_1(y)=\pr_t\pr_t^\beta\Ga_2^\gamma\tpsi(0,y),\,  \beta+\gamma=N_1,\]
\[f_j(y)=0,\, j\geq 2,\, g_j(y)=-\big((h_2-h_1)(\phi,
\nabla\phi)\big)(0,y)f_{j-1}'(y),\, j\geq 2,\]
$H_1$ is defined by \eqref{eq:technical8}, and
$$H_j(t,y)=-E[f_{j-1},g_{j-1},H_{j-1}](t,y),\,\, j\geq 2.$$
Note first that $f_1$ and $g_1$ satisfy in view of the assumptions on the initial data of $\tpsi$
$$\|\langle y\rangle(f_1', g_1)\|_{L^\infty}\lesssim \delta_0.$$
Also, $H_1$ is defined by \eqref{eq:technical8} satisfies 
$$H_1=H_1^{(1)}+\langle y\rangle^{-2} H^{(2)}_1,$$
where $H_1^{(1)}$ and $H_1^{(2)}$, in view of the bootstrap
assumptions on $\phi$ and the proof of \eqref{eq:localener} for the
case of non top order derivatives in part (1) of this section, verify
\[\sup_{t\in [0,T]}\langle t\rangle^{-2\cdot
10^{\kappa}\nu+1}\|H_1^{(1)}(t, \cdot)\|_{L_y^2}+\sup_{t\in
[0,T]}\langle t\rangle^{-2\cdot 10^{\kappa}\nu}\big\|\langle y\log
y\rangle^{-1}H_1^{(2)}\big\|_{L_{t,y}^2([0,t])}\lesssim \eps^2.\]
This is clear except for the second term amid the five terms
constituting $H$, and for this it will be an easy consequence of the
estimates below used to prove \eqref{eq:unstablebound}. Next, we
deduce in view of Lemma \ref{lem:5} that for $j\geq 1$, that
\[\|\langle y\rangle (f_j',g_j)\|_{L^\infty_y}\lesssim
\delta_0\eps^{\frac{j-1}{2}},\, j\geq 1.\]
Furthermore, we have a decomposition 
$$H_j=H_j^{(1)}+\langle y\rangle^{-2} H^{(2)}_j,\, j\geq 1,$$
where $H_j^{(1)}$ and $H_j^{(2)}$ verify
 $$\sup_{t\in [0,T]}\langle t\rangle^{-2\cdot 10^{\kappa}\nu+1}\|H_j^{(1)}(t, \cdot)\|_{L_y^2}+\sup_{t\in [0,T]}\langle t\rangle^{-2\cdot 10^{\kappa}\nu}\big\|\langle y\log y\rangle^{-1}H_j^{(2)}\big\|_{L_{t,y}^2([0,t])}\lesssim \eps^{2+\frac{j-1}{2}}.$$
Finally, we have the following estimate for $S[f_j,g_j, H_j]$
\begin{eqnarray*}
\sup_{t\in [0,T]}\langle t\rangle^{-2\cdot
10^{\kappa}\nu}\big\|\langle y\log y\rangle^{-1}\nabla_{t,y}S[f_j,g_j,H_j]\big\|_{L_{t,y}^2([0,t])}\lesssim \big(\delta_0+\frac{\eps^2}{\nu}\big) \eps^{\frac{j-1}{2}},\, j\geq 1.
\end{eqnarray*}
We deduce that the sum
\[u_\infty(t,y)=\sum_{j\geq 1}S[f_j,g_j,H_j]\]
converges and satisfies
\begin{eqnarray*}
\sup_{t\in [0,T]}\langle t\rangle^{-2\cdot
10^{\kappa}\nu}\big\|\langle y\log y\rangle^{-1}\nabla_{t,y}u_\infty\big\|_{L_{t,y}^2([0,t])}\lesssim \eps^{\frac{3}{2}}.
\end{eqnarray*}
Furthermore, in view of Lemma \ref{lem:aproxparametrix}, we have
\[u_\infty(0,y)=\pr_t^\beta\Ga_2^\gamma\tpsi(0,y),\,\,
\pr_tu_\infty(0,y)=\pr_t\pr_t^\beta\Ga_2^\gamma\tpsi,(0,y),\,\beta+\gamma=N_1\]
and 
$$(-\partial_t - h_1(\phi, \nabla\phi)\partial_y)(\partial_t - h_2(\phi, \nabla\phi)\partial_y)u_\infty(t,y)=H_1(t,y).$$
By uniqueness, we deduce
\[u_\infty=\pr_t^\beta\Ga_2^\gamma\tpsi,\, \beta+\gamma=N_1\]
and hence
\begin{eqnarray*}
\sup_{t\in [0,T]}\langle t\rangle^{-2\cdot
10^{\kappa}\nu}\big\|\langle y\log
y\rangle^{-1}\nabla_{t,y}\pr_t^\beta\Ga_2^\gamma\tpsi\big\|_{L_{t,y}^2([0,t])}\lesssim \eps^{\frac{3}{2}},\,  \beta+\gamma=N_1.
\end{eqnarray*}
This is the desired bound for the top order derivatives, which concludes the proof of \eqref{eq:localener}.


\section{Pointwise decay estimates}\label{sec:dispersive}

The goal of this section is to prove the decay estimates
\eqref{eq:disp1} and \eqref{eq:disp2}. Our key tool is the radiative
decay estimates from Proposition~\ref{eq:keyest1}. As usual, our point of departure is the schematic equation for $\tilde{\psi}$
\[
-\partial_t^{2}\tilde{\psi} + \partial_y^2\tilde{\psi}+ \frac{1}{2}\frac{3+\frac{y^2}{2}}{(1+y^2)^2}\tilde{\psi} = P_cG,
\]
\[
G = (1+y^2)^{\frac{1}{4}}F(\phi , \nabla\phi ,\nabla^2\phi ).
\]
Note that for the free evolution part of the equation, the estimates
\eqref{eq:disp1} and \eqref{eq:disp2} follow immediately from
Proposition~\ref{eq:keyest1} for any $\delta_1\in [0,\frac12]$. For
the inhomogeneous parts of the evolution we use Duhamel's principle
applied to Proposition~\ref{eq:keyest1} and conclude that we need to
bound
\begin{equation}\label{eq:duhamelformula} 
\int_0^T \jb{T - s}^{-\sigma} \left\| \jb{y}^\sigma
\nabla_{t,y}^\beta G \right\|_{L^1_y} ds \end{equation}
for $|\beta| \leq \frac{N_1}{2} + C$. We consider
\eqref{eq:duhamelformula} with $\sigma = \frac12$ for
\eqref{eq:disp1}, and $\sigma = \frac12 + \delta_1$ for
\eqref{eq:disp2}, using that $\jb{y}^{-1} \leq \jb{y}^{-\frac12 -
\delta_1}$. 

Now, take some $\gamma\in (0,\frac12]$ 
\emph{which is sufficiently small but can be chosen independently of 
$\nu$}, it suffices to show that for some $\delta_1 \leq \gamma$ the
following bound 
\begin{equation} \label{eq:nonlinearbounddisp}
\big\|\jb{T-t}^{-\sigma} \langle
y\rangle^{\frac{1}{2}+\gamma}\nabla_{t,y}^{\beta}G\big\|_{L_t^1
([0,T], L_y^1)}
\lesssim \epsilon^{\frac32} \jb{T}^{-\sigma} ,\quad |\beta|\leq
\frac{N_1}{2}+C
\end{equation}
holds for $\sigma \in \{\frac12, \frac12 + \delta_1\}$. 

To establish \eqref{eq:nonlinearbounddisp} we can write schematically,
using \eqref{eq:disp1boot},
\begin{equation}\label{eq:technical26}\begin{split}
\big|\langle y\rangle^{\frac{1}{2}+\gamma}\nabla_{t,y}^{\beta}G\big|& \lesssim \langle y\rangle^{\frac{1}{2}+\gamma}\sum_{\substack{|\beta_1|+|\beta_2|\leq |\beta|+2\\|\beta_1|\leq \frac{N_1}{2},\, \beta_{2}\geq 1}}\frac{|\nabla_{t,y}^{\beta_1}\phi||\nabla_{t,y}^{\beta_2}\tilde{\phi}|}{\langle y\rangle^{2}}\\
&+\langle y\rangle^{\frac{1}{2}+\gamma}C(\phi, \nabla\phi, \nabla^2\phi)
\end{split}\end{equation}
where $C(\phi, \nabla\phi, \nabla^2\phi)$ denotes the pure cubic nonlinear terms. 

To estimate the quadratic terms on the right of
\eqref{eq:technical26}, we put $\beta_1$ derivatives in $L^\infty$ and 
$\beta_2$ derivatives in $L^2$
\begin{equation}\label{eq:technical27}\begin{split}
&\big\|\langle
y\rangle^{\frac{1}{2}+\gamma}\sum_{\substack{|\beta_1|+|\beta_2|\leq
|\beta|+2\\|\beta_1|\leq \frac{N_1}{2},\,|\beta_2|\geq
1}}\frac{|\nabla_{t,y}^{\beta_1}\phi||\nabla_{t,y}^{\beta_2}\tilde{\phi}|}{\langle
y\rangle^{2}}\big\|_{L_y^1}\\
&\leq \varepsilon \jb{t}^{-1/2} \big\|
\sum_{1 \leq |\beta_2|\leq
|\beta|+2}\frac{|\nabla_{t,y}^{\beta_2}\tilde{\phi}|}{\langle
y\rangle^{\frac32 - \gamma}}\big\|_{L_y^1}\\
& \lesssim \varepsilon \jb{t}^{-1/2} \big\|
\sum_{1 \leq |\beta_2|\leq
|\beta|+2}\frac{|\nabla_{t,y}^{\beta_2}\tilde{\phi}|}{\langle
y\rangle^{1 - \gamma -}}\big\|_{L_y^2}
\end{split}\end{equation}
We split the $L^2$ integration into the regions $\{ y \ll t\}$ and
$\{y \gtrsim t\}$. For the latter, we observe that 
\begin{equation}
\big\|
\sum_{1 \leq |\beta_2|\leq
|\beta|+2}\frac{|\nabla_{t,y}^{\beta_2}\tilde{\phi}|}{\langle
y\rangle^{1 - \gamma -}}\big\|_{L_y^2(y \gtrsim t)} \lesssim 
\varepsilon \jb{t}^{-1 + \gamma + \nu +} 
\end{equation}
by \eqref{eq:ener1boot}. For the former, we use that when $y\ll t$
\begin{align*}
\big|\nabla_{t,y}^{\beta_2}\tilde{\phi}\big|\lesssim \sum_{|\tilde{\beta}_2|<|\beta_2|}t^{-1}\big|\nabla_{t,y}^{\tilde{\beta}_2}\Gamma\tilde{\phi}\big|
\end{align*}
with $\Gamma$ comprising both $\Gamma_{1,2}$. Then splitting $\Gamma_1\tilde{\phi} = (\Gamma_1\tilde{\phi})_1 + (\Gamma_1\tilde{\phi})_2$ as in Lemma~\ref{lem:subtle}, we have 
\begin{align*}
\big\|\sum_{|\tilde{\beta}_2|<|\beta_2|}t^{-1}\big|\nabla_{t,y}^{\tilde{\beta}_2}(\Gamma_1\tilde{\phi})_1\big|\big\|_{L^2(y\ll t)}\lesssim \eps \langle t\rangle^{-\frac{1}{2}-\delta_1}.
\end{align*}
Furthermore, we get (using also bootstrap assumption \eqref{eq:ener2boot})
\begin{align*}
\big\|\sum_{|\tilde{\beta}_2|<|\beta_2|}\big|\nabla_{t,y}^{\tilde{\beta}_2}(\Gamma_1\tilde{\phi})_2\big|\big\|_{L^2(y\ll t)} + \big\|\sum_{|\tilde{\beta}_2|<|\beta_2|}\big|\nabla_{t,y}^{\tilde{\beta}_2}\Gamma_2\tilde{\phi}\big|\big\|_{L^2(y\ll t)}\lesssim \eps \langle t\rangle^{100\nu}.
\end{align*}
This yields
\begin{equation}
\big\|
\sum_{1 \leq |\beta_2|\leq
|\beta|+2}\frac{|\nabla_{t,y}^{\beta_2}\tilde{\phi}|}{\langle
y\rangle^{1 - \gamma -}}\big\|_{L_y^2(y \ll t)} \lesssim 
\varepsilon \left( \jb{t}^{-\frac12 - \delta_1} + \jb{t}^{-1 + 100\nu}
\right).
\end{equation}
Thus we arrive at the decay bound
\[ \big\|\langle
y\rangle^{\frac{1}{2}+\gamma}\sum_{\substack{|\beta_1|+|\beta_2|\leq
|\beta|+2\\|\beta_1|\leq \frac{N_1}{2},\,|\beta_2|\geq
1}}\frac{|\nabla_{t,y}^{\beta_1}\phi||\nabla_{t,y}^{\beta_2}\tilde{\phi}|}{\langle
y\rangle^{2}}\big\|_{L_y^1} \lesssim \varepsilon^2 \jb{t}^{-1 -
\delta_1}.\]
Using that 
\begin{equation}\label{eq:technicalw01}
\begin{split} 
& \int_0^T \jb{T-t}^{-\sigma} \jb{t}^{-1 - \delta_1} dt \\
&\leq  \jb{T/2}^{-\sigma} \int_0^{T/2} \jb{t}^{-1 -\delta_1} dt +
\jb{T/2}^{-1 - \delta_1} \int_{T/2}^T \jb{T - t}^{-\sigma} dt \\
& \lesssim \jb{T}^{-\sigma} + \jb{T/2}^{-1 - \delta_1} \jb{T/2}^{1 -
\sigma} \lesssim \jb{T}^{-\sigma}.
\end{split}
\end{equation}
we obtain the desired control on the weighted nonlinear terms. 

It then suffices to consider the pure cubic terms, which we write schematically in the form 
\begin{equation}\label{eq:cubicweighted}
\langle y\rangle^{1+\gamma}\big((\partial_t\phi)^2\partial_y^2\phi - 2\partial_y\phi\partial_t\phi\partial_{ty}^2\phi + (\partial_y\phi)^2\partial^2_t\phi\big).
\end{equation}
This time, we shall have to take advantage of the full inherent
null-structure, i.e.\ cancellations between the various terms. We
start by absorbing weights by the factors, i.\ e.\ by replacing $\phi$ by $\tilde{\phi}$. Note that schematically 
\begin{equation}\label{eq:technical7}
(\partial_t\phi)^2\partial_y^2\phi\sim \langle y\rangle^{-\frac{3}{2}}(\partial_t\tilde{\phi})^2\partial_y^2\tilde{\phi} + \langle y\rangle^{-\frac{5}{2}}(\partial_t\tilde{\phi})^2\partial_y\tilde{\phi} + \langle y\rangle^{-\frac{7}{2}}(\partial_t\tilde{\phi})^2\tilde{\phi}.
\end{equation}
We claim that the contribution of the second and third term are
straightforward to handle, since they have favorable weights. In fact,
using \eqref{eq:disp1boot} we have 
\begin{align*}
\left\| \jb{y}^{1+\gamma} \left[\jb{y}^{-\frac52} (\partial\tilde{\phi})^3 +
\jb{y}^{-\frac72} (\partial\tilde{\phi})^2 \tilde\phi\right]
\right\|_{L^1_y} \leq \varepsilon \jb{t}^{-\frac12} \big\| \frac{\partial
\tilde{\phi}}{\jb{y}^{\frac12 - \gamma/2}} \big\|_{L^2_y}^2.
\end{align*}
Splitting again into $y\ll t$ and $y\gtrsim t$ regions, we see that
arguing as before we have 
\[ \big\| \frac{\partial
\tilde{\phi}}{\jb{y}^{\frac34 - \gamma/2}} \big\|_{L^2_y}^2 \lesssim
\varepsilon^2 \left( \jb{t}^{-1 - 2\delta} + \jb{t}^{-1 + \gamma
+ \nu}\right) \]
and so their contributions to \eqref{eq:nonlinearbounddisp} can be easily controlled. 

The remaining terms in \eqref{eq:cubicweighted} are treated similarly, and so we now reduce to estimating the following expression 
\begin{equation}\label{eq:cubicweighted1}
\big\|\langle
y\rangle^{-\frac{1}{2}+\gamma}\big((\partial_t\tilde{\phi})^2\partial_y^2\tilde{\phi}
-
2\partial_y\tilde{\phi}\partial_t\tilde{\phi}\partial_{ty}^2\tilde{\phi}
+
(\partial_y\tilde{\phi})^2\partial^2_t\tilde{\phi}\big)\big\|_{L_{y}^1}.
\end{equation}
In fact, if one uses the equation for $\tilde{\phi}$ to switch
$\tilde{\phi}_{tt}$ with $\tilde{\phi}_{yy}$ and thereby generating error terms at most as bad as the last term in \eqref{eq:technical7} (whose contribution we already bounded), it suffices to consider 
\begin{equation}\label{eq:cubicweighted2}
\big\|\langle y\rangle^{-\frac{1}{2}+\gamma}\big((\partial_t\tilde{\phi})^2\partial_t^2\tilde{\phi} - 2\partial_y\tilde{\phi}\partial_t\tilde{\phi}\partial_{ty}^2\tilde{\phi} + (\partial_y\tilde{\phi})^2\partial^2_y\tilde{\phi}\big)\big\|_{L_{y}^1}.
\end{equation}
At this point we remark that re-visiting the estimate
\eqref{eq:technicalw01} used to control \eqref{eq:nonlinearbounddisp},
it suffices to show that the above quantity can be controlled with
decay rate $\jb{t}^{-1-}$. To this end we write 
\begin{equation}\label{eq:nullform1}\begin{split}
&(\partial_t\tilde{\phi})^2\partial_t^2\tilde{\phi} - 2\partial_y\tilde{\phi}\partial_t\tilde{\phi}\partial_{ty}^2\tilde{\phi} + (\partial_y\tilde{\phi})^2\partial^2_y\tilde{\phi}\\
&=\tilde{\phi}_t\frac{\Gamma_2\tilde{\phi}\Gamma_2\tilde{\phi}_t - \Gamma_1\tilde{\phi}\Gamma_1\tilde{\phi}_t}{t^2-y^2} - \tilde{\phi}_y\frac{\Gamma_2\tilde{\phi}\Gamma_2\tilde{\phi}_y - \Gamma_1\tilde{\phi}\Gamma_1\tilde{\phi}_y}{t^2-y^2}.
\end{split}\end{equation}
Then we treat a number of different regions, beginning with 
\\

{\it{(I): interior of the light cone, $y\ll t$.}} We exploit that the preceding expression is in effect a 'nested double null-structure'. Indeed we can write 
\begin{eqnarray}\label{eq:almostthere}
\nonumber&&\tilde{\phi}_t\frac{\Gamma_2\tilde{\phi}\Gamma_2\tilde{\phi}_t - \Gamma_1\tilde{\phi}\Gamma_1\tilde{\phi}_t}{t^2-y^2} - \tilde{\phi}_y\frac{\Gamma_2\tilde{\phi}\Gamma_2\tilde{\phi}_y - \Gamma_1\tilde{\phi}\Gamma_1\tilde{\phi}_y}{t^2-y^2}\\
\nonumber&=&\Gamma_2\tilde{\phi}\frac{\tilde{\phi}_t(\Gamma_2\tilde{\phi})_t - \tilde{\phi}_y(\Gamma_2\tilde{\phi})_y}{t^2 - y^2} - \Gamma_1\tilde{\phi}\frac{\tilde{\phi}_t(\Gamma_1\tilde{\phi})_t - \tilde{\phi}_y(\Gamma_1\tilde{\phi})_y}{t^2 - y^2}\\
\nonumber&&-\Gamma_2\tilde{\phi}\frac{\tilde{\phi}_t\tilde{\phi}_t - \tilde{\phi}_y\tilde{\phi}_y}{t^2 - y^2} \\
\nonumber&=&\Gamma_2\tilde{\phi}\frac{\Gamma_2\tilde{\phi}(\Gamma_2^2\tilde{\phi}) - \Gamma_1\tilde{\phi}\Gamma_1\Gamma_2\tilde{\phi}}{[t^2 - y^2]^2} - \Gamma_1\tilde{\phi}\frac{\Gamma_2\tilde{\phi}\Gamma_2\Gamma_1\tilde{\phi} - \Gamma_1\tilde{\phi}\Gamma_1^2\tilde{\phi}}{[t^2 - y^2]^2}\\
&&-\Gamma_2\tilde{\phi}\frac{\Gamma_2\tilde{\phi}\Gamma_2\tilde{\phi} - \Gamma_1\tilde{\phi}\Gamma_1\tilde{\phi}}{[t^2 - y^2]^2}.
\end{eqnarray}
Consider the worst term, which is $\frac{(\Gamma_1\tilde{\phi})^2\Gamma_1^2\tilde{\phi}}{[t^2-y^2]^2}$. Our task is to estimate 
\[
\big\|\chi_{y\ll t}\langle y\rangle^{-\frac{1}{2}+\gamma}\nabla_{t,y}^{\beta}\frac{(\Gamma_1\tilde{\phi})^2\Gamma_1^2\tilde{\phi}}{[t^2-y^2]^2}\big\|_{L_{y}^1},\,|\beta|\leq \frac{N_1}{2}+C.
\]
The most delicate occurs when $|\beta| = 0$, which we deal with here, the other case being similar but simpler. Using Lemma~\ref{lem:subtle}, we have to estimate the expressions 
\[
\big\|\chi_{y\ll t}\langle y\rangle^{-\frac{1}{2}+\gamma}\frac{\Gamma_1\tilde{\phi}(\Gamma_1\tilde{\phi})_i(\Gamma_1^2\tilde{\phi})_j}{[t^2-y^2]^2}\big\|_{L_{y}^1},\,i,j\in \{1,2\}.
\]
Observe that we have by that same lemma
\begin{align*}
\chi_{y\ll t}\big|\Gamma_1\tilde{\phi}\big|& \leq\chi_{y\ll t}\big|(\Gamma_1\tilde{\phi})_1\big| +\chi_{y\ll t}\big|(\Gamma_1\tilde{\phi})_2\big|\\
&\leq\chi_{y\ll t}\big|(\Gamma_1\tilde{\phi})_1\big| + \chi_{y\ll t}\langle y\rangle^{\frac{1}{2}}\big(\int_0^y|\partial_y(\Gamma_1\tilde{\phi})_2|^2\,d\tilde{y}\big)^{\frac{1}{2}}\\
&\lesssim \eps\langle t\rangle^{\frac{1}{2}+100\nu}.
\end{align*}
Then when $j = 2$, we get 
\begin{align*}
\big\|\chi_{y\ll t}\langle y\rangle^{-\frac{1}{2}+\gamma}\frac{\Gamma_1\tilde{\phi}(\Gamma_1\tilde{\phi})_i(\Gamma_1^2\tilde{\phi})_2}{[t^2-y^2]^2}\big\|_{L_{y}^1}&
\lesssim \jb{t}^{-3+\gamma} \|\Gamma_1\tilde{\phi}\|_{L_y^\infty}\|(\Gamma_1\tilde{\phi})_i\|_{L_y^\infty}\|\frac{(\Gamma_1^2\tilde{\phi})_2}{\langle y\rangle}\|_{L_y^2}\\
&\lesssim \eps^3 \jb{t}^{-2+300\nu+\gamma}.
\end{align*}

On the other hand, if $j = 1$, then we obtain 
\begin{align*}
\big\|\chi_{y\ll t}\langle y\rangle^{-\frac{1}{2}+\gamma}\frac{\Gamma_1\tilde{\phi}(\Gamma_1\tilde{\phi})_i(\Gamma_1^2\tilde{\phi})_1}{[t^2-y^2]^2}\big\|_{L_{y}^1}&
\lesssim \jb{t}^{-4+\gamma}\|\Gamma_1\tilde{\phi}\|_{L_y^\infty}\|(\Gamma_1\tilde{\phi})_i\|_{L_y^\infty}\|(\Gamma_1^2\tilde{\phi})_1\|_{L_y^2}\\
&\lesssim \eps^3\jb{t}^{-\frac{3}{2}+\gamma+200\nu-\delta_1}. 
\end{align*}
The remaining terms above are more of the same. 
\\

{\it{(II): the region near the light cone; $y\sim t$}}. 
We split this into two terms, one restricted to the region very close
to the light cone, i.\ e.\ $|y-t|<\langle t\rangle^{-\delta_2}$, the other away from the light cone $|y-t|\geq\langle t\rangle^{-\delta_2}$. Here $\delta_2\gg\gamma>0$ is a small constant to be determined. We start with the latter case 
\\

{\it{(IIa): The estimate away from the light cone, $|y-t|\geq\langle
t\rangle^{-\delta_2}$.}} We further distinguish between a small
frequency and a large frequency case; this we accomplish using the 
\emph{standard Littlewood-Paley projectors} in the spatial variable 
$y$, which we denote by $P_{<s}$ and $P_{\geq s}$, and which are defined via a smooth cut-off
function using the standard Fourier transform (and should be be
confused with $P_c$ and $P_d$ defined relative to the \emph{distorted}
Fourier transform). 

Specifically, write 
\begin{equation}\label{eq:technical9}\begin{split}
&\tilde{\phi}_t\frac{\Gamma_2\tilde{\phi}\Gamma_2\tilde{\phi}_t - \Gamma_1\tilde{\phi}\Gamma_1\tilde{\phi}_t}{t^2-y^2} - \tilde{\phi}_y\frac{\Gamma_2\tilde{\phi}\Gamma_2\tilde{\phi}_y - \Gamma_1\tilde{\phi}\Gamma_1\tilde{\phi}_y}{t^2-y^2}\\
&=\tilde{\phi}_t\frac{\Gamma_2\tilde{\phi}P_{<t^{-\delta_3}}(\Gamma_2\tilde{\phi}_t) - \Gamma_1\tilde{\phi}P_{<t^{-\delta_3}}(\Gamma_1\tilde{\phi}_t)}{t^2-y^2}\\& - \tilde{\phi}_y\frac{\Gamma_2\tilde{\phi}P_{<t^{-\delta_3}}(\Gamma_2\tilde{\phi}_y) - \Gamma_1\tilde{\phi}P_{<t^{-\delta_3}}(\Gamma_1\tilde{\phi}_y)}{t^2-y^2}\\
&+\tilde{\phi}_t\frac{\Gamma_2\tilde{\phi}P_{\geq t^{-\delta_3}}(\Gamma_2\tilde{\phi}_t) - \Gamma_1\tilde{\phi}P_{\geq t^{-\delta_3}}(\Gamma_1\tilde{\phi}_t)}{t^2-y^2}\\& - \tilde{\phi}_y\frac{\Gamma_2\tilde{\phi}P_{\geq t^{-\delta_3}}(\Gamma_2\tilde{\phi}_y) - \Gamma_1\tilde{\phi}P_{\geq t^{-\delta_3}}(\Gamma_1\tilde{\phi}_y)}{t^2-y^2}\\
\end{split}\end{equation}
where $\delta_3\gg \delta_2$. We have 
\begin{align*}
&\big\|\chi_{t\sim y, |t-y|\geq t^{-\delta_2}}\langle y\rangle^{-\frac{1}{2}+\gamma}\tilde{\phi}_{t,y}\frac{\Gamma\tilde{\phi}P_{<t^{-\delta_3}}(\Gamma\tilde{\phi}_{t,y})}{t^2-y^2}\big\|_{L_{y}^1}\\
&\lesssim \jb{t}^{-1+\gamma}\log t\|\tilde{\phi}_{t,y}\|_{L_y^\infty}\|\chi_{y\sim t}\langle y\rangle^{-\frac{1}{2}}\Gamma\tilde{\phi}\|_{L_y^\infty}\|\chi_{y\sim t}P_{<t^{-\delta_3}}(\Gamma\tilde{\phi}_{t,y})\|_{L_y^\infty}
\end{align*}
where we have used the factor $(t-y)^{-1}$ to control the $L_y^1$-integral. Also, $\Gamma$ stands for either $\Gamma_1$ or $\Gamma_2$. On account of Remark~\ref{rem:1}, we have 
\[
\|\chi_{y\sim t}\langle y\rangle^{-\frac{1}{2}}\Gamma\tilde{\phi}\|_{L_y^\infty}\lesssim \eps \langle t\rangle^{100\nu}.
\]
On the other hand, from Bernstein's inequality, we get 
\[
\|\chi_{y\sim t}P_{<t^{-\delta_3}}(\Gamma\tilde{\phi}_{t,y})\|_{L_y^\infty}\lesssim \eps \langle t\rangle^{-\frac{\delta_3}{2}+100\nu}
\]
It follows that 
\begin{align*}
&\big\|\chi_{t\sim y, |t-y|\geq t^{-\delta_2}}\langle y\rangle^{-\frac{1}{2}+\gamma}\tilde{\phi}_{t,y}\frac{\Gamma\tilde{\phi}P_{<t^{-\delta_3}}(\Gamma\tilde{\phi}_{t,y})}{t^2-y^2}\big\|_{L_{y}^1}\\
&\lesssim \eps^3\jb{t}^{-1+\gamma+201\nu - \frac{\delta_3}{2}}\jb{\log
t}.
\end{align*}
This reduces things to the large frequency case, i.\ e.\ the last two expressions in \eqref{eq:technical9}. Here the idea is to again invoke the 'double null-structure' as in the right-hand side of \eqref{eq:almostthere}. This causes one technical complication as we need to commute frequency localizers and vector fields. Note that 
\[
[\Gamma_2, P_{\geq t^{-\delta_3}}]
\]
acts boundedly in the $L^2_{dy}$-sense. Also, we have 
\[
\big\|[\Gamma_1, P_{\geq t^{-\delta_3}}]\tilde{\phi}\big\|_{L_y^2}\lesssim t^{\delta_3}\big\|\tilde{\phi}_t\|_{L_y^2}.
\]
It follows that in order to bound the last two terms in
\eqref{eq:technical9}, we need to bound the following expressions: 
\begin{equation}\label{eq:technical10}\begin{split}
&\big\|\chi_{t\sim y, |t-y|\geq t^{-\delta_2}}\langle y\rangle^{-\frac{1}{2}+\gamma}\Gamma\tilde{\phi}\frac{\Gamma\tilde{\phi}P_{\geq t^{-\delta_3}}\Gamma^2\tilde{\phi}}{[t^2-y^2]^2}\big\|_{L_{y}^1}\\
&\big\|\chi_{t\sim y, |t-y|\geq t^{-\delta_2}}\langle y\rangle^{-\frac{1}{2}+\gamma}\Gamma\tilde{\phi}\frac{\Gamma\tilde{\phi}t^{\delta_3}(\Gamma\tilde{\phi})_t}{[t^2-y^2]^2}\big\|_{L_{y}^1},\\
\end{split}\end{equation}
where $\Gamma$ represents either $\Gamma_1$ or $\Gamma_2$. For the first expression, one writes formally
\[
P_{\geq t^{-\delta_3}}\Gamma^2\tilde{\phi}\lesssim t^{\delta_3}\partial_y\Gamma^2\tilde{\phi}.
\]
Keeping in mind the physical localization due to the cutoff $\chi_{t\sim y, |t-y|\geq t^{-\delta_2}}$ as well as Remark~\ref{rem:1}, and the bound 
\[
\|\chi_{y\sim t}\Gamma\tilde{\phi}\|_{L_y^\infty}\lesssim \eps\langle t\rangle^{\frac{1}{2}+100\nu}, 
\]
we bound the first term in \eqref{eq:technical10} by\begin{align*}
&\big\|\chi_{t\sim y, |t-y|\geq t^{-\delta_2}}\langle y\rangle^{-\frac{1}{2}+\gamma}\Gamma\tilde{\phi}\frac{\Gamma\tilde{\phi}P_{\geq t^{-\delta_3}}\Gamma^2\tilde{\phi}}{[t^2-y^2]^2}\big\|_{L_{y}^1}\\
&\lesssim \jb{t}^{-\frac{5}{2}+\gamma+\delta_2+\delta_3}\jb{\log t}\|\Gamma\tilde{\phi}\|_{L_y^\infty}^2\|\chi_{y\sim t}\partial_y\Gamma^2\tilde{\phi}\|_{L_y^\infty}\\
&\lesssim
\eps^3\jb{t}^{-\frac{3}{2}+\gamma+\delta_2+\delta_3+300\nu}\jb{\log
t}.
\end{align*}
The second term in \eqref{eq:technical10} is handled similarly. 
\\

{\it{(IIb): The estimate near the light cone, $|y-t|<\langle t\rangle^{-\delta_2}$.}} Here we work again with the 'intermediate null-fom expansion' as in the first line of  \eqref{eq:technical9}. Noting that schematically
\[
(\Gamma_1 - \Gamma_2)\tilde{\phi}\sim (t-y)\tilde{\phi}_{t,y},
\]
we get 
\begin{equation}\label{eq:technical11}\begin{split}
&\tilde{\phi}_t\frac{\Gamma_2\tilde{\phi}\Gamma_2\tilde{\phi}_t - \Gamma_1\tilde{\phi}\Gamma_1\tilde{\phi}_t}{t^2-y^2} - \tilde{\phi}_y\frac{\Gamma_2\tilde{\phi}\Gamma_2\tilde{\phi}_y - \Gamma_1\tilde{\phi}\Gamma_1\tilde{\phi}_y}{t^2-y^2}\\
&\sim \tilde{\phi}_{t,y}\frac{\tilde{\phi}_{t,y}\Gamma\tilde{\phi}_{t,y}}{t+y} + \tilde{\phi}_{t,y}\frac{\Gamma\tilde{\phi}\nabla_{t,y}^2\tilde{\phi}}{t+y}.
\end{split}\end{equation}
We then easily get the bound 
\begin{align*}
\big\|\chi_{|y-t|<t^{-\delta_2}}\langle
y\rangle^{-\frac{1}{2}+\gamma}\eqref{eq:technical11}\big\|_{L_{y}^1}\lesssim
\eps^3\jb{t}^{-1+\gamma-\delta_2+102\nu},
\end{align*}
which is admissible since we may arrange $\max\{\gamma, \nu\}\ll \delta_2$. 
\\

{\it{(III): exterior of the light cone, $y\gg t$}}. This is handled analogously to {\it{(I)}}. 
\\

This finally completes the proof of estimates \eqref{eq:disp1} and \eqref{eq:disp2}.


\section{Control over the unstable mode}\label{sec:existencea}


To complete the proof of Proposition~\ref{prop:Core}, we need to prove the existence of $a$ such that the estimates \eqref{eq:unstablebound} are satisfied. 

\begin{lem}\label{lemma:existencea}
Let $\underline{\phi}$ be any extension to $t\in[0,+\infty)$ of $\phi$ which satisfies the bootstrap assumptions  \eqref{eq:ener1boot}-\eqref{eq:localenerboot} on $t\in [0,+\infty)$. Let $b\in\R$ given by
$$b=\left(a+\langle \tphi_1, g_d\rangle+\frac{\langle \tphi_2, g_d\rangle}{k_d}-\frac{1}{k_d}\int_0^{+\infty}\langle (1+y^2)^{\frac{1}{4}}F(\underline{\phi}, \nabla\underline{\phi}, \nabla^2\underline{\phi})(s), g_d\rangle e^{-k_ds}ds\right)e^{k_dT}.$$
Then, there exists $a\in [-\eps^{\frac{3}{2}},\, \eps^{\frac{3}{2}}]$ such that
\begin{eqnarray*}
|b| \lesssim \eps^{\frac{3}{2}}\langle T\rangle^{-2}.
\end{eqnarray*}
\end{lem}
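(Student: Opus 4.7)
The plan is to view the defining expression for $b$ as a function of the shooting parameter $a$ and locate a zero inside $[-\eps^{3/2}, \eps^{3/2}]$ by an intermediate-value argument. Specifically, set
\[
\mathcal{G}(a) := a + \langle \tilde{\phi}_1, g_d\rangle + \frac{\langle \tilde{\phi}_2, g_d\rangle}{k_d} - \frac{1}{k_d}\int_0^{+\infty}\langle (1+y^2)^{\frac{1}{4}} F(\underline{\phi}, \nabla \underline{\phi}, \nabla^2 \underline{\phi}), g_d\rangle e^{-k_d s}\,ds,
\]
so that $b = e^{k_d T}\mathcal{G}(a)$ and the lemma reduces to finding $a \in [-\eps^{3/2},\eps^{3/2}]$ with $\mathcal{G}(a)=0$ (which trivially implies $|b|\lesssim \eps^{3/2}\jb{T}^{-2}$).

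First I would check that the defining integral is absolutely convergent and of size $O(\eps^2)$. Since $g_d$ decays exponentially in $y$ and $F$ is at least quadratic in $(\phi,\nabla\phi,\nabla^2\phi)$, the bootstrap pointwise bound \eqref{eq:disp1boot}, combined with the equation \eqref{eq:Maintilde} to trade $\partial_t^2\tilde\phi$ for spatial derivatives controlled by \eqref{eq:ener1boot}, yields on the support of $g_d$ the estimate $|\langle \jb{y}^{1/2} F, g_d\rangle|\lesssim \eps^2 \jb{s}^{-1}$, and the factor $e^{-k_d s}$ provides an integrable majorant. The linear terms in $\mathcal{G}$ are directly controlled by $\delta_0$ via the smallness of $\tilde\phi_{1,2}$.

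Next I would set up the IVT. The preceding estimates show that $|\mathcal{G}(a)-a|\leq C(\delta_0+\eps^2)$ uniformly for $a\in[-\eps^{3/2},\eps^{3/2}]$, where only the integral depends on $a$ through $\underline{\phi}$. Since the standing assumptions force $\delta_0\ll \eps^{3/2}$ and $\eps^2\ll \eps^{3/2}$, we obtain $\mathcal{G}(-\eps^{3/2})<0<\mathcal{G}(\eps^{3/2})$. If $\mathcal{G}$ is continuous, the intermediate-value theorem delivers the desired $a^*$ in the open interval, with $\mathcal{G}(a^*)=0$ and hence $b=0$.

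The main obstacle I expect is the continuity step. One must arrange a continuous (in $a$) family of extensions $\underline{\phi}=\underline{\phi}_a$ — each satisfying the bootstrap assumptions \eqref{eq:ener1boot}--\eqref{eq:localenerboot} on $[0,+\infty)$ — so that the nonlinear integral defining $\mathcal{G}(a)$ is continuous in $a$. The natural route is to exploit Lipschitz continuous dependence of the bootstrap solution on the initial datum on a fixed bootstrap interval (via an energy estimate for the linearization of \eqref{eq:Maintilde} about $\underline{\phi}_a$), and to extend beyond that interval by a canonical cut-off compatible with the bootstrap norms; the continuity of the integral then follows from dominated convergence, using the integrable majorant $\eps^2 \jb{s}^{-1} e^{-k_d s}$ established above. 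Verifying this continuous selection — in particular, that the extension procedure can be arranged to depend continuously on $a$ in a topology fine enough to pass to the limit in $F(\underline{\phi}_a,\nabla\underline{\phi}_a,\nabla^2\underline{\phi}_a)$ — is the technical heart of the argument.
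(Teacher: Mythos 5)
Your proposal is correct and follows essentially the paper's own argument: both rest on the observation that the bracket equals $a + O(\delta_0+\eps^2)$ (linear terms controlled by $\delta_0$, the integral absolutely convergent and $O(\eps^2)$ thanks to the exponential decay of $g_d$ and the bootstrap bounds), with $\delta_0+\eps^2\ll\eps^{\frac32}$, followed by a shooting/connectedness argument in $a\in[-\eps^{\frac32},\eps^{\frac32}]$ that relies on continuity of the flow. The only difference is that the paper never asks for an exact zero: it defines the two disjoint open sets $\{a: b>2\eps^{\frac32}\langle T\rangle^{-2}\}$ and $\{a: b<-2\eps^{\frac32}\langle T\rangle^{-2}\}$, notes they cannot cover the interval, and picks $a$ in the complement (treating the required continuity in $a$ as immediate), whereas you aim for $b=0$ via the intermediate value theorem and correspondingly elaborate the continuous-in-$a$ choice of the extension $\underline{\phi}_a$ — a harmless strengthening of the same idea.
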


\begin{proof}
Note in view of the assumptions on the initial data, the bootstrap assumptions for $\phi$ and the exponential decay of $g_d$ that 
\begin{eqnarray*}
&&\left|\langle \tphi_1, g_d\rangle+\frac{\langle \tphi_2, g_d\rangle}{k_d}-\frac{1}{k_d}\int_0^{+\infty}\langle (1+y^2)^{\frac{1}{4}}F(\underline{\phi}, \nabla\underline{\phi}, \nabla^2\underline{\phi})(s), g_d\rangle e^{-k_ds}ds\right|\\
&\lesssim& \delta_0+\eps^2.
\end{eqnarray*}
We infer
\begin{equation}\label{eq:cavamacher}
\left|be^{-k_dT}-a\right|\lesssim \delta_0+\eps^2.
\end{equation}
Let us now consider the subsets $\mathcal{I}_{\pm}$ of $[-\eps^{\frac{3}{2}},\, \eps^{\frac{3}{2}}]$ defined by
\begin{eqnarray*}
\mathcal{I}_+ &=& \{a\in [-\eps^{\frac{3}{2}},\, \eps^{\frac{3}{2}}]\,/\,b>2 \eps^{\frac{3}{2}}\langle T\rangle^{-2}\},\\
\mathcal{I}_- &=& \{a\in [-\eps^{\frac{3}{2}},\, \eps^{\frac{3}{2}}]\,/\,b<-2 \eps^{\frac{3}{2}}\langle T\rangle^{-2}\}.
\end{eqnarray*}
In view of \eqref{eq:cavamacher} and the fact that we may always assume that $T$ satisfies 
$$e^{k_dT}> 4\langle T\rangle^2,$$ 
we immediately see that $\pm \eps^{\frac{3}{2}}\in \mathcal{I}_{\pm}$. Furthermore, by the continuity of the flow, $\mathcal{I}_{\pm}$ are clearly open. Thus, $\mathcal{I}_{\pm}$ are two open, nonempty and disjoint subsets of $[-\eps^{\frac{3}{2}},\, \eps^{\frac{3}{2}}]$. Hence, there exists $a\in [-\eps^{\frac{3}{2}},\, \eps^{\frac{3}{2}}]$ such that 
$$a\in [-\eps^{\frac{3}{2}},\, \eps^{\frac{3}{2}}]\setminus (\mathcal{I}_+\cup \mathcal{I}_-).$$ 
This concludes the proof of the lemma.
\end{proof}

For $a$ given by Lemma \ref{lemma:existencea}, we now prove \eqref{eq:unstablebound}. In view of the formula for $h$ of Lemma \ref{lemma:formluah} and the definition of $b$, we have
\begin{eqnarray*}
h(t) & =& b+\frac{1}{2k_d}\left(\int_t^{+\infty}\langle (1+y^2)^{\frac{1}{4}}F(\underline{\phi}, \nabla\underline{\phi}, \nabla^2\underline{\phi})(s), g_d\rangle  e^{-k_ds}ds\right)e^{k_dt}\nonumber\\
&+&\frac{1}{2}\left(a+\langle \tphi_1, g_d\rangle-\frac{\langle \tphi_2, g_d\rangle}{k_d}+\frac{1}{k_d}\int_0^t\langle (1+y^2)^{\frac{1}{4}}F(\phi, \nabla\phi, \nabla^2\phi)(s), g_d\rangle 
e^{k_ds}ds\right)e^{-k_dt}.\nonumber
\end{eqnarray*}
Let $\underline{h}$ given by
$$\underline{h}(t)=h(t)-b-\frac{1}{2}\left(a+\langle \tphi_1, g_d\rangle-\frac{\langle \tphi_2, g_d\rangle}{k_d}\right)e^{-k_dt}.$$ 
Then, $\underline{h}$ can be also written as
\begin{eqnarray}\label{eq:newformulah}
\underline{h}(t) &=& \frac{1}{2k_d}\left(\int_t^{+\infty}\langle (1+y^2)^{\frac{1}{4}}F(\underline{\phi}, \nabla\underline{\phi}, \nabla^2\underline{\phi})(s), g_d\rangle e^{-k_ds}ds\right)e^{k_dt}\\
&&+\frac{1}{2k_d}\left(\int_0^t\langle (1+y^2)^{\frac{1}{4}}F(\phi, \nabla\phi, \nabla^2\phi)(s), g_d\rangle 
e^{k_ds}ds\right)e^{-k_dt}.\nonumber
\end{eqnarray}

\begin{rem}
The point of introducing an extension $\underline{\phi}$ of $\phi$ to $t\in [0, +\infty)$ is to avoid boundary terms at $t=T$ when we will integrate by parts below in the formula \eqref{eq:newformulah} for $\underline{h}$.
\end{rem}

In view of Lemma \ref{lemma:existencea} and the assumptions on the initial data, it suffices to prove \eqref{eq:unstablebound} with $h$ replaced with $\underline{h}$. Using that 
\[
\big|F(\phi, \nabla\phi, \nabla^2\phi)\big|\lesssim \langle y\rangle^{-2}|\langle\nabla_{t,y}\rangle^2\phi|^2 + |\nabla_{t,y}\phi|^2|\nabla_{t,y}^2\phi|
\]
one immediately infers from \eqref{eq:newformulah} that 
\[
|\partial_t^{\beta}\underline{h}(t)|\lesssim \eps^2\langle
t\rangle^{-1-2\delta_1},\,\beta+1\leq N_1.
\]

For the weighted derivatives, we first have 
\begin{align*}
t\underline{h}'(t) &= \frac{t}{2}e^{k_d t}\int_t^{+\infty} e^{-k_d s}\langle (1+y^2)^{\frac{1}{4}}F(\phi, \nabla\phi, \nabla^2\phi)(s, \cdot), g_d\rangle \,ds\\
&-\frac{t}{2}e^{-k_d t}\int_0^t e^{k_d s}\langle (1+y^2)^{\frac{1}{4}}F(\phi, \nabla\phi, \nabla^2\phi)(s, \cdot), g_d\rangle \,ds\\
&=\frac{t}{2k_d}e^{k_d t}\int_t^{+\infty} (-\partial_s)(e^{-k_d s})\langle (1+y^2)^{\frac{1}{4}}F(\phi, \nabla\phi, \nabla^2\phi)(s, \cdot), g_d\rangle \,ds\\
&-\frac{t}{2k_d}e^{-k_d t}\int_0^t \partial_s(e^{k_d s})\langle (1+y^2)^{\frac{1}{4}}F(\phi, \nabla\phi, \nabla^2\phi)(s, \cdot), g_d\rangle \,ds\\
&=\frac{t}{2k_d}e^{k_d t}\int_t^{+\infty} e^{-k_d s} \partial_s\langle (1+y^2)^{\frac{1}{4}}F(\phi, \nabla\phi, \nabla^2\phi)(s, \cdot), g_d\rangle \,ds\\
&+\frac{t}{2k_d}e^{-k_d t}\int_0^t e^{k_d s} \partial_s\langle (1+y^2)^{\frac{1}{4}}F(\phi, \nabla\phi, \nabla^2\phi)(s, \cdot), g_d\rangle \,ds.
\end{align*}
Continuing in this vein, we get 
\begin{align*}
(t\partial_t)^2\underline{h} + t\underline{h}'(t) &= \frac{t^2}{2}e^{k_d t}\int_t^{+\infty} e^{-k_d s} \partial_s\langle (1+y^2)^{\frac{1}{4}}F(\phi, \nabla\phi, \nabla^2\phi)(s, \cdot), g_d\rangle \,ds\\
&-\frac{t^2}{2}e^{-k_d t}\int_0^t e^{k_d s} \partial_s\langle (1+y^2)^{\frac{1}{4}}F(\phi, \nabla\phi, \nabla^2\phi)(s, \cdot), g_d\rangle \,ds\\
&=\frac{t^2}{2k_d}e^{k_d t}\int_t^{+\infty} (-\partial_s)(e^{-k_d s})\partial_s\langle (1+y^2)^{\frac{1}{4}}F(\phi, \nabla\phi, \nabla^2\phi)(s, \cdot), g_d\rangle \,ds\\
&-\frac{t^2}{2k_d}e^{-k_d t}\int_0^t \partial_s(e^{k_d s}) \partial_s\langle (1+y^2)^{\frac{1}{4}}F(\phi, \nabla\phi, \nabla^2\phi)(s, \cdot), g_d\rangle \,ds
\end{align*}
and performing the integration by parts, we obtain 
\begin{align*}
(t\partial_t)^2\underline{h} + t\underline{h}'(t) &= \frac{t^2}{2k_d}e^{k_d t}\int_t^{+\infty} e^{-k_d s}\partial_s^2\langle (1+y^2)^{\frac{1}{4}}F(\phi, \nabla\phi, \nabla^2\phi)(s, \cdot), g_d\rangle \,ds\\
&+\frac{t^2}{2k_d}e^{-k_d t}\int_0^t e^{k_d s} \partial_s^2\langle (1+y^2)^{\frac{1}{4}}F(\phi, \nabla\phi, \nabla^2\phi)(s, \cdot), g_d\rangle \,ds.
\end{align*}
Then  note that 
\begin{align*}
&\frac{t^2}{2k_d}e^{k_d t}\int_t^{+\infty} e^{-k_d s}\partial_s^2\langle (1+y^2)^{\frac{1}{4}}F(\phi, \nabla\phi, \nabla^2\phi)(s, \cdot), g_d\rangle \,ds\\
&=\frac{1}{2k_d}e^{k_d t}\int_t^{+\infty} (\frac{t}{s})^2e^{-k_d s}s^2\partial_s^2\langle (1+y^2)^{\frac{1}{4}}F(\phi, \nabla\phi, \nabla^2\phi)(s, \cdot), g_d\rangle \,ds,
\end{align*}
\begin{align*}
&\frac{t^2}{2k_d}e^{-k_d t}\int_0^t e^{k_d s} \partial_s^2\langle (1+y^2)^{\frac{1}{4}}F(\phi, \nabla\phi, \nabla^2\phi)(s, \cdot), g_d\rangle \,ds\\
&=\frac{1}{2k_d}e^{-k_d t}\int_0^t (\frac{t}{s})^2e^{k_d s} s^2\partial_s^2\langle (1+y^2)^{\frac{1}{4}}F(\phi, \nabla\phi, \nabla^2\phi)(s, \cdot), g_d\rangle \,ds.
\end{align*}
Further, we have the identity 
\[
s^2\partial_s^2 \langle G, g_d\rangle  = \langle \Gamma_2^2G - \Gamma_2G -2y\partial_y\Gamma_2G + y^2\partial_y^2G+2y\pr_yG, g_d\rangle.
\]
The bounds \eqref{eq:unstablebound} are now a straightforward consequence of the structure of 
\[
F(\phi, \nabla\phi, \nabla^2\phi)
\]
and the bounds \eqref{eq:ener1boot} -  \eqref{eq:unstableboundboot}
for $\tilde{\phi}$. This concludes the proof of 
\eqref{eq:unstablebound}, and hence of Proposition \ref{prop:Core}.


\section{Proof of Theorem \ref*{thm:Main}}\label{sec:proofmaintheorem}


We are now in a position to conclude the proof of Theorem \ref{thm:Main}. In view of the choice of the initial data and by the continuity of the flow, note that the bootstrap assumptions are satisfied for some small $T>0$ and for any 
$$a\in [-\eps^{\frac{3}{2}},\, \eps^{\frac{3}{2}}].$$
Then, as a consequence of Proposition \ref{prop:Core}, we have that for any $T>0$, there exists $\eps>0$ small enough  and $a^{(T)}\in [-\eps^{\frac{3}{2}},\, \eps^{\frac{3}{2}}]$ such that the following estimates are satisfied on $t\in [0,T)$
\begin{equation}\label{eq:disp1th}
\|\nabla_{t,y}^{\beta}\phi\|_{L^\infty_{dy}}\lesssim \langle t\rangle^{-\frac{1}{2}},\,0\leq |\beta|\leq \frac{N_1}{2} + C,
\end{equation}
\begin{equation}\label{eq:disp2th}
\|\langle y\rangle^{-\frac{1}{2}}\nabla_{t,y}^{\beta}\phi\|_{L^\infty_{dy}}\lesssim \langle t\rangle^{-\frac{1}{2}-\delta_1},\,0\leq |\beta|\leq \frac{N_1}{2} + C.
\end{equation}
By compactness, we may extract a sequence $(t_n, a_n)$ such that  
$$(t_n)_{n\geq 0}\textrm{ is increasing }, t_n\to +\infty,\textrm{ and }a_n\to a\textrm{ as }n\to +\infty.$$
Then, let us call $\phi_n$ the solution corresponding to $a_n$ and $\phi$ the solution corresponding to $a$. Since the $\phi_n$ satisfy \eqref{eq:disp1th} and \eqref{eq:disp2th} on $[0,t_n)$ with the constants in $\lesssim$ being uniform in $n$, and since we have chosen $(t_n)_{n\geq 0}$ increasing with $t_n\to +\infty$, we deduce that $\phi$ is a global solution satisfying \eqref{eq:disp1th} and \eqref{eq:disp2th} on $[0,+\infty)$ and hence:
$$|\phi(t,.)|\lesssim \langle t\rangle^{-\frac{1}{2}}.$$
This concludes the existence part of the proof of Theorem \ref{thm:Main}.\\

Consider now the question of the Lipschitz continuity of $a$ with respect to the initial data. Let $\phi^{(1)}$ and $\phi^{(2)}$ two solutions corresponding respectively to parameters $a^{(1)}$ and $a^{(2)}$ and initial data $(\phi_1^{(1)}, \phi_2^{(1)})$ and $(\phi_1^{(2)}, \phi_2^{(2)})$ and let $h^{(1)}$ and $h^{(2)}$ the corresponding projections on $g_d$. Let us also denote
$$\Delta a=a^{(1)}-a^{(2)},\,\,(\Delta\phi_1, \Delta\phi_2)=(\phi_1^{(1)}-\phi_1^{(2)}, \phi_2^{(1)}-\phi_2^{(2)}),$$
and
$$\Delta \phi=\phi^{(1)}-\phi^{(2)},\,\, \Delta h=h^{(1)}-h^{(2)}.$$
$\phi^{(1)}$ and $\phi^{(2)}$ are obtained through the existence part of Theorem \ref{thm:Main} and are thus global and satisfy estimates \eqref{eq:ener1}-\eqref{eq:unstablebound} on $t\in [0,+\infty)$. Using these bounds together with the linear estimates of Propositions \ref{eq:keyest1}, \ref{eq:keyest2} and \ref{eq:keyest3}, we derive the following estimate for the difference $\Delta\phi$:
\begin{equation}\label{eq:eqofthedifference}
|\langle\nabla_{t,y}\rangle^2\Delta\phi(t,.)|\lesssim |\Delta a|+\|(\Delta\phi_1, \Delta\phi_2)\|_{X_0},\,\,t\in[0,+\infty).
\end{equation}
Furthermore, we have in view of Lemma \ref{lemma:formluah}
\begin{eqnarray*}
&&\Delta h(t)\\
&=& \frac{1}{2}\left(\Delta a+\langle \Delta\tphi_1, g_d\rangle+\frac{\langle \Delta\tphi_2, g_d\rangle}{k_d}-\frac{1}{k_d}\int_0^t\langle (1+y^2)^{\frac{1}{4}}\Delta F(\phi, \nabla\phi, \nabla^2\phi)(s), g_d\rangle e^{-k_ds}ds\right)e^{k_dt}\nonumber\\
&+&\frac{1}{2}\left(\Delta a+\langle \Delta\tphi_1, g_d\rangle-\frac{\langle \Delta\tphi_2, g_d\rangle}{k_d}+\frac{1}{k_d}\int_0^t\langle (1+y^2)^{\frac{1}{4}}\Delta F(\phi, \nabla\phi, \nabla^2\phi)(s), g_d\rangle 
e^{k_ds}ds\right)e^{-k_dt},\nonumber
\end{eqnarray*}
where 
$$\Delta F(\phi, \nabla\phi, \nabla^2\phi)=F(\phi^{(1)}, \nabla\phi^{(1)}, \nabla^2\phi^{(1)})-F(\phi^{(2)}, \nabla\phi^{(2)}, \nabla^2\phi^{(2)}).$$
Together with the estimates for $\phi^{(1)}$, $\phi^{(2)}$ and \eqref{eq:eqofthedifference}, we deduce
$$|\Delta a|\lesssim ( |\Delta a|+\|(\Delta\phi_1, \Delta\phi_2)\|_{X_0})e^{-k_d t}+\|(\Delta\phi_1, \Delta\phi_2)\|_{X_0}+( |\Delta a|+\|(\Delta\phi_1, \Delta\phi_2)\|_{X_0})^2.$$
We let $t\to +\infty$ which yields
$$|\Delta a|\lesssim \|(\Delta\phi_1, \Delta\phi_2)\|_{X_0}+( |\Delta a|+\|(\Delta\phi_1, \Delta\phi_2)\|_{X_0})^2$$
and hence
$$|\Delta a|\lesssim \|(\Delta\phi_1, \Delta\phi_2)\|_{X_0}$$
which implies the Lipschitz continuity of $a$ with respect to the initial data.\\ 

This concludes proof of Theorem \ref{thm:Main}.


\appendix



\section{Derivation of the equation of motion}\label{app:reduction}


As discussed in the beginning of Section \ref{sect:formulation}, we
consider the mapping depending on a scalar function $\phi(t,y)$
satisfying $\phi(t,y) = \phi(t,-y)$:
\[ (t,y,\omega) \mapsto \left( t,\jb{y} +
\frac{\phi(t,y)}{\jb{y}}, \sinh^{-1}y - \frac{y}{\jb{y}}
\phi(t,y),\omega\right)\]
and we ask that this mapping has vanishing mean curvature. We remind
our readers that we use the Japanese bracket notation $\jb{y} =
\sqrt{1 + y^2}$. Using that
the mean curvature is the first variation of the volume form, we can
derive the equation of motion by considering formally the
Euler-Lagrange equation associated to the volume density of the
pull-back metric. An elementary computation shows that for the mapping
above, the pull-back metric is
\begin{multline}
-(1 - \phi_t^2) \mathrm{d}t^2 + \left( 1 -\frac{2\phi}{\jb{y}^2} +
\frac{\phi^2}{\jb{y}^4} + \phi_y^2\right) \mathrm{d}y^2 \\
+ 2 \phi_t\phi_y \mathrm{d}t\mathrm{d}y + \left( 1 + y^2 + 2\phi +
\frac{\phi^2}{\jb{y}^2}\right)
\mathrm{d}\omega^2 
\end{multline}
whose associated volume element is 
\begin{equation}
\underbrace{\left( \jb{y} + \frac\phi{\jb{y}}\right)}_A \sqrt{ (1
- \phi_t^2)\vphantom{\frac\phi{\jb{y}^2}}\smash{\underbrace{(1
- \frac\phi{\jb{y}^2})^2}_{B^2}} + \phi_y^2}~ \mathrm{d}y~ \mathrm{d}t~
\mathrm{d}\omega~ .
\end{equation}
Using $L = A\sqrt{B^2 ( 1 - \phi_t^2) + \phi_y^2}$ as the Lagrangian
density, we obtain the Euler-Lagrange equations: 
\[ \frac{\delta L}{\delta\phi} = \frac{\partial}{\partial t}
\left(\frac{\delta L}{\delta \phi_t}\right) + \frac{\partial}{\partial y}
\left(\frac{\delta L}{\delta \phi_y}\right). \]
Let 
$$K = B^2(1-\phi_t^2) + \phi_y^2.$$ 
We have
\begin{align*}
A &= \jb{y} + \frac{\phi}{\jb{y}},
& A' &= \frac{1}{\jb{y}}, \\
B &= 1 - \frac{\phi}{\jb{y}^2}, &
B' &= - \frac{1}{\jb{y}^2},\\
\partial_tA &= A'\phi_t, & \partial_tB &= B' \phi_t, \\
\partial_yA &= A'\phi_y + \frac{y}{\jb{y}} -
\frac{y\phi}{\jb{y}^3} & \partial_yB &= B'\phi_y+
\frac{2y\phi}{\jb{y}^4}\\
&= A' \phi_y + y A' B, & &= B'\phi_y + 2yB'(B-1),
\end{align*}
and also
\begin{align*}
K' &= 2BB' - 2BB'\phi_t^2,\\
\partial_tK &= 2\phi_y \phi_{ty} + 2B\partial_tB -
2B\partial_t B\phi_t^2 -
2B^2\phi_t\phi_{tt}, \\
\partial_yK &= 2\phi_y\phi_{yy} + 2B\partial_yB -
2B\partial_yB \phi_t^2 - 2B^2\phi_t.
\phi_{ty}.
\end{align*}
The Euler-Lagrange equations become
\[
A'\sqrt{K} + A\frac{K'}{2\sqrt K} = -\frac{\partial}{\partial t}
\left[  \frac{AB^2 \phi_t}{\sqrt{K}} \right] +
\frac{\partial}{\partial y} \left[
\frac{A\phi_y}{\sqrt{K}}\right]
\]
which implies
\begin{align*}
ABB'\left[ 1 - (\phi_t)^2\right] K + A'K^2 &= K^{\frac32}
\frac{\partial}{\partial y} \left[
\frac{A\phi_y}{\sqrt{K}}\right] -
K^{\frac32}\frac{\partial}{\partial t}
\left[  \frac{AB^2 \phi_t}{\sqrt{K}} \right]\\
&= K\left[ \partial_yA \phi_y + A\phi_{yy}\right] -
\frac12 A\phi_y\partial_yK +
\frac12AB^2\phi_t\partial_tK\\
& \quad -K \left[ \partial_t A B^2 \phi_t +
2AB\partial_tB\phi_t + AB^2 \phi_{tt}\right]\\
&= K\left[ A'(\phi_y)^2 + yA'B\phi_y +
A\phi_{yy}\right] - A(\phi_y)^2
\phi_{yy}\\
&\quad - AB\partial_yB\phi_y +
AB\partial_yB\phi_y(\phi_t)^2 +
2 AB^2\phi_y\phi_t\phi_{ty} \\
&\quad + AB^3\partial_tB\phi_t - AB^3
\partial_tB(\phi_t)^3
-AB^4 (\phi_t)^2 \phi_{tt}\\ &\quad - KB\left[AB
\phi_{tt} + A'B (\phi_t)^2 + 2AB'(\phi_t)^2
\right].
\end{align*}
So we arrive at
\begin{align}
KABB' &+ A'K\left[ (\phi_y)^2 + B^2 -
B^2(\phi_t)^2\right] \nonumber\\
&= KA'(\phi_y)^2 + KyA'B\phi_y + KA
\phi_{yy} - KAB^2 \phi_{tt} -
KA'B^2(\phi_t)^2 \label{eqn:derA3}\\
&\quad -
KABB'(\phi_t)^2 - A(\phi_y)^2 \phi_{yy} +
  2AB^2\phi_y\phi_t\phi_{ty} -
AB^4(\phi_t)^2\phi_{tt} \nonumber\\
&\quad + ABB'\Big[ 2y(B-1)(\phi_y)(\phi_t)^2 -
(\phi_y)^2 - 2y(B-1)\phi_y \nonumber \\
&\qquad\qquad \qquad +
\underbrace{(\phi_y)^2(\phi_t)^2 +
B^2(\phi_t)^2 -
B^2(\phi_t)^4}_{K(\partial_t\phi)^2}\Big]\nonumber
\end{align}
and hence 
\begin{align}
KABB'  + KA'B^2 &= yKA'B\phi_y + KA\phi_{yy} -
KAB^2\phi_{tt} \nonumber \\
& \quad - A(\phi_y)^2 \phi_{yy} +
  2AB^2\phi_y\phi_t\phi_{ty} -
AB^4(\phi_t)^2\phi_{tt} \label{120614-2}\\
&\quad + ABB'\left[ 2y(B-1)(\phi_y)(\phi_t)^2 -
(\phi_y)^2 - 2y(B-1)\phi_y \right]. \nonumber
\end{align}
We deduce, after replacing $K$ by its definition
\begin{multline*}
\left[(\phi_y)^2 + B^2 -
B^2(\phi_t)^2\right]\left[ ABB' + A'B^2 -
yA'B\phi_y - A\phi_{yy} +
AB^2\phi_{tt}\right] \\
 = - A(\phi_y)^2 \phi_{yy} +
  2AB^2\phi_y\phi_t\phi_{ty} -
AB^4(\phi_t)^2\phi_{tt} \\
 + ABB'\left[ 2y(B-1)(\phi_y)(\phi_t)^2 -
(\phi_y)^2 - 2y(B-1)\phi_y \right]~.
\end{multline*}
This we can regroup, after collecting all terms depending on the
second derivatives, to get
\begin{multline*}
A B^2\left[ -\phi_{yy} + (\phi_t)^2 \phi_{yy} + (\phi_y)^2  \phi_{tt}
+ B^2\phi_{tt} - 2 \phi_y \phi_t \phi_{ty}\right] \\
= A'B(y\phi_y- B)\left[(\phi_y)^2 + B^2 -
B^2(\phi_t)^2\right]\\
 + ABB'\left[ \left(B^2 - \frac{2y\phi}{\jb{y}^2}\phi_y\right)\left((\phi_t)^2 -1\right)-
2(\phi_y)^2 \right]
\end{multline*}
from which we divide through by $\jb{y} B$ to obtain
\begin{align*}
\left( 1 - \frac{\phi^2}{\jb{y}^4}\right) &\big[ -\phi_{yy} +
(\phi_t)^2 \phi_{yy} + (\phi_y)^2  \phi_{tt}
+ B^2\phi_{tt} - 2 \phi_y \phi_t \phi_{ty}\big] \\
& = \frac{y\phi_y}{\jb{y}^2}\left[(\phi_y)^2 + B^2 (1-(\phi_t)^2)\right] - \frac{B}{\jb{y}^2} \left[ (\phi_y)^2 + B^2(1 -
(\phi_t)^2)\right]\\
&\quad - \left( \frac{1}{\jb{y}^2} + \frac{\phi}{\jb{y}^4}\right)
\left[ 
\frac{2y\phi}{\jb{y}^2}\phi_y\left(1- (\phi_t)^2 \right)- B^2(1 -
(\phi_t)^2) - 2(\phi_y)^2 \right],
\end{align*}
separating the quasilinear and semilinear contributions to the two
sides of the equality sign. 
The left hand side we see is precisely
\[ \phi_{tt} - \phi_{yy} + Q_2 + Q_3 + Q_4 \]
where $Q_*$ are defined in \eqref{eqs:mainQuasi}. The right hand side
exhibits some cancellations among the summands, and can be rewritten as
\begin{multline*}
\frac{y\phi_y}{\jb{y}^2}\left[(\phi_y)^2 + B^2 (1-(\phi_t)^2)\right] 
- \frac{1}{\jb{y}^2}\left[ \frac{2y\phi}{\jb{y}^2}\phi_y (1 -
  (\phi_t)^2) - (\phi_y)^2\right]\\ - \frac{\phi}{\jb{y}^4} \left[
\frac{2y\phi}{\jb{y}^2}\phi_y\left(1- (\phi_t)^2 \right)- 2(B^2-1) - 2
+ 2B^2(\phi_t)^2 - 3(\phi_y)^2 \right]~.
\end{multline*}
Reorganizing a little bit and picking out the terms, we see that the
above expression is equal to 
\[ \frac{y}{\jb{y}^2} \phi_y + \frac{2}{\jb{y}^4} \phi - S_2 - S_3 -
S_4 \]
where the semilinear terms $S_*$ are defined in \eqref{eqs:mainSemi}. 
With this we obtain \eqref{eq:Main}. 


\section{Technical lemmas}\label{app:technical}


\begin{lem}\label{lemma:appB}
We have
\[
\big\|\nabla_{t,y}^{\beta}\tilde{\psi}_{ttt}\big\|_{L_y^2(y\ll t)} + \big\|\nabla_{t,y}^{\beta}\tilde{\psi}_{tty}\big\|_{L_y^2(y\ll t)}\lesssim \eps\langle t\rangle^{(1+[\frac{2|\beta|}{N_1}])100\nu-2},\,|\beta|+2\leq N_1.
\]
One also gets the bound 
\[
\big\|\langle y\rangle^2\nabla_{t,y}^{\beta}\nabla_{t,y}^3\tilde{\psi}\big\|_{L_y^2(y\gg t)}\lesssim \eps\langle t\rangle^{(1+[\frac{2|\beta|}{N_1}])100\nu},\,|\beta|+2\leq N_1.
\]
\end{lem}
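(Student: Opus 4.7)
The plan is to exploit the algebraic identity
\begin{equation*}
(t^2-y^2)\,\tpsi_{tt} = \Gamma_2^2\tpsi - 2y\,\partial_y\Gamma_2\tpsi + 2y\,\partial_y\tpsi - \Gamma_2\tpsi + y^2\,\square\tpsi,
\end{equation*}
which is precisely the relation invoked near the end of the proof of Lemma~\ref{lem:subtle}. It is verified by expanding $\Gamma_2^2$ in coordinates, using $\partial_y\Gamma_2\tpsi=t\tpsi_{ty}+\tpsi_y+y\tpsi_{yy}$ to eliminate the mixed derivative $\tpsi_{ty}$, and then substituting $\tpsi_{yy}=\tpsi_{tt}+\square\tpsi$. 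Via the equation for $\tpsi$, I would rewrite $\square\tpsi=-V\tpsi+P_cG$ with $V(y)=\tfrac12(3+y^2/2)(1+y^2)^{-2}$, yielding a parametrix-like representation of $\tpsi_{tt}$ in terms of up to two $\Gamma_2$ derivatives of $\tpsi$, a bounded potential contribution $y^2V\tpsi$, and the nonlinear source $y^2P_cG$, all divided by $t^2-y^2$.

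To reach $\tpsi_{ttt}$ and $\tpsi_{tty}$ I would differentiate this representation once more in $t$ and in $y$ respectively, producing schematic expressions
\begin{equation*}
\tpsi_{ttt},\;\tpsi_{tty}\;\sim\; \frac{\nabla_{t,y}\Gamma_2^2\tpsi}{t^2-y^2}+\frac{(t,y)\,\Gamma_2^2\tpsi}{(t^2-y^2)^2}+\text{lower order},
\end{equation*}
then apply $\nabla_{t,y}^\beta$ and distribute by Leibniz. In the interior region $y\ll t$ we have $t^2-y^2\sim t^2$, so each factor of $(t^2-y^2)^{-1}$ contributes exactly $\langle t\rangle^{-2}$; combined with \eqref{eq:ener2boot} applied at $\gamma=2$ to the top-order term $\nabla_{t,y}^{|\beta|+1}\Gamma_2^2\tpsi$ (which is within range since $|\beta|+2\leq N_1$), this produces the first estimate of the lemma, with the exponent exactly $(1+[2|\beta|/N_1])100\nu-2$. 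In the exterior region $y\gg t$ we have $t^2-y^2\sim-\langle y\rangle^2$, so the $\langle y\rangle^2$ weight in the second bound exactly cancels one factor of $(t^2-y^2)^{-1}$; a second factor, when it appears, gives only an extra $\langle y\rangle^{-2}$, which is harmless. For the remaining 3rd-order derivatives $\tpsi_{tyy}$ and $\tpsi_{yyy}$ needed in the second bound, I would invoke the equation in the form $\tpsi_{yy}=\tpsi_{tt}-V\tpsi+P_cG$ to reduce to $\tpsi_{tt}$-type derivatives plus genuinely lower-order terms controlled directly by \eqref{eq:ener1boot} and \eqref{eq:disp2boot}.

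The main technical step will be controlling the nonlinear contribution $\nabla_{t,y}^{|\beta|+1}(y^2 P_cG)$ in $L^2_{dy}$. Here I would split $G=G_1+G_2$ into its weighted quadratic part and its pure-cubic null-form part exactly as in Sections~\ref{sec:energybounds} and \ref{sec:localenergydecay}: the spare $\langle y\rangle^{-2}$ already built into the coefficients of $G_1$ absorbs the $y^2$ weight, while the cubic part $G_2$ is handled through its null structure together with the dispersive bounds \eqref{eq:disp1boot}--\eqref{eq:disp2boot}. All remaining lower-order contributions (coming from $\nabla^{|\beta|+1}\Gamma_2\tpsi$, $\nabla^{|\beta|+2}\tpsi$, $V$, etc.) carry strictly smaller exponents ($10\nu$, $\nu$), so they are comfortably absorbed by the dominant $\Gamma_2^2$ term; no conceptual obstacle remains beyond careful bookkeeping of the $y$-weights and of the powers of $(t^2-y^2)^{-1}$ generated when differentiating the parametrix.
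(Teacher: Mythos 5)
Your proposal is correct and follows essentially the same route as the paper: the paper likewise starts from the identity expressing $(t^2-y^2)\tpsi_{tt}$ through $\Gamma_2^2\tpsi$, $\Gamma_2\tpsi$, the potential term and $y^2P_cG$, differentiates it in $t$ and $y$, uses the equation to trade $\tpsi_{tyy}$ for $\tpsi_{ttt}$, and inverts the resulting system (determinant $(t^2-y^2)^2$) to gain $\langle t\rangle^{-2}$ for $y\ll t$ and $\langle y\rangle^{-2}$ for $y\gg t$, closing with \eqref{eq:ener2boot} and the $G=G_1+G_2$ splitting. The only caveat is that the terms $2ty\,\tpsi_{tty}$ and $2y^2\,\tpsi_{tyy}$ produced by differentiating $-2y\partial_y\Gamma_2\tpsi$ are not genuinely lower order but form the coupled linear system just described; since you handle $\tpsi_{tyy}$ via the equation and the inversion is uniformly nondegenerate, this is bookkeeping rather than a gap.
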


\begin{proof}
To prove this, write the equation for $\tilde{\psi}$ as usual in the form 
\[
-\partial_t^{2}\tilde{\psi} + \partial_y^2\tilde{\psi}+ \frac{1}{2}\frac{3+\frac{y^2}{2}}{(1+y^2)^2}\tilde{\psi} = P_cG.
\]
Compute 
\begin{align*}
\Gamma_2^2\tilde{\psi} &= t^2\tilde{\psi}_{tt} + y^2\tilde{\psi}_{yy} + 2ty\tilde{\psi}_{ty} + t\tilde{\psi}_t + y\tilde{\psi}_y\\
&=(t^2 + y^2)\tilde{\psi}_{tt} + y^2\big[P_cG - \frac{1}{2}\frac{3+\frac{y^2}{2}}{(1+y^2)^2}\tilde{\psi} \big]\\
&+ 2ty\tilde{\psi}_{ty} + \Ga_2\tilde{\psi}.
\end{align*}
By differentiating this equation, we obtain 
\begin{equation}\label{eq:technical13}\begin{split}
(t^2 + y^2)\tilde{\psi}_{ttt} + 2ty\tilde{\psi}_{tty}& = \big(\Gamma_2^2\tilde{\psi}\big)_t - y^2\big[P_cG - \frac{1}{2}\frac{3+\frac{y^2}{2}}{(1+y^2)^2}\tilde{\psi} \big]_t\\
&-2t\tilde{\psi}_{tt} - 2y\tilde{\psi}_{ty} - (\Ga_2\tilde{\psi})_t =:A,
\end{split}\end{equation}
\begin{equation}\label{eq:technical14}\begin{split}
(t^2 + y^2)\tilde{\psi}_{tty} + 2ty\tilde{\psi}_{tyy}& = \big(\Gamma_2^2\tilde{\psi}\big)_y - \big(y^2\big[P_cG - \frac{1}{2}\frac{3+\frac{y^2}{2}}{(1+y^2)^2}\tilde{\psi} \big]\big)_y\\
&-2y\tilde{\psi}_{tt} - 2t\tilde{\psi}_{ty} - (\Ga_2\tilde{\psi})_y =:B.
\end{split}\end{equation}
We can turn this into a linear system for the variables $\tilde{\psi}_{ttt}$, $\tilde{\psi}_{tty}$ by observing that 
\begin{align*}
 2ty\tilde{\psi}_{tyy} -  2ty\tilde{\psi}_{ttt} = 2ty\big[P_cG -  \frac{1}{2}\frac{3+\frac{y^2}{2}}{(1+y^2)^2}\tilde{\psi}\big]_t =:C.
\end{align*}
In order to prove the observation above, it now suffices to show that 
\[
\big\|\nabla_{t,y}^{\beta}A\big\|_{L_y^2(y\ll t)} + \big\|\nabla_{t,y}^{\beta}B\big\|_{L_y^2(y\ll t)} + \big\|\nabla_{t,y}^{\beta}C\big\|_{L_y^2(y\ll t)} \lesssim \eps\langle t\rangle^{(1+[\frac{2|\beta|}{N_1}])100\nu},\,|\beta|+2\leq N_1. 
\] 
Starting with $A$, the only delicate term is $y^2[P_cG]_t$, and here we may easily omit the $P_c$(as the weight $y^2$ gets absorbed by $g_d$ otherwise). 
Then the bound 
\[
\big\|y^2\nabla_{t,y}^{\beta}G_t\big\|_{L_y^2}\lesssim \eps\langle t\rangle^{(1+[\frac{2|\beta|}{N_1}])100\nu}
\]
is clear for all the weighted terms (with weight at least $\langle y\rangle^{-2}$). For the pure cubic terms, we reduce to 
\[
y^2\big(\phi_t^2\tilde{\psi}_{yy}\big)_t,\, y^2\big(\phi_y\phi_t\tilde{\psi}_{ty}\big)_t,\,y^2\big(\phi_y^2\tilde{\psi}_{tt}\big)_t
\]
as well as their derivatives. 
In each of these we gain one factor $t^{-1}$ by placing $(\phi_{t,y})^2$ into $L^\infty$, and an extra factor $t^{-1}$ by using  
\[
\big\|\tilde{\psi}_{tt}\big\|_{L^2_{y}(y\ll t)} + \big\|\tilde{\psi}_{ty}\big\|_{L^2_{y}(y\ll t)}\lesssim t^{-1}\big\|\nabla_{t,y}\langle\Gamma_2\rangle\tilde{\psi}\big\|_{L_y^2}
\]
Finally, the factor $\tilde{\psi}_{yy}$ may be replaced by $\tilde{\psi}_{tt}$ up to easily controllable errors, using the equation for $\tilde{\psi}$. 
\\
The same reasoning applies to the term $B$, except that now we also have the terms 
\[
y\tilde{\psi}_{tt},\,t\tilde{\psi}_{yt},
\]
which are controlled by 
\[
\big\|y\tilde{\psi}_{tt}\big\|_{L_y^2(y\ll t)} + \big\|t\tilde{\psi}_{yt}\big\|_{L_y^2(y\ll t)}\lesssim \big\|\nabla_{t,y}\langle\Gamma_2\rangle\tilde{\psi}\big\|_{L_y^2}.
\]
For term $C$, the new feature is the expression 
\[
2ty  \frac{1}{2}\frac{3+\frac{y^2}{2}}{(1+y^2)^2}\tilde{\psi}_t = 2y\frac{1}{2}\frac{3+\frac{y^2}{2}}{(1+y^2)^2}\big(\Gamma_2\tilde{\psi} - y\tilde{\psi}_y\big).
\]
Then, in view of Lemma~\ref{lem:4}, we obtain 
\begin{eqnarray*}
\big\|\nabla_{t,y}^{\beta}\big(y\frac{1}{2}\frac{3+\frac{y^2}{2}}{(1+y^2)^2}\big(\Gamma_2\tilde{\psi}\big)\big)\big\|_{L_y^2(y\ll t)}&\lesssim& \eps\langle t\rangle^{(1+[\frac{2|\beta|}{N_1}])10\nu}\|\langle y\rangle^{-\frac{1}{2}}\|_{L^2_y(y\ll t)}\\
&\lesssim& \eps\langle t\rangle^{(1+[\frac{2|\beta|}{N_1}])100\nu}.
\end{eqnarray*}
To obtain the second inequality of the lemma, the only new feature is the control of the weighted cubic terms above,
\[
y^2\big(\phi_t^2\tilde{\psi}_{yy}\big)_t,\, y^2\big(\phi_y\phi_t\tilde{\psi}_{ty}\big)_t,\,y^2\big(\phi_y^2\tilde{\psi}_{tt}\big)_t,
\]
in the region $y\gg t$. But we can schematically write 
\[
\big|y^2\big(\phi_t^2\tilde{\psi}_{yy}\big)_t\big|\lesssim \langle y\rangle^{-1}\big|\langle \nabla_{t,y}\rangle\Gamma\tilde{\phi}\langle\nabla_{t,y}\rangle\tilde{\phi}_t\langle\nabla_{t,y}\rangle\Gamma\tilde{\psi}_{t,y}\big|,\,y\gg t,
\]
where $\Gamma   = \Gamma_{1,2}$, and so we get 
\begin{align*}
\big\|y^2\big(\phi_t^2\tilde{\psi}_{yy}\big)_t\big\|_{L^2(y\gg t)}&\lesssim\big\|\langle y\rangle^{-1}\langle \nabla_{t,y}\rangle\Gamma\tilde{\phi}\big\|_{L^\infty(y\gg t)}\big\|\tilde{\phi}_t\big\|_{L^\infty}\big\|\langle\nabla_{t,y}\rangle\Gamma\tilde{\psi}_{t,y}\big\|_{L^2(y\gg t)}\\
&\lesssim \eps^3\langle t\rangle^{21\nu}
\end{align*}
The estimate for higher derivatives is similar. 
This concludes the proof of the lemma.
\end{proof}

\begin{lem}\label{lem:4} 
We have 
\[
\big|\nabla_{t,y}^{\beta}\Gamma_2^{\kappa}\tilde{\psi}\big|(t,y)\lesssim \eps\langle t\rangle^{(1+[\frac{2|\beta|}{N_1}])10^{\kappa}\nu}\langle y\rangle^{\frac{1}{2}},\,|\beta|+\kappa\leq N_1.
\]
\end{lem}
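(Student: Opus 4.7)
The plan is to derive the pointwise bound from the weighted $L^2$ energy bootstrap \eqref{eq:ener2boot} via a one-dimensional Sobolev-type argument based on the fundamental theorem of calculus, after first reducing to an estimate on $\tilde\phi$.

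First I would split $\tilde\psi = \tilde\phi - h(t) g_d(y)$ and control the contribution of the discrete mode directly. By the Leibniz rule, $\nabla_{t,y}^\beta\Gamma_2^\kappa(h(t)g_d(y))$ expands as a sum of products $\partial_t^{\beta_1}(t\partial_t)^i h(t) \cdot \partial_y^{\beta_2}(y\partial_y)^j g_d(y)$ with $i+j \leq \kappa$ and $|\beta_1|+|\beta_2|\leq |\beta|$; the temporal factors are dominated by the bootstrap \eqref{eq:unstableboundboot}, while $(y\partial_y)^j g_d$ inherits the exponential decay of $g_d$. This contribution is much smaller than what is claimed, so it suffices to prove the bound with $\tilde\psi$ replaced by $\tilde\phi$.

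Next, set $f(t,y) := \nabla_{t,y}^\beta \Gamma_2^\kappa \tilde\phi(t,y)$. Since $\tilde\phi$ is even in $y$ by the reflection symmetry assumption, $f$ has fixed parity determined by the number of $y$-derivatives in $\beta$. The fundamental theorem of calculus together with Cauchy--Schwarz yields
\[
|f(t,y)| \leq |f(t,0)| + \left|\int_0^y \partial_{\tilde y} f(t,\tilde y)\, d\tilde y\right| \leq |f(t,0)| + |y|^{1/2}\, \|\partial_y f(t,\cdot)\|_{L^2_{\tilde y}}.
\]
The second term is controlled directly by the energy bootstrap \eqref{eq:ener2boot}, producing exactly the $\eps\,\langle t\rangle^{(1+[\frac{2|\beta|}{N_1}])10^\kappa \nu}\,|y|^{1/2}$ contribution that appears on the right-hand side of the lemma.

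The remaining task, which is the main obstacle, is to control the boundary value $|f(t,0)|$. At $y = 0$ the operator $\Gamma_2$ reduces to $t\partial_t$, so after commuting the spatial components of $\nabla_{t,y}^\beta$ through $\Gamma_2$ (at the cost of strictly lower-order terms that can be handled inductively on $\kappa$) the evaluation at $y = 0$ produces sums of the form $t^a \partial_t^{a+b}\tilde\phi(t,0)$ with $a \leq \kappa$ and $a+b \leq |\beta|+\kappa$. The naive application of \eqref{eq:disp1boot} at $y = 0$ produces a polynomial loss $\langle t\rangle^{\kappa - 1/2}$, which is not immediately compatible with the target. I would absorb this loss by invoking the equation \eqref{eq:Maintilde} at $y = 0$ to trade each pair of time derivatives for spatial derivatives plus the potential term $\tfrac{1}{2}\tfrac{3+y^2/2}{(1+y^2)^2}\tilde\phi$ and the nonlinear contributions, iterating until only spatial derivatives remain, and then appealing to the improved local dispersive bound \eqref{eq:disp2boot} to gain the additional $\langle t\rangle^{-\delta_1}$ decay. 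This is the same mechanism that is used in Lemma~\ref{lem:3} to transfer control from $\Gamma_2$ to $\Gamma_1$, and it delivers pointwise bounds at $y = 0$ consistent with the claim, completing the proof.
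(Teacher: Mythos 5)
Your reduction to $\tilde{\phi}$ via the bootstrap control \eqref{eq:unstableboundboot} of the discrete mode is fine, and your fundamental-theorem-of-calculus/Cauchy--Schwarz step, which produces the $\langle y\rangle^{\frac12}$ growth from the weighted energy bootstrap \eqref{eq:ener2boot}, is exactly the second half of the paper's argument (the paper also dispatches $|\beta|\geq 1$ immediately by $H^1(\R)\subset L^\infty(\R)$, since then both $\nabla^\beta_{t,y}\Gamma_2^\kappa\tilde{\psi}$ and its $y$-derivative fall within \eqref{eq:ener2boot}). The genuine gap is in your treatment of the anchor value $f(t,0)$. At $y=0$ the claim forces $|\Gamma_2^\kappa\tilde{\psi}(t,0)|\lesssim \eps\langle t\rangle^{10^\kappa\nu}$, i.e.\ you must beat the explicit prefactor $t^\kappa$ coming from $\Gamma_2^\kappa=(t\partial_t+y\partial_y)^\kappa$ at $y=0$; this amounts to pointwise decay of $\partial_t^j\tilde{\psi}(t,0)$ at rate $\langle t\rangle^{-\kappa+}$, up to $\langle t\rangle^{-2+}$ when $\kappa=2$. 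The only pointwise inputs you invoke, \eqref{eq:disp1boot}--\eqref{eq:disp2boot}, give at best $\langle t\rangle^{-\frac12-\delta_1}$ with $\delta_1\ll\frac12$, so your terms $t^a\partial_t^{a+b}\tilde{\phi}(t,0)$ are only controlled by $\langle t\rangle^{\kappa-\frac12-\delta_1}$, far above the target. Trading $\partial_t^2$ for $\partial_y^2$ plus potential plus nonlinearity via \eqref{eq:Maintilde} cannot repair this: the loss comes from the factor $t^\kappa$, not from an excess of time derivatives, and no bootstrap bound gives decay at a fixed spatial point faster than $t^{-\frac12-\delta_1}$. You also cannot borrow the mechanism of Lemma~\ref{lem:3}, whose proof itself invokes the present lemma.

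What the paper actually does at this step is an $L^2$, equation-based argument rather than a pointwise one. Writing the wave equation satisfied by $\Gamma_2^\kappa\tilde{\psi}$ and isolating the potential term, which has definite size $\simeq\langle y\rangle^{-2}$, one bounds $\|\partial^2_{t,y}\Gamma_2^\kappa\tilde{\psi}\|_{L^2_{dy}}$ by \eqref{eq:ener2boot} and $\|\Gamma_2^\kappa(P_cG)\|_{L^2_{dy}}$ by the nonlinear estimates, and then, by induction on $\kappa$, obtains the weighted bound $\|\langle y\rangle^{-2}\Gamma_2^\kappa\tilde{\psi}\|_{L^2_{dy}}\lesssim\eps\langle t\rangle^{10^\kappa\nu}$ (this is \eqref{eq:technical5}). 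Pigeonholing on the region $|y|=O(1)$ then produces a point $y_*=O(1)$ with $|\Gamma_2^\kappa\tilde{\psi}(t,y_*)|\lesssim\eps\langle t\rangle^{10^\kappa\nu}$ (this is \eqref{eq:technical01}), and only from such an anchor does the FTC/Cauchy--Schwarz step close. Without a substitute for this weighted $L^2$ anchor estimate, your proposal does not prove the lemma.
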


\begin{proof}
This follows immediately from the embedding $H^1(\R)\subset L^{\infty}(\R)$(without the factor $\langle y\rangle^{\frac{1}{2}}$), provided $|\beta|>0$. Hence assume now $|\beta| = 0$. Then the estimate follows from the fundamental theorem of calculus and Cauchy-Schwarz, provided we get a bound of the form 
\begin{equation}\label{eq:technical01}
\big|\Gamma_2^{\kappa}\tilde{\psi}(t, y_*)\big|\lesssim \eps\langle t\rangle^{10^{\kappa}\nu}
\end{equation}
for some $y_* = O(1)$. For this, consider the wave equation satisfied by $\Gamma_2^{\kappa}\tilde{\psi}$, which is 
\begin{align*}
-\partial_t^{2}\Gamma_2^{\kappa}\tilde{\psi} + \partial_y^2\Gamma_2^{\kappa}\tilde{\psi}+ \frac{1}{2}\frac{3+\frac{y^2}{2}}{(1+y^2)^2}\Gamma_2^{\kappa}\tilde{\psi} + l.o.t. = \Gamma_2^{\kappa}(P_cG),
\end{align*}
where we have (pointwise bound)
\[
|l.o.t.|\lesssim \sum_{0\leq\kappa_1<\kappa}\big|\langle y\rangle^{-2}\Gamma_2^{\kappa_1}\tilde{\psi}\big| + |P_c G|.
\]
By a simple calculation, we have 
\[
\big\|\Gamma_2^{\kappa}(P_cG)\big\|_{L^2_{dy}}\lesssim \eps\sum_{\tilde{\kappa}\leq \kappa}\big\|\langle y\rangle^{-2}\Gamma_2^{\tilde{\kappa}}\tilde{\phi} \big\|_{L^2_{dy}} + \eps\langle t\rangle^{10^{\kappa}\nu}.
\]
Also, in view of the bootstrap assumption \eqref{eq:ener2boot}, we have
$$\big\|\pr^2_{t,y}\Gamma_2^{\kappa}\tpsi\big\|_{L^2_{dy}}\lesssim  \eps\langle t\rangle^{10^{\kappa}\nu}.$$
Thus, using the previous bound and the wave equation satisfied by $\Ga_2^\kappa\tpsi$, we deduce
\begin{eqnarray*}
\big\|\langle y\rangle^{-2}\Gamma_2^{\kappa}\tilde{\psi}\big\|_{L^2_{dy}} &\lesssim &  \big\|\frac{1}{2}\frac{3+\frac{y^2}{2}}{(1+y^2)^2}\Gamma_2^{\kappa}\tilde{\psi} \big\|_{L^2_{dy}}\\
&\lesssim & \big\|\pr^2_{t,y}\Gamma_2^{\kappa}\tpsi\big\|_{L^2_{dy}}+\big\|\Gamma_2^{\kappa}(P_cG)\big\|_{L^2_{dy}}+l.o.t\\
&\lesssim & \eps\sum_{\tilde{\kappa}\leq \kappa}\big\|\langle y\rangle^{-2}\Gamma_2^{\tilde{\kappa}}\tilde{\phi} \big\|_{L^2_{dy}} +\eps\langle t\rangle^{10^{\kappa}\nu}.
\end{eqnarray*}
Using induction on $\kappa$, we obtain the bound 
\begin{equation}\label{eq:technical5}
\big\|\langle y\rangle^{-2}\Gamma_2^{\kappa}\tilde{\psi}\big\|_{L^2_{dy}}\lesssim  \eps\langle t\rangle^{10^{\kappa}\nu}.
\end{equation}
This implies the existence of a $y_*$ as in \eqref{eq:technical01}, proving the lemma.
\end{proof}


\end{document}